\numberwithin{equation}{section}
\numberwithin{figure}{section}
\theoremstyle{plain}
\newtheorem{thm}{\protect\theoremname}
\newtheorem{crl}{Corollary}
\newtheorem{dfn}{Definition}
\newtheorem{prp}{Proposition}
\newtheorem{lemma}{Lemma}
\newtheorem{question}{Question}
\providecommand{\theoremname}{Theorem}
\author{J. Cancino-Manr\'iquez}
\address[Jonathan Cancino-Manr\'{i}quez]{Departamento de Matem\'aticas, Facultad de Ciencias, Universidad Nacional Aut\'onoma de M\'exico. Circuito Exterior S/N, Ciudad Universi\-ta\-ria, CDMX, 04510, M\'exico}
\email{jcancino@ciencias.unam.mx,mhacajoh@gmail.com}
\thanks{
\emph{Acknowledgments.} This work was partly supported by
 Universidad Nacional Aut\'onoma de M\'exico Postdoctoral
 Program (POSDOC). The author is also thankful to Osvaldo Guzm\'an for listening earlier versions of these results.
}
\subjclass[2000]{03E35, 03E17, 54A35}
\keywords{Irresolvable space, cardinal invariants, forcing, filter, ideal.}
\begin{document}

\title{$Con(\mathfrak{r}_\mathsf{nwd}<\mathfrak{irr})$}
\begin{abstract}
We prove the consistency of the inequality $\mathfrak{r}_{\mathsf{nwd}}<\mathfrak{irr}$, which in turn implies the consistency of 
$\mathfrak{r}_\mathsf{nwd}<\mathfrak{i}$ and $\mathfrak{r}_{\mathsf{scatt}}<\mathfrak{irr}$. This answers one question from \cite{balzar_hrusak_hernandez} and one question from \cite{cancino_irresolvable_1}. We also prove the consistency of the inequality $\mathfrak{r}_\mathbb{Q}<\mathfrak{u}_\mathbb{Q}$.
\end{abstract}

\maketitle

\section{Introduction}

Recall that a topological space $(X,\tau)$ is resolvable if it has a dense co-dense subset; otherwise, the space $(X,\tau)$ is said to be irresolvable. The rationals are the typical example of a resolvable space, as it is easy to construct a dense codense subset of $\mathbb{Q}$. However, the construction of a countable irresolvable space needs some strength of the Axiom of Choice: in \cite{resolvability_and_extraresolvability}, it has been proved that in the Solovay model there is no countable irresolvable space. The argument goes roughly as follows: the existence of an countable irresolvable space implies the existence of a countable irresolvable space such that any open subset is irresolvable with the subspace topology(such spaces are called strongly irresolvable). On the other hand, the ideal of nowhere dense subset of a countable strongly 
irresolvable space is saturated, and therefore, it does not have the Baire property. On the other hand, in the Solovay model every subset of the reals has the Baire property.

The irresolvability number was introduced by M. Scheepers in \cite{scheepers_games}, and it is defined as 
\begin{equation*}
    \mathfrak{irr}=\min\{\pi w((\omega,\tau)):(\omega,\tau)\text{ is an irresolvable  $T_3$ topological space}\}
\end{equation*}

Where $\pi w$ denotes the $\pi$-weight of the space. It is well known that $\mathfrak{r}\leq\mathfrak{irr}\leq\mathfrak{i}$ (see \cite{scheepers_games}). On the other hand, in \cite{cancino_irresolvable_1} it has been proved that $\mathfrak{d}\leq\mathfrak{r}_{\mathsf{scattered}}\leq\mathfrak{irr}$ follows from $\mathsf{ZFC}$, where $\mathfrak{r}_{\mathsf{scattered}}$ is the reaping number of the boolean algebra $\mathcal{P}(\mathbb{Q})/\mathsf{scattered}$, and $\mathsf{scattered}$ is the ideal of scattered subsets of $\mathbb{Q}$. Also, the parametrized diamond principle $\diamondsuit(\mathfrak{r}_{\mathsf{scattered}})$ implies the existence of a countable irresolvable space of weight $\omega_1$. Among other questions, it was asked if the inequality $\mathfrak{r}_{\mathsf{scattered}}<\mathfrak{irr}$ is relatively consistent with $\mathsf{ZFC}$. Interestingly, there is a parallelism with results from \cite{balzar_hrusak_hernandez}. Define $\mathfrak{r}_\mathbb{Q}$ as the minimal cardinality of a dense reaping family, that is, a family $\mathcal{R}$ of dense subsets of the rationals such that for any dense $A\subseteq\mathbb{Q}$, there is $B\in\mathcal{R}$ such that either $B\setminus A$ is not dense or $A\cap B$ is not dense. In \cite{balzar_hrusak_hernandez}, the authors proved that $\mathsf{cof}(\mathcal{M})\leq\mathfrak{r}_{\mathbb{Q}}\leq\mathfrak{i}$, and $\diamondsuit(\mathfrak{r}_{\mathbb{Q}})$ implies $\mathfrak{i}=\omega_1$. The question of whether the inequality $\mathfrak{r}_{\mathbb{Q}}<\mathfrak{i}$ is consistent remained open. In this article we give answer to both questions. The main theorem of this paper is the following:

\begin{thm}
It is relatively consistent with $\mathsf{ZFC}$ that $\mathfrak{r}_{\mathsf{scattered}}=\mathfrak{r}_{\mathbb{Q}}=\omega_1<\omega_2=\mathfrak{irr}=\mathfrak{i}=2^{\omega}$.
\end{thm}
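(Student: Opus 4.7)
The plan is to perform a countable support iteration of proper forcings $\langle P_\alpha, \dot{Q}_\alpha : \alpha<\omega_2\rangle$ over a ground model $V$ satisfying $\mathsf{CH}$. The guiding idea is to freeze a ground-model pair of small reaping families, and along the way generically destroy every potential counterexample to $\mathfrak{irr}=\omega_2$. Using $\mathsf{CH}$, I would fix once and for all a family $\mathcal{R}_{\mathsf{scatt}}$ of size $\omega_1$ witnessing $\mathfrak{r}_{\mathsf{scatt}}=\omega_1$ in $V$, and analogously $\mathcal{R}_{\mathbb{Q}}$ witnessing $\mathfrak{r}_{\mathbb{Q}}=\omega_1$ in $V$; the intention is that both families remain reaping in the final model.

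Via a standard bookkeeping, enumerate at stage $\alpha$ a name $\dot{\tau}_\alpha$ for a $T_3$ topology on $\omega$ of $\pi$-weight strictly below $\omega_2$, and let $\dot{Q}_\alpha$ be a proper forcing that generically adds a dense, co-dense subset of $(\omega,\dot{\tau}_\alpha)$. A natural candidate is a Mathias-type poset: conditions are pairs $(s,\mathcal{F})$ where $s\in 2^{<\omega}$ codes an initial segment of the characteristic function of the resolving set and $\mathcal{F}$ is a finite collection of nonempty $\dot{\tau}_\alpha$-open sets to be met on both sides, with extensions regulated by a dense-positive filter on $\omega$ chosen so that the forcing is proper and satisfies a strong combinatorial smallness property.

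The main obstacle, and where the real work of the paper must lie, is designing $\dot{Q}_\alpha$ so that it does not destroy the reaping property of $\mathcal{R}_{\mathsf{scatt}}$ and $\mathcal{R}_{\mathbb{Q}}$, and then proving a preservation theorem showing that these reaping properties survive the full countable support iteration. The latter requires isolating an iterable combinatorial property of $\dot{Q}_\alpha$ (for instance, a Laver-style property, or a two-player-game characterization of the non-destruction of a dense-reaping family), in the style of Shelah's preservation framework for proper forcing, and verifying that the candidate $\dot{Q}_\alpha$ satisfies it at every stage, including limit stages.

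Granting these preservation facts, the conclusion is routine: in $V[G_{\omega_2}]$ one has $2^\omega=\omega_2$ by the iteration length and properness; the preserved families yield $\mathfrak{r}_{\mathsf{scatt}},\mathfrak{r}_{\mathbb{Q}}\leq\omega_1$; and a standard reflection argument shows every $T_3$ topology on $\omega$ of $\pi$-weight $<\omega_2$ in $V[G_{\omega_2}]$ already appears in some intermediate extension $V[G_\alpha]$ with $\alpha<\omega_2$, so is resolved at a later stage by the bookkeeping. Hence $\mathfrak{irr}\geq\omega_2$, and combined with the known inequalities $\mathfrak{irr}\leq\mathfrak{i}\leq 2^\omega$ we obtain $\mathfrak{irr}=\mathfrak{i}=\omega_2$, completing the consistency proof.
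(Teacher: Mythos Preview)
Your high-level outline (countable support iteration, bookkeeping to resolve every small-$\pi$-weight irresolvable space, preservation of a ground-model witness for small $\mathfrak{r}_{\mathbb{Q}}$) is correct, but the proposal has a genuine gap at the two places where you write ``the real work of the paper must lie,'' and the approach you sketch there does not match what actually works.

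First, the single-step forcing is not a Mathias-type poset. The paper uses Shelah's forcing $\mathbb{Q}(\mathscr{I})$ for the saturated ideal $\mathscr{I}=\mathsf{nwd}(\tau)$ of a \emph{strongly} irresolvable space $(\omega,\tau)$: conditions are pairs $(H_p,\vec{E}_p)$ where $\vec{E}_p$ is an $\mathscr{I}$-partition and $H_p$ is a sequence of functions coding how the generic real behaves on each block. This forcing has the Sacks property (not merely a Laver property), and its analysis goes through a game $\mathscr{G}(\mathscr{I})$ in which Player~I has no winning strategy precisely because $\mathscr{I}$ is saturated.

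Second, and this is the crucial missing idea: you cannot simply fix one reaping family $\mathcal{R}_{\mathbb{Q}}$ of size $\omega_1$ in $V$ and preserve it throughout. There is no reason to expect that a forcing which adds a dense co-dense set for an arbitrary irresolvable topology will preserve a fixed rational reaping family; indeed $\mathbb{Q}(\mathscr{I})$ will typically destroy the maximality of some selective rational filters. The paper's device, adapted from Goldstern--Shelah and Shelah's $\mathfrak{i}<\mathfrak{u}$ argument, is to build in $V$ a tree-indexed family $\mathscr{F}$ of $\omega_2$ many selective rational filters with the property that any $\omega_1$-subset contains an $\omega_1$-subset supporting an almost disjoint system (Lemma~\ref{filters_family}). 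The preservation theorem (Theorem~\ref{main_section_theorem}) says that $\mathbb{Q}(\mathscr{I})$ preserves a selective rational filter $\mathcal{F}$ \emph{provided} no $\mathscr{I}$-to-one image of $\mathscr{I}^*$ is coherent with $\mathcal{F}$; the AD property of $\mathscr{F}$ together with saturation of $\mathscr{I}$ then guarantees that at each step only countably many filters in $\mathscr{F}$ fail this hypothesis. Over $\omega_2$ steps at most $\omega_1$ filters are killed, so some (in fact $\omega_2$ many) survive as maximal selective rational filters in $V[G_{\omega_2}]$, and a base for any one of them is the desired $\omega_1$-sized $Dense(\mathbb{Q})^+$-reaping family. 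Your scheme of freezing a single $\omega_1$-sized family gives no mechanism to rule out that the iteration eventually splits every member of it.
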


The previous result can be easily adapted to prove that $\mathfrak{r}_\mathbb{Q}<\mathfrak{u}_\mathbb{Q}$ is consistent:

\begin{thm}
The inequality $\mathfrak{r}_\mathbb{Q}<\mathfrak{u}_{\mathbb{Q}}$ is relatively consistent with $\mathsf{ZFC}$. 
\end{thm}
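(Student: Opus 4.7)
The plan is to mimic the construction of the previous theorem, with the only substantive modification being that the ``irresolvability-killing'' iterand at each stage is replaced by an ``ultrafilter-base-splitting'' iterand tailored to the Boolean algebra $\mathcal{P}(\mathbb{Q})/\mathsf{nwd}$. Starting from a ground model $V\models\mathsf{CH}$, I would fix a dense reaping family $\mathcal{R}$ of size $\omega_1$ and set up an $\omega_2$-iteration with the same support and bookkeeping as in the previous theorem. At stage $\alpha<\omega_2$ the bookkeeping produces a name $\dot{\mathcal{B}}_\alpha$ for a candidate ultrafilter base of size $\omega_1$ on $\mathcal{P}(\mathbb{Q})/\mathsf{nwd}$, and the role of stage $\alpha$ is to ensure that $\dot{\mathcal{B}}_\alpha$ is not a base for any ultrafilter in the final extension.

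The iterand $\dot{\mathbb{Q}}_\alpha$ should add a dense set $\dot{D}_\alpha\subseteq\mathbb{Q}$ that splits every $B\in\dot{\mathcal{B}}_\alpha$, in the sense that both $B\cap\dot{D}_\alpha$ and $B\setminus\dot{D}_\alpha$ are dense in $\mathbb{Q}$; any such $\dot{D}_\alpha$ is undecided by $\dot{\mathcal{B}}_\alpha$ modulo $\mathsf{nwd}$, so $\dot{\mathcal{B}}_\alpha$ cannot be an ultrafilter base once $\dot{D}_\alpha$ has been added. A natural candidate for $\dot{\mathbb{Q}}_\alpha$ is a Mathias-type poset whose conditions are finite approximations to $\dot{D}_\alpha$ together with a promise consisting of finitely many $B\in\dot{\mathcal{B}}_\alpha$, engineered so that genericity forces both $B\cap\dot{D}_\alpha$ and $B\setminus\dot{D}_\alpha$ to be dense in $\mathbb{Q}$ for every $B$ in the promise. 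A standard reflection argument using $2^{\omega}=\omega_2$ throughout the iteration then yields $\mathfrak{u}_\mathbb{Q}=\omega_2$ in $V[G_{\omega_2}]$, because every filter base of size $<\omega_2$ appears at some intermediate stage and is subsequently split.

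The main obstacle, and the only step that genuinely requires work, is the preservation of $\mathcal{R}$ as a dense reaping family throughout the iteration. Concretely, one needs a preservation lemma asserting that the new iterand is $\mathcal{R}$-preserving, in the sense that any dense $A\subseteq\mathbb{Q}$ added by $\dot{\mathbb{Q}}_\alpha$ is still reaped by some element of $\mathcal{R}$, and that this preservation property is closed under the iteration machinery used in the previous theorem. Such a lemma is precisely what is proved there; the crucial observation is that its proof uses only the fact that the generic object is a dense subset of $\mathbb{Q}$ plus the genericity of its finite approximations, and not the specific irresolvability combinatorics. Once the preservation lemma has been re-proved for the splitting iterand, the final model satisfies $\omega_1=\mathfrak{r}_\mathbb{Q}<\mathfrak{u}_\mathbb{Q}=\omega_2$, as required.
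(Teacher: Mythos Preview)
Your approach diverges substantially from the paper's, and the divergence contains a real gap.

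The paper does \emph{not} introduce a new ``base-splitting'' iterand. Instead it proves the inequality $\mathfrak{irr}_2\leq\mathfrak{u}_\mathbb{Q}$ (Proposition~\ref{rational_filter_to_irresolvable_space}): from any rational filter $\mathcal{U}$ with a base of size $\kappa$ one manufactures a $T_2$ irresolvable space $(\mathbb{Q},\tau(\mathcal{U}))$ of $\pi$-weight at most $\kappa$, by declaring $\{B\cap A:B\in\mathcal{B},\,A\in\mathcal{U}\}$ to be a base. Since Lemmas~\ref{strongly_irr_subspace}, \ref{saturation_of_nwd_tau} and \ref{destroy_irresolvability} hold verbatim for $T_2$ spaces, the very iteration of Theorem~\ref{main_theorem} (with bookkeeping over $T_2$ rather than $T_3$ irresolvable spaces) already forces $\mathfrak{irr}_2=\omega_2$, and hence $\mathfrak{u}_\mathbb{Q}=\omega_2$, while the preservation of $\mathfrak{r}_\mathbb{Q}=\omega_1$ is literally the same argument as before. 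No new forcing notion and no new preservation lemma are required.

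Your proposed route, by contrast, has a genuine hole at the preservation step. The iterands $\mathbb{Q}(\mathscr{I})$ used in the paper are Sacks-like: the preservation of the reaping family hinges on the Sacks property (Proposition~\ref{sacks_property}), on the saturation of $\mathscr{I}$, and on the delicate game analysis culminating in Theorem~\ref{main_section_theorem}; the iteration theorem (Theorem~\ref{preservation_theorem}) explicitly requires $\omega^\omega$-bounding. A ``Mathias-type poset with finite stems and finite promises from $\dot{\mathcal{B}}_\alpha$'' is $\sigma$-centered and will add Cohen-like reals; it is neither $\omega^\omega$-bounding nor Sacks, so Theorem~\ref{preservation_theorem} does not apply, and in fact the generic dense set it adds will typically not be reaped by anything in the ground model. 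Your assertion that the paper's preservation proof ``uses only the fact that the generic object is a dense subset of $\mathbb{Q}$ plus the genericity of its finite approximations'' is simply incorrect: that proof is tightly coupled to the tree-fusion structure of $\mathbb{Q}(\mathscr{I})$ and to the near-coherence hypothesis linking $\mathscr{I}^*$ and the selective rational filters in $\mathscr{F}$. If you want a direct attack on $\mathfrak{u}_\mathbb{Q}$, you would need to design a proper $\omega^\omega$-bounding forcing that splits a given base while preserving a selective rational filter---essentially rebuilding Section~4 from scratch---whereas the paper sidesteps all of this via the one-line reduction $\mathfrak{irr}_2\leq\mathfrak{u}_\mathbb{Q}$.
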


The strategy in proving Theorem 1 is a mixture of the consistency proofs of the inequalities $\mathfrak{r}<\mathfrak{u}$ and $\mathfrak{i}<\mathfrak{u}$. The forcing we use is that of \cite{shelah_i_less_u} used to prove the consistency of $\mathfrak{i}<\mathfrak{u}$. We perform a countable support iteration of length $\omega_2$, and in each step of the iteration, we destroy a countable irresolvable space by adding a dense co-dense subset of $\omega$. Then we take care that any countable irresolvable space that appears along the iteration is destroyed at some step of the iteration. Since once a dense codense subset of $\omega$ has been added to a topological space it can not be taken away, when we destroy the irresolvability of a topological space it can not be restored as an irresolvable space unless we modify the base of the topology. In this way, if we destroy the irresolvabiblity of a topological space, it will remain as a resolvable space after the full iteration, making sure that all countable topological spaces of small $\pi$-weight are resolvable. On the other hand, we make sure that at any step of the iteration we destroy a small quantity of maximal filters on $Dense(\mathbb{Q})$, thus preserving the dense subsets of the ground model as a $Dense(\mathbb{Q})$-reaping family after the iteration. The paper ends with a section discussing the existence of rational $p$-filters and rational $q$-filters.

\section{Preliminaries and basic notions.}

The family of dense subsets of the rationals is denoted by $Dense(\mathbb{Q})$, while the ideal of nowhere dense subsets of $\mathbb{Q}$ is denoted as $\mathsf{nwd}$. The ideal of scattered subsets of the rationals is denoted by $\mathsf{scattered}$.

\begin{dfn}[See \cite{balzar_hrusak_hernandez}]
A family $\mathcal{R}\subseteq Dense(\mathbb{Q})$ is $Dense(\mathbb{Q})$-reaping if for any set $X\in Dense(\mathbb{Q})$, there is $Y\in\mathcal{R}$ such that either:
\begin{enumerate}
    \item $Y\setminus X\notin Dense(\mathbb{Q})$.
    \item $X\cap Y\notin Dense(\mathbb{Q})$.
\end{enumerate}
The cardinal invariant $\mathfrak{r}_\mathbb{Q}$ is defined as the minimal possible cardinality of a $Dense(\mathbb{Q})$-reaping family.
\end{dfn}

\begin{dfn}
A family $\mathcal{R}\subseteq Dense(\mathbb{Q})$ is $Dense(\mathbb{Q})^+$-reaping if for any $X\in Dense(\mathbb{Q})$, there is $Y\in\mathcal{R}$ such that either:
\begin{enumerate}
    \item $Y\subseteq^*X$.
    \item $X\cap Y\notin Dense(\mathbb{Q})$.
\end{enumerate}
The cardinal invariant $\mathfrak{r}_\mathbb{Q}^+$ is defined as the minimal cardinality of a $Dense(\mathbb{Q})^+$-reaping family.
\end{dfn}

Obviously, $\mathfrak{r}_{\mathbb{Q}}\leq\mathfrak{r}_\mathbb{Q}^+$ holds. Note that there exist $Dense(\mathbb{Q})^+$-reaping families which are not trivial: let $\mathcal{F}\subseteq Dense(\mathbb{Q})$ be a maximal filter, that is, for a dense set $X\subseteq \mathbb{Q}$, $X\in\mathcal{F}$ if and only if for all $Y\in\mathcal{F}$, $X\cap Y\in Dense(\mathbb{Q})$. An application of the Axiom of Choice shows that such maximal filters exist. Clearly, any such filter is a $Dense(\mathbb{Q})^+$-reaping family (or any base for such a filter). In the following sections we will see that consistently these families can have cardinality smaller than the continuum. We don't know if $\mathfrak{r}_\mathbb{Q}=\mathfrak{r}_\mathbb{Q}^+$ holds.

\begin{dfn}
$\mathcal{P}(\mathbb{Q})/\mathsf{nwd}$ denotes the boolean algebra of the power set of $\mathbb{Q}$ modulo the ideal of nowhere dense subsets of $\mathbb{Q}$, with the order given by set inclusion modulo $\mathsf{nwd}$. Similarly, $\mathcal{P}(\mathbb{Q})/\mathsf{scattered}$ denotes the quotient modulo the ideal $\mathsf{scattered}$, with the order given by set inclusion modulo $\mathsf{scattered}$.

We denote by $\mathfrak{r}_{\mathsf{nwd}}$ the reaping number of the quotient algebra $\mathcal{P}(\mathbb{Q})/\mathsf{nwd}$, that is, $$\mathfrak{r}_{\mathsf{nwd}}=\mathfrak{r}(\mathcal{P}(\mathbb{Q})/\mathsf{nwd})$$

The reaping number of the boolean algebra $\mathcal{P}(\mathbb{Q})/\mathsf{scattered}$ is denoted by $\mathfrak{r}_{\mathsf{scattered}}$, $$\mathfrak{r}_{\mathsf{scatterd}}=\mathfrak{r}(\mathcal{P}(\mathbb{Q})/\mathsf{scattered})$$
\end{dfn}

\begin{prp}[See \cite{balzar_hrusak_hernandez}]
$\mathfrak{r}_{\mathbb{Q}}=\mathfrak{r}_{\mathsf{nwd}}$.    
\end{prp}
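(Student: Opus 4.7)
The plan is to establish the equality by proving both inequalities separately, each by an explicit transformation of a reaping family using only the countable base of $\mathbb{Q}$.

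For the direction $\mathfrak{r}_{\mathbb{Q}} \leq \mathfrak{r}_{\mathsf{nwd}}$, I would start with a reaping family $\mathcal{R}$ for $\mathcal{P}(\mathbb{Q})/\mathsf{nwd}$ and manufacture a $Dense(\mathbb{Q})$-reaping family of the same cardinality by ``completing'' each element to a dense subset of $\mathbb{Q}$. Concretely, for each $R\in\mathcal{R}$, set $U_R=\mathrm{int}(\overline{R})$ (the largest open set in which $R$ is dense) and define $R^{\ast}=R\cup(\mathbb{Q}\setminus\overline{U_R})$. A brief topological check shows $R^{\ast}$ is dense in $\mathbb{Q}$: on $U_R$ it is dense because $R$ is, and outside $\overline{U_R}$ it already contains every rational. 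To verify the reaping I fix a dense $X\subseteq\mathbb{Q}$ and select $R$ with $R\cap X\in\mathsf{nwd}$ or $R\setminus X\in\mathsf{nwd}$; the decisive observation is that the added piece $\mathbb{Q}\setminus\overline{U_R}$ is disjoint from $U_R$, while $\overline{R\cap X}$ (resp.\ $\overline{R\setminus X}$) has empty interior and hence cannot cover the nonempty open set $U_R$. Any point of $U_R$ outside that closure is missed by $R^{\ast}\cap X$ (resp.\ $R^{\ast}\setminus X$), so the corresponding set is not dense.

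For the direction $\mathfrak{r}_{\mathsf{nwd}} \leq \mathfrak{r}_{\mathbb{Q}}$, I would take a $Dense(\mathbb{Q})$-reaping family $\mathcal{R}$ and enlarge it by intersecting with the members of a fixed countable base $\mathcal{B}$ of $\mathbb{Q}$: $\mathcal{R}'=\{R\cap I:R\in\mathcal{R},\,I\in\mathcal{B}\}$. Each $R\cap I$ is dense in $I$, hence a non-zero element of $\mathcal{P}(\mathbb{Q})/\mathsf{nwd}$, and $|\mathcal{R}'|=|\mathcal{R}|$. Given $X\subseteq\mathbb{Q}$ with both $X$ and $\mathbb{Q}\setminus X$ non-nowhere-dense (the only nontrivial case), I split into subcases. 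If $X$ or $\mathbb{Q}\setminus X$ is dense in $\mathbb{Q}$, I apply the hypothesis to the dense one, take a nonempty open $V$ realizing the ``not dense'' conclusion, and pick a basic open $I\in\mathcal{B}$ with $I\subseteq V$; then the relevant one of $(R\cap I)\cap X$ or $(R\cap I)\setminus X$ is outright empty, and hence nowhere dense. Otherwise neither $X$ nor its complement is dense in $\mathbb{Q}$, which forces $\mathrm{int}(X)\neq\emptyset$; picking $I\in\mathcal{B}$ with $I\subseteq\mathrm{int}(X)$ and any $R\in\mathcal{R}$ yields $(R\cap I)\setminus X=\emptyset$.

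The main conceptual point is the mismatch between ``not dense'' (the weak condition built into $\mathfrak{r}_{\mathbb{Q}}$) and ``nowhere dense'' (used in the Boolean quotient). The first direction closes this gap by expanding each $R$ on the region where it was absent, and the second closes it by shrinking each $R$ to a basic open set, where ``not dense'' is automatically upgraded to ``nowhere dense''. Neither direction should present a serious obstacle; both arguments rely solely on elementary topology of $\mathbb{Q}$ and the fact that it has a countable base.
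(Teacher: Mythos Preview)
Your argument is correct in both directions. The paper does not supply a proof of this proposition at all; it simply cites \cite{balzar_hrusak_hernandez}, so there is nothing to compare against beyond noting that your two-step reduction (completing each $R$ to a dense set via $R\cup(\mathbb{Q}\setminus\overline{U_R})$ for one inequality, and localizing to basic open sets $R\cap I$ for the other) is a standard and clean way to bridge the ``not dense'' versus ``nowhere dense'' gap. One minor remark: in the second direction, when you assert $|\mathcal{R}'|=|\mathcal{R}|$, you are implicitly using that any $Dense(\mathbb{Q})$-reaping family is infinite (indeed uncountable), which is immediate but worth a word.
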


\begin{dfn}
Given two functions $f,g\in\omega^\omega$, define $f\leq^* g$ if the set $$\{n\in\omega:g(n)< f(n)\}$$ is finite.    

A family $\mathcal{D}\subseteq\omega^\omega$ is $\leq^*$-dominating if for any $f\in\omega^\omega$, there is $g\in\mathcal{D}$ such that $f\leq^* g$.

The dominating number $\mathfrak{d}$ is the minimal possible cardinality of a $\leq^*$-dominating family.
\end{dfn}

\begin{prp}[See \cite{balzar_hrusak_hernandez}]
$\mathfrak{d}\leq\mathsf{cof}(\mathcal{M})\leq\mathfrak{r}_{\mathsf{nwd}}$.
\end{prp}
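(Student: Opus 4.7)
\emph{Plan.} The statement splits into two inequalities, which I would handle separately. The first, $\mathfrak{d}\leq\mathsf{cof}(\mathcal{M})$, is a standard entry in the Cicho\'n diagram and I would simply cite it (cf.\ Bartoszy\'nski--Judah): any cofinal family $\{M_\alpha\}\subseteq\mathcal{M}$ represented inside $\omega^\omega$ produces a dominating family, because each $F_\sigma$-meager subset of $\omega^\omega$ is contained in $\{f:\forall^\infty n\, f(n)\leq g(n)\}$ for some $g$, and the collection of these $g$'s must be dominating.

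For the substantive inequality $\mathsf{cof}(\mathcal{M})\leq\mathfrak{r}_{\mathsf{nwd}}$, I would first invoke Proposition~1 to replace $\mathfrak{r}_{\mathsf{nwd}}$ by $\mathfrak{r}_{\mathbb{Q}}$, so it suffices to show $\mathsf{cof}(\mathcal{M})\leq\mathfrak{r}_{\mathbb{Q}}$. Let $\mathcal{R}\subseteq Dense(\mathbb{Q})$ be a dense reaping family of minimal cardinality $\kappa=\mathfrak{r}_{\mathbb{Q}}$. I would then appeal to Bartoszy\'nski's combinatorial characterization of $\mathsf{cof}(\mathcal{M})$ as the minimal cardinality of a family of ``chopped reals'' $(x,\pi)$---a point $x\in 2^\omega$ together with an interval partition $\pi$ of $\omega$---which match every other chopped real (that is, for every $(y,\sigma)$ there is some $(x_\alpha,\pi_\alpha)$ in the family with $x_\alpha{\restriction} I=y{\restriction} I$ for infinitely many $I$ drawn from a common refinement of $\pi_\alpha$ and $\sigma$). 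The goal is to attach to each $Y\in\mathcal{R}$ a chopped real $(x_Y,\pi_Y)$ and verify that $\{(x_Y,\pi_Y):Y\in\mathcal{R}\}$ is matching.

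The construction proceeds by fixing once and for all a bijection between $\mathbb{Q}$ and a canonical set of finite binary words, so that a dense subset $Y\subseteq\mathbb{Q}$ translates into a ``selector'' choosing, for each block in a natural interval partition, a witnessing pattern; this selector is precisely a chopped real $(x_Y,\pi_Y)$. Conversely, given an arbitrary meager $M\subseteq 2^\omega$ one uses Bartoszy\'nski's theorem to produce a chopped real $(y,\sigma)$ coding $M$, and then translates $(y,\sigma)$ back into a dense set $A_{(y,\sigma)}\subseteq\mathbb{Q}$ consisting of those rationals whose codes agree with $y$ on initial segments drawn from $\sigma$. Applying the reaping property of $\mathcal{R}$ to $A_{(y,\sigma)}$ produces $Y\in\mathcal{R}$ whose associated $(x_Y,\pi_Y)$ matches $(y,\sigma)$, whence $M$ lies in the meager hull coded by $Y$.

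The main obstacle, and the step I expect to occupy most of the technical work, is designing the coding $Y\mapsto(x_Y,\pi_Y)$ and its inverse rigidly enough so that the two-sided dichotomy of dense reaping---either $Y\setminus A\notin Dense(\mathbb{Q})$ or $A\cap Y\notin Dense(\mathbb{Q})$---translates faithfully into the matching/non-matching dichotomy for chopped reals. If that translation refuses to go through cleanly in one direction, the fallback is to work with $\mathfrak{r}_{\mathbb{Q}}^+$ first (where a genuine $\subseteq^*$ relation is available) and then descend to $\mathfrak{r}_{\mathbb{Q}}$ using that $\mathcal{R}$ can be thinned to a base for a maximal filter plus finitely many ``coreaping'' witnesses.
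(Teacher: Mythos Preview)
The paper does not actually prove this proposition; it is stated with a bare citation to \cite{balzar_hrusak_hernandez} and no argument is given, so there is no in-house proof to compare against. Your treatment of $\mathfrak{d}\leq\mathsf{cof}(\mathcal{M})$ by citation is entirely appropriate and matches the paper's style.

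For $\mathsf{cof}(\mathcal{M})\leq\mathfrak{r}_{\mathbb{Q}}$, the chopped-real route is in the right spirit, but as written it is only a plan: you yourself flag that the crucial step---a coding $Y\mapsto(x_Y,\pi_Y)$ under which the two-sided reaping dichotomy becomes a matching statement for chopped reals---is not worked out, and that is precisely where all the content lies. Note in particular that dense reaping hands you \emph{either} $Y\setminus A\notin Dense(\mathbb{Q})$ \emph{or} $Y\cap A\notin Dense(\mathbb{Q})$, and you do not get to choose which; your coding must therefore turn \emph{both} outcomes into a matching witness, and you have not indicated how the second outcome (where $Y$ essentially avoids $A$ on a basic open) yields matching rather than anti-matching. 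Until that is resolved, the main line is incomplete.

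Your fallback contains a genuine error. You propose to establish the bound for $\mathfrak{r}_{\mathbb{Q}}^+$ and then ``descend to $\mathfrak{r}_{\mathbb{Q}}$'', but no such descent is available: the paper explicitly states just after Definition~2 that whether $\mathfrak{r}_{\mathbb{Q}}=\mathfrak{r}_{\mathbb{Q}}^+$ holds is open. The claimed thinning of a reaping family to ``a base for a maximal filter plus finitely many coreaping witnesses'' is not a valid move---a $Dense(\mathbb{Q})$-reaping family of size $\kappa$ need not generate, or be refinable to, a rational filter with a base of size $\kappa$. So if the direct coding does not go through, the fallback does not rescue the argument.
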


\begin{dfn}
Let $(X,\tau)$ be a regular topological space. We say that $(X,\tau)$ is irresolvable if the intersection of any two dense sets is non-empty.
\end{dfn}

\begin{dfn}
The irresolvability number $\mathfrak{irr}$ is defined as the minimal $\pi$-weight of a countable regular  irresolvable space,
\begin{equation*}
    \mathfrak{irr}=\min\{{\pi}w(\omega,\tau):(\omega,\tau)\text{ is a regular irresolvable space}\}
\end{equation*}
\end{dfn}

\begin{prp}[See \cite{balzar_hrusak_hernandez} and \cite{cancino_irresolvable_1}]\quad

\begin{enumerate}
    \item $\max\{\mathfrak{d},\mathfrak{r}\}\leq\mathfrak{r}_\mathsf{scatt}\leq\mathfrak{irr}\leq\mathfrak{i}$.   \item $\mathfrak{r}_{\mathsf{scatt}}\leq\mathfrak{r}_\mathsf{nwd}\leq\mathfrak{i}$.
\end{enumerate}

\end{prp}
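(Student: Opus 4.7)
The plan is to verify each of the six inequalities, reserving the most effort for $\mathfrak{r}_{\mathsf{scatt}} \leq \mathfrak{r}_{\mathsf{nwd}}$, which I expect to be the most delicate step.

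I would start with $\mathfrak{r} \leq \mathfrak{r}_{\mathsf{scatt}}$ by partitioning $\mathbb{Q} = \bigsqcup_{n \in \omega} Q_n$ into countably many pairwise disjoint dense subsets, so that $Q_A := \bigcup_{n \in A} Q_n$ is non-scattered whenever $A$ is nonempty, and then converting an $\mathsf{scatt}$-reaping family $\mathcal{R}$ into a reaping family on $\omega$ via the map $R \mapsto \{n \in \omega : R \cap Q_n \notin \mathsf{scatt}\}$, after refining $\mathcal{R}$ so that each image is infinite. For $\mathfrak{d} \leq \mathfrak{r}_{\mathsf{scatt}}$, I would enumerate a basis $(I_n)$ of $\mathbb{Q}$ and associate to each $R \in \mathcal{R}$ a function $f_R \in \omega^\omega$ that records, for each $n$, the least stage at which $R \cap I_n$ exhibits a designated non-scattered configuration; any undominated $g$ could then be coded into a scattered target set whose reaping witness would violate the definition of $f_R$.

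For $\mathfrak{r}_{\mathsf{scatt}} \leq \mathfrak{irr}$, I would take a countable regular irresolvable space $(\omega, \tau)$ of $\pi$-weight $\kappa$ and a $\pi$-base $\mathcal{B}$ of size $\kappa$; after fixing an identification of $\omega$ with a dense subset of $\mathbb{Q}$ such that each $B \in \mathcal{B}$ is non-scattered as a subset of $\mathbb{Q}$, I would argue that $\mathcal{B}$ is $\mathsf{scatt}$-reaping, since otherwise some non-scattered $A$ would make both $A$ and $\omega \setminus A$ be $\tau$-dense, contradicting irresolvability. The inequality $\mathfrak{irr} \leq \mathfrak{i}$ is standard: any maximal independent family of size $\mathfrak{i}$ generates a regular irresolvable topology on $\omega$ of $\pi$-weight $\mathfrak{i}$. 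The bound $\mathfrak{r}_{\mathsf{nwd}} \leq \mathfrak{i}$ of part (2) then follows from the earlier proposition $\mathfrak{r}_{\mathbb{Q}} = \mathfrak{r}_{\mathsf{nwd}}$ together with $\mathfrak{r}_{\mathbb{Q}} \leq \mathfrak{i}$ from \cite{balzar_hrusak_hernandez}.

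The remaining inequality $\mathfrak{r}_{\mathsf{scatt}} \leq \mathfrak{r}_{\mathsf{nwd}}$ is where I expect the main difficulty to concentrate. An $\mathsf{nwd}$-reaping family $\mathcal{R}$ of dense subsets of $\mathbb{Q}$ reaps every somewhere-dense set in the $\mathsf{scatt}$ sense as well (since $\mathsf{scatt} \subseteq \mathsf{nwd}$), but it can fail against non-scattered \emph{nowhere-dense} sets $A$, such as the dense-in-itself copies of $\mathbb{Q}$ living inside a Cantor subset of $\mathbb{Q}$: for such $A$, every dense $R \in \mathcal{R}$ has $R \setminus A$ dense (hence non-scattered), so one must supply a family member that is almost contained in $A$ modulo $\mathsf{scatt}$. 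My plan is to invoke Sierpi\'nski's theorem (every countable metric space without isolated points is homeomorphic to $\mathbb{Q}$) and to enlarge $\mathcal{R}$ by transferring it through a well-chosen family of homeomorphisms from dense-in-itself subsets of $\mathbb{Q}$ onto $\mathbb{Q}$, thereby supplying the needed witnesses inside each non-scattered nowhere-dense set. Keeping the enlargement at cardinality $\mathfrak{r}_{\mathsf{nwd}}$ is the crux, and I expect it to rest on a diagonal argument exploiting the single homeomorphism type of countable dense-in-itself metric spaces.
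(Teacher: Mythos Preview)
The paper does not prove this proposition at all; it is quoted from \cite{balzar_hrusak_hernandez} and \cite{cancino_irresolvable_1} as background, so there is no ``paper's own proof'' to compare against. That said, your proposal contains a genuine error that would make the argument for $\mathfrak{r}_{\mathsf{scatt}}\leq\mathfrak{r}_{\mathsf{nwd}}$ collapse.

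You write that an $\mathsf{nwd}$-reaping family $\mathcal{R}$ ``reaps every somewhere-dense set in the $\mathsf{scatt}$ sense as well (since $\mathsf{scatt}\subseteq\mathsf{nwd}$)''. The inclusion is correct, but it points the wrong way for your purpose. From $\mathsf{scatt}\subseteq\mathsf{nwd}$ one gets that \emph{scatt-reaping implies nwd-reaping}, not the converse: if some $R\in\mathcal{R}$ has $R\cap A\in\mathsf{nwd}$ or $R\setminus A\in\mathsf{nwd}$, there is no reason either piece should be scattered, since a nowhere dense set can perfectly well be dense-in-itself. Concretely, if $A$ is the union of an open interval with a dense-in-itself nowhere dense set $D$, then a dense $R$ that nwd-reaps $A$ by making $R\cap A$ nowhere dense may still have $R\cap A\supseteq R\cap D$ non-scattered, and similarly for $R\setminus A$. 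So the somewhere-dense case is not handled at all, and your whole reduction to ``the nowhere dense non-scattered targets are the hard ones'' is based on a false premise.

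This means the difficulty you isolate at the end is in fact present for \emph{every} non-scattered target, and the vague plan of ``transferring $\mathcal{R}$ through a well-chosen family of homeomorphisms'' now carries the entire weight of the inequality without any reduction having taken place. As stated, it is not a proof sketch but a hope: you have not indicated how to choose $\leq\mathfrak{r}_{\mathsf{nwd}}$ many homeomorphisms so that every dense-in-itself $K\subseteq\mathbb{Q}$ is handled, nor why the transferred family would scatt-reap rather than merely nwd-reap inside each copy (the same ideal-inclusion issue recurs inside $K$). The remaining sketches for $\mathfrak{r},\mathfrak{d}\leq\mathfrak{r}_{\mathsf{scatt}}$, $\mathfrak{irr}\leq\mathfrak{i}$, and $\mathfrak{r}_{\mathsf{nwd}}\leq\mathfrak{i}$ are reasonable; the step $\mathfrak{r}_{\mathsf{scatt}}\leq\mathfrak{irr}$ also needs care with the identification of $(\omega,\tau)$ inside $\mathbb{Q}$, since $\tau$ need not be metrizable, but that is a smaller issue compared to the one above.
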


The following are the questions we are answering.

\begin{question}[Question (3) from \cite{cancino_irresolvable_1}]
Is there a model where $\mathfrak{r}_{\mathsf{scattered}}<\mathfrak{irr}$?
\end{question}

\begin{question}[Question 3.11(5) from \cite{balzar_hrusak_hernandez}]
Is $\mathfrak{r}_{\mathbb{Q}}<\mathfrak{i}$ consistent?
\end{question}

\section{Filters of dense subsets of the rationals.}

\begin{dfn}\label{q_filter_def}
Let $(Dense(\mathbb{Q}),\subseteq^*)$ denote the partial order of dense subsets of the rationals with the order given by set inclusion modulo finite sets ($\subseteq^*$). We define the following:
\begin{enumerate}
    \item A maximal $\mathcal{F}\subseteq Dense(\mathbb{Q})$ will be called a rational filter. 
    \item A rational filter which is a $q$-filter will be called a rational $q$-filter.
    \item A rational filter which is a $p$-filter will be called a rational $p$-filter.
    \item A selective filter is a filter which is a $p$-filter and a $q$-filter at the same time.
    \item A rational selective filter is a rational filter which is selective.
\end{enumerate}
\end{dfn}

Usually we will identify $\mathbb{Q}$ with $\omega$ having a topology homeomorphic to the usual topology on $\mathbb{Q}$. In what follows, and for the rest of the paper, we denote by $\mathcal{B}$ a countable base for such topology, which will remain fixed. We also convey that whenever $\langle I_n:n\in\omega\rangle$ is a partition of $\omega$ into intervals, the enumeration is such that $\max(I_n)<\min(I_{n+1})$ for all $n\in\omega$.
 
We being recalling the following well known facts.

\begin{lemma}
Let $\mathcal{F}$ be a rational filter. Then:
\begin{enumerate}
    \item $\mathcal{F}$ is non-meager.
    \item If $\mathcal{F}$ is a $p$-filter, then for any decreasing $\langle B_n:n\in\omega\rangle\subseteq\mathcal{F}$, there is $X\in\mathcal{F}$ such that for infinitely many $n\in\omega$, $X\setminus n\subseteq B_n$.
    \item If $\mathcal{F}$ is a $q$-filter, then for any interval partition $\mathcal{I}=\langle I_n:n\in\omega\rangle$ of $\omega$ such that for all $n\in\omega$, $\max(I_n)<\min(I_{n+1})$, and for any $l\in\omega$, there is a partial selector $X\in\mathcal{F}$ of $\mathcal{I}$ such that for any $n,m\in\{k\in\omega:X\cap I_k\neq\emptyset\}$, $\vert n-m\vert\ge l$.
\end{enumerate}
\end{lemma}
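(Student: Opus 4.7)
For \emph{Part (1)}, I would argue by contradiction. If $\mathcal{F}$ were meager under the natural identification of $Dense(\mathbb{Q})$ with a subset of $2^{\omega}$, an adaptation of Talagrand's characterization of meager filters would produce a dense $D\subseteq\mathbb{Q}$ which is compatible with every element of $\mathcal{F}$ (meaning $D\cap Y\in Dense(\mathbb{Q})$ for every $Y\in\mathcal{F}$) but $D\notin\mathcal{F}$, contradicting the maximality of $\mathcal{F}$.

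For \emph{Part (2)}, which I expect to be the main obstacle, I would pick a pseudointersection $Y\in\mathcal{F}$ of $\langle B_{n}\rangle$ using the $p$-filter hypothesis. For each $n$ the set $F_{n}=Y\cap[n,\infty)\setminus B_{n}$ is a finite subset of $Y\setminus B_{n}$. I would then choose an increasing sequence $n_{0}<n_{1}<\cdots$ sparse enough that $F=\bigcup_{k}F_{n_{k}}$ is nowhere dense in $\mathbb{Q}$ (this uses the freedom in selecting the $n_{k}$ and the fact that $\omega$ is identified with $\mathbb{Q}$ topologically, so sparse enumerations correspond to topologically thin sets). The set $X=Y\setminus F$ is still dense, and for every $Z\in\mathcal{F}$ the intersection $X\cap Z=(Y\cap Z)\setminus F$ is dense (because removing a nowhere dense set from a dense set preserves density), so $X$ is compatible with every element of $\mathcal{F}$; by maximality $X\in\mathcal{F}$. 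By construction $X\cap[n_{k},\infty)\subseteq Y\cap[n_{k},\infty)\cap B_{n_{k}}\subseteq B_{n_{k}}$ for every $k$, yielding the required infinite family of witnesses.

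For \emph{Part (3)}, I would apply the $q$-filter property to the coarsening $\mathcal{J}=\langle J_{n}\rangle$ of $\mathcal{I}$ with $J_{n}=\bigcup_{j=2nl}^{(2n+2)l-1}I_{j}$ (blocks of $2l$ consecutive intervals of $\mathcal{I}$), and to the shifted coarsening $\mathcal{J}'$ obtained by translating $\mathcal{J}$ by $l$ intervals. Taking partial selectors $X_{1},X_{2}\in\mathcal{F}$ for $\mathcal{J}$ and $\mathcal{J}'$ respectively and setting $X=X_{1}\cap X_{2}\in\mathcal{F}$, any two distinct indices $n,m$ with $X\cap I_{n},X\cap I_{m}\neq\emptyset$ must lie in different $\mathcal{J}$-blocks and in different $\mathcal{J}'$-blocks; the interlocking pattern of the two coarsenings rules out adjacent length-$l$ sub-blocks and therefore forces $|n-m|\geq l$.
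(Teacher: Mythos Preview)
Your Part~(3) is essentially the paper's argument. Your Part~(1) differs from the paper---the paper shows directly that $\mathcal{F}=\bigcap_{B\in\mathcal{B}}\mathcal{U}_B$ for ultrafilters $\mathcal{U}_B\supseteq\mathcal{F}\cup\{B\}$, and then quotes that an intersection of countably many ultrafilters is non-meager---but your sketch via Talagrand is at least plausible, if underspecified.

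The genuine gap is in Part~(2). Your claim that one can choose $n_0<n_1<\cdots$ so that $F=\bigcup_k F_{n_k}$ is nowhere dense is false in general, and the justification (``sparse enumerations correspond to topologically thin sets'') confuses the order on $\omega$ with the topology on $\mathbb{Q}$: the fixed bijection $\omega\leftrightarrow\mathbb{Q}$ carries no relationship between the two. Here is a concrete obstruction. Fix a base $\{U_m:m\in\omega\}$ for $\mathbb{Q}$, and build increasing finite sets $G_n\subseteq\omega$ so that $G_n\setminus G_{n-1}\subseteq[n,\infty)$ and $G_n\setminus G_{n-1}$ meets every $U_m$ with $m\le n$ (possible since each $U_m\cap[n,\infty)$ is infinite). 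Set $B_n=\omega\setminus G_n$; these are cofinite, hence in $\mathcal{F}$, and decreasing. Take the pseudointersection $Y=\omega$. Then $F_n=(Y\setminus n)\setminus B_n=G_n\cap[n,\infty)\supseteq G_n\setminus G_{n-1}$, so $F_n$ meets every $U_m$ with $m\le n$. Hence for \emph{every} infinite $S\subseteq\omega$ the union $\bigcup_{n\in S}F_n$ is dense in $\mathbb{Q}$, and your construction cannot proceed.

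The paper sidesteps this entirely by feeding Part~(1) back into Part~(2): from a pseudointersection $X$ one defines $n_{k+1}$ large enough that $X\setminus n_{k+1}\subseteq B_{n_k}$, and then invokes the Talagrand--Jalali-Naini characterisation of non-meagerness to find $Y\in\mathcal{F}$ with $Y\cap[n_k,n_{k+1})=\emptyset$ for infinitely many $k$. For those $k$, $(X\cap Y)\setminus n_k=(X\cap Y)\setminus n_{k+1}\subseteq B_{n_k}$. No topological control over the ``error sets'' $F_n$ is needed; the work is done by non-meagerness, which is exactly what your argument was missing.
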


\begin{proof}
\begin{enumerate}
    \item [(1)] We prove that $\mathcal{F}$ is the intersection of $\omega$ many ultrafilters. Indeed, for any $B\in\mathcal{B}$,
    let $\mathcal{U}_B$ be an ultrafilter on $\mathbb{Q}$ extending $\mathcal{F}\cup\{B\}$. Then $\mathcal{F}=\bigcap_{B\in\mathcal{B}}\mathcal{U}_B$: it is clear that $\mathcal{F}\subseteq\bigcap_{B\in\mathcal{B}}\mathcal{
    U}_B$. Pick $X\in\bigcap_{B\in\mathcal{B}}\mathcal{U}_B$. Then for all $B\in\mathcal{B}$, $X\cap B$ is not empty, so $X$ is a dense subset of $\mathbb{Q}$. If $X\notin\mathcal{F}$, then there are $A\in\mathcal{F}$ and $B\in\mathcal{B}$ such that $X\cap B\cap A=\emptyset$. This implies that $X\notin \mathcal{U}_B$, which is a contradiction. Since the intersection of less than continuum many ultrafilters is a non-meager filter, then $\mathcal{F}$ is non-meager.
    
    \item [(2)] Recall that by the Talagrand-Jalaili-Naini theorem, a filter is non-meager if and only if for any interval partition $\langle I_k:k\in\omega\rangle$ of $\omega$, there is $X\in \mathcal{F}$ which has empty intersection with infinitely many intervals $I_k$. Fix a $\subseteq$-decreasing $\langle B_n:n\in\omega\rangle\subseteq\mathcal{F}$ and let $X\in\mathcal{F}$ be a pseudointersection of such sequence. Define an increasing sequence of natural numbers $\langle n_k:k\in\omega\rangle$ as $n_0=0$,  and for all $k\in\omega$, $X\setminus n_{k+1}\subseteq B_{n_k}$. Let $Y\in\mathcal{F}$ such that $Y\cap [n_k,n_{k+1})$ is empty for infinitely many $k\in\omega$. Define $Z=X\cap Y$. Let $k\in\omega$ be such that $Y\cap[n_k,n_{k+1})=\emptyset$. Then $Z\setminus n_{k}=Z\setminus n_{k+1}\subseteq B_{n_k}$.
    
    \item [(3)] Let $\langle I_k:k\in\omega\rangle$ be an interval partition of $\omega$ and $l\in\omega$ a positive natural number. For $k\in\omega$, define $J_k=\bigcup\{I_j:lk\leq j<lk+l\}$, $J^0_k=J_{2k}\cup J_{2k+1}$, and $J_k^1=J_{2k+1}\cup J_{2k+2}$. By hypothesis, we can find $X_0,X_1\in\mathcal{F}$ such that for all $k\in\omega$, $\vert J_k^0\cap X_0\vert,\vert J_k^1\cap X_1\vert\leq 1$. Let $Y=X_0\cap X_1$. Then $Y$ is as required.
\end{enumerate}

\end{proof}

The following construction was first presented in \cite{GOLDSTERN1990121} in the context of the cardinal invariants $\mathfrak{r}$ and $\mathfrak{u}$. Here we need the analogous version for $Dense(\mathbb{Q})$.

\begin{lemma}[$\mathsf{GCH}$]\label{filters_family}
There is a family $\mathscr{F}$ of selective rational filters such that $\vert\mathscr{F}\vert=\omega_2$ and for any forcing extension $W\supseteq V$ with the same cardinals, if $W\vDash\mathcal{D}\in[\mathscr{F}]^{\omega_1}$, then, in $W$, there are $\mathcal{D}'\in[\mathcal{D}]^{\omega_1}$ and an $AD$ family $\{A_\mathcal{U}:\mathcal{U}\in\mathcal{D}'\}$ such that $A_\mathcal{U}\in\mathcal{U}$ for any $\mathcal{U}\in\mathcal{D}'$.

\end{lemma}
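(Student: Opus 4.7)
The plan is to adapt the Goldstern--Shelah technique from \cite{GOLDSTERN1990121}, originally developed for building families of $\omega_2$ P-points with indestructibility properties, to the setting of selective rational filters on $\mathbb{Q}$. Under GCH the cardinal arithmetic is favourable: $2^\omega=\omega_1$, $|H(\omega_2)|=\omega_2$, and $\diamondsuit_{S^2_1}$ holds on the stationary set $S^2_1=\{\gamma<\omega_2:\mathrm{cf}(\gamma)=\omega_1\}$. We proceed by transfinite recursion on $\alpha<\omega_2$, constructing simultaneously a selective rational filter $\mathcal{U}_\alpha$ and a distinguished element $X_\alpha\in\mathcal{U}_\alpha\cap Dense(\mathbb{Q})$. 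At each step the filter is produced from a base of size ${<}\,\omega_1$ by the standard CH diagonalization over countable decreasing sequences and interval partitions, invoking Lemma~1 to witness the $p$- and $q$-filter properties.

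The novelty lies in the choice of $X_\alpha$. Fix a $\diamondsuit_{S^2_1}$-sequence $\langle T_\alpha:\alpha\in S^2_1\rangle$ with each $T_\alpha\subseteq\alpha$ of order type $\omega_1$, arranged so that every $A\in[\omega_2]^{\omega_1}\cap V$ satisfies $T_\alpha=A\cap\alpha$ for stationarily many $\alpha\in S^2_1$. When $\alpha\in S^2_1$ we pick $X_\alpha\in Dense(\mathbb{Q})$ almost disjoint (mod finite) from every $X_\beta$ with $\beta\in T_\alpha$; for other $\alpha$ we pick $X_\alpha$ arbitrarily. Such an $X_\alpha$ exists because we arrange inductively that the previously chosen $X_\beta$'s never form a MAD family on any basic open neighborhood of $\mathbb{Q}$, for instance by reserving in each basic open a copy of $\mathbb{Q}$ disjoint from the sets used so far; a countable greedy construction then produces a dense $X_\alpha$ placing at most finitely many points in each of the $\omega_1$ prescribed $X_\beta$'s. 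Once $X_\alpha$ is fixed, we absorb it into the filter base before closing off $\mathcal{U}_\alpha$ to a selective rational filter.

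For verification, let $W\supseteq V$ be cardinal-preserving and $\mathcal{D}\in[\mathscr{F}]^{\omega_1}$ in $W$. Set $A=\{\alpha<\omega_2:\mathcal{U}_\alpha\in\mathcal{D}\}$ and enumerate $A$ increasingly as $\langle\alpha_i:i<\omega_1\rangle$. On a cofinal $I\subseteq\omega_1$ of indices $i$, the reflection clause yields $T_{\alpha_i}=\{\alpha_j:j<i\}$ (immediate for $A\in V$ from $\diamondsuit_{S^2_1}$). For $i\in I$ the construction then gives $X_{\alpha_i}\in\mathcal{U}_{\alpha_i}$ almost disjoint from each $X_{\alpha_j}$, $j<i$, so $\{X_{\alpha_i}:i\in I\}$ is pairwise almost disjoint and the choice $\mathcal{D}'=\{\mathcal{U}_{\alpha_i}:i\in I\}$ with $A_{\mathcal{U}_{\alpha_i}}=X_{\alpha_i}$ witnesses the conclusion, using that the AD relation on $V$-sets is absolute between $V$ and $W$.

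The main obstacle is the reflection clause when $A\in W\setminus V$, since a bare $V$-diamond only captures $V$-sequences. The resolution, which is the technical core of the Goldstern--Shelah style argument and the reason Shelah's construction in \cite{shelah_i_less_u} is cited, is to strengthen the recursion so that $X_\alpha$ is chosen almost disjoint from \emph{every} $\omega_1$-family $\{X_\beta:\beta\in T\}$ with $T\in V\cap[\alpha]^{\omega_1}$, exploiting that under CH the pool of such $T$ at each stage has cardinality $\omega_1$ and can be fused into a single AD request. This universal AD-from-initial-segment property is intrinsic to the $V$-sequence $\langle X_\beta:\beta<\omega_2\rangle$, hence is preserved by cardinal-preserving forcing, and in $W$ it yields the desired cofinal $I$ for any enumeration of $A$, whether or not $A$ was in $V$.
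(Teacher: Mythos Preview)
Your ``resolution'' in the final paragraph contains a fatal cardinality error. You propose to choose $X_\alpha$ almost disjoint from $X_\beta$ for every $\beta$ lying in some $T\in V\cap[\alpha]^{\omega_1}$. But once $|\alpha|=\omega_1$, every $\beta<\alpha$ lies in some such $T$, so you are in fact demanding that $X_\alpha$ be almost disjoint from \emph{all} $X_\beta$ with $\beta<\alpha$. Carried through the recursion this produces a pairwise almost disjoint family $\{X_\alpha:\alpha<\omega_2\}$ of subsets of $\omega$, which is impossible under $\mathsf{CH}$: there are only $\omega_1$ subsets of $\omega$ in total. The earlier diamond version already suffers from the problem you yourself flag (the index set $A$ need not be in $V$), and your proposed repair cannot work for this reason. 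Choosing a single distinguished set $X_\alpha$ per filter simply cannot carry the AD information for all $\omega_2$ filters simultaneously.

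The paper avoids this by a genuinely different device: it builds a tree $\{A_\eta:\eta\in\omega_1^{<\omega_1}\}\subseteq Dense(\mathbb{Q})$ (only $\omega_1$ nodes under $\mathsf{CH}$) such that (i) $A_g\subseteq^* A_f$ whenever $f\subseteq g$, (ii) at each level the sets $\{A_f:f\in\omega_1^\alpha\}$ form an AD family, and (iii) the levels diagonalize against all dense sets and interval partitions. The filters are then the $\omega_2$ branches $\mathcal{U}_f$, $f\in\omega_1^{\omega_1}$. Given $\omega_1$ branches in $W$, the AD witnesses are not a single pre-assigned set per filter but are extracted from the \emph{splitting structure} of the branches: either some branch $h$ has every initial segment shared with uncountably many of the chosen branches (and one reads off AD sets from the first coordinate where they leave $h$), or every branch eventually isolates itself and one takes the corresponding node. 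This argument uses only the tree structure on $\omega_1^{<\omega_1}\cap V$, which is absolute, so nothing like a diamond guess for the $W$-set $\mathcal{D}$ is needed.
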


\begin{proof}
By recursion construct a family $\{A_\eta:\eta\in\omega_1^{<\omega_1}\}$ such that:
\begin{enumerate}
    \item $A_\emptyset=\mathbb{Q}$.
    
    \item For all $\alpha\in\omega_1$, $\{A_f:f\in\omega_1^{\alpha}\}\subseteq Dense(\mathbb{Q})$ is an almost disjoint family.
    
    \item For any interval partition of $\omega$, $\langle I_n:n\in\omega\rangle$, there is $\alpha\in\omega_1$ such that for all $f\in\omega_1^\alpha$, $A_f$ is a selector of $\langle I_n:n\in\omega\rangle$.
    
    \item For all $X\in Dense(\mathbb{Q})$, there is $\alpha\in\omega_1$ such that for all $f:\alpha\to\omega_1$, either $A_f\subseteq^*X$ or $A_f\cap X\notin Dense(\mathbb{Q})$.
    
    \item For $\beta<\alpha$, $f\in \omega_1^\beta$ and $g\in\omega_1^\alpha$ such that $f\subseteq g$, we have $A_{g}\subseteq^* A_f$.
\end{enumerate}
For $f\in\omega_1^{\omega_1}$, define $\mathcal{U}_f$ to be the filter generated by the family $\{A_{f\upharpoonright\alpha}:\alpha\in\omega_1\}$, and then $\mathscr{F}=\{\mathcal{U}_f:f\in\omega_1^{\omega_1}\}$. It is easy to see that $\mathcal{U}_f$ is a selective rational filter for all $f\in\omega_1^{\omega_1}$.

Now assume $W$ is a forcing extension of $V$ having the same cardinals. Let $\{f_\alpha:\in\omega_1\}\subseteq\omega_1^{\omega_1}\cap V$ be a set of cardinality $\omega_1$. For $h\in\bigcup_{\alpha\in\omega_1}\omega_1^\alpha$, define $\langle h\rangle=\{g\in \omega_1^{\omega_1}:h\subseteq g\}$. There are two cases:

Case 1: there is $h\in \omega_1^{\omega_1}$ such that for all $\alpha\in\omega_1$, $\langle h\upharpoonright\alpha\rangle\cap\{ f_\beta:\beta\in\omega_1\}$ has cardinality $\omega_1$. Then by recursion construct $L\in[\omega_1]^{\omega_1}$ and an increasing sequence $\{\beta_\alpha\in\omega_1:\alpha\in L\}$ such that $f_{\beta_\alpha}\upharpoonright\alpha=f\upharpoonright\alpha$ and $f_{\beta_\alpha}(\alpha)\neq f(\alpha)$. Now for $\alpha\in L$ define $A_\alpha=A_{f_{\beta_\alpha}\upharpoonright(\alpha+1)}$. The families $\mathcal{A}=\{A_\alpha:\alpha\in L\}$ and $\{\mathcal{U}_{f_{\beta_\alpha}}:\alpha\in L\}$ are as required.

Case 2: for all $h\in \omega_1^{\omega_1}$ there is $\alpha\in\omega_1$ such that $\langle h\upharpoonright\alpha\rangle\cap\{f_\beta:\beta\in\omega\}$ is at most countable. Note that in this case, given $h\in\omega_1^{\omega_1}$, we can find $\alpha\in\omega_1$ such that $\langle h\upharpoonright\alpha\rangle\cap\{f_\beta:\beta\in\omega_1\}$ has cardinality at most $1$. Thus, for each $\beta\in\omega_1$, we can find $\alpha_\beta\in\omega_1$ such that $\langle f_\beta\upharpoonright\alpha_\beta\rangle\cap\{f_\gamma:\gamma\in\omega_1\}$ has exactly one element. Let $A_\beta=A_{f_\beta\upharpoonright\alpha_\beta}$. Then $\{f_\beta:\beta\in\omega_1\}$ and $\{A_\beta:\beta\in\omega_1\}$ work.

\end{proof}

\section{Preserving selective rational filters.}

In this section we study the forcing we are using and develop its main properties. This forcing was introduced in \cite{shelah_i_less_u}, and was extensively developed in \cite{osvaldo_hm} for the ultrafilter version. A close relative of this forcing was worked on \cite{chodounsky_fischer_grebik_free_sequences}. Here we work with saturated ideals. Theorem \ref{main_section_theorem} can be proved using the same argument of Shelah from \cite{shelah_i_less_u}. However, we present a different proof in the framework of games and strategies, which allows for a simpler construction in such proof. This requires to prove Proposition \ref{second_game_lemma} and Corollary \ref{third_game_lemma}. Although this is an additional technical work, we believe the resulting proof of Theorem \ref{main_section_theorem} is easier to visualize with these techniques. The proof of the Sacks property is the same from \cite{shelah_i_less_u}, which we include for completeness.

Given an ideal $\mathscr{I}$ on $\omega$, $\mathscr{I}^*=\{\omega\setminus X:X\in\mathscr{I}\}$ denotes the dual filter of $\mathscr{I}$, while $\mathscr{I}^+=\mathcal{P}(\omega)\setminus \mathscr{I}$ denotes the family of $\mathscr{I}$-positive sets.

\begin{dfn}
Let $\mathscr{I}$ be an ideal on $\omega$. An $\mathscr{I}$-partition is a family $\vec{E}\subseteq\mathscr{I}$ of non-empty subsets such that:
\begin{enumerate}
    \item $\omega\setminus\bigcup \vec{E}\in\mathscr{I}$.
    \item Every two elements from $\vec{E}$ are disjoint and non-empty.
\end{enumerate}
Given an $\mathscr{I}$-partition $\vec{E}=\langle E_n:n\in\omega\rangle$, we assume it is enumerated in the canonical way, that is, $\min(E_n)<\min(E_{n+1})$ for all $n\in\omega$. we denote the set of canonical representatives by $A(\vec{E})=\{\min(\alpha):\alpha\in E\}=\{a_n^E:n\in\omega\}$, ordered by its increasing enumeration, and $dom(\vec{E})=\bigcup_{n\in\omega}{E_n}$. For every $i\in dom(\vec{E})$, we denote by $i/\vec{E}$ the element $E_n$ of $\vec{E}$ such that $i\in E_n$.

Given a $\mathscr{I}$-partition $\vec{E}$ and $n\in\omega$, let $\vec{E}*n$ be the $\mathscr{I}$-partition defined as $\vec{E}*n=\{E_k:k\ge n\}$, with its natural order.

\end{dfn}

\begin{dfn}\label{I_partitions_ordered}
Given $\vec{E}_0,\vec{E}_1$ $\mathscr{I}$-partitions, we define $\vec{E}_1\leq \vec{E}_0$ if and only if the following holds:
\begin{enumerate}
    \item $dom(\vec{E}_1)\subseteq dom(\vec{E}_0)$.
    \item $\vec{E}_0\upharpoonright dom(\vec{E}_1)$ is a finer partition than $\vec{E}_1$, that is, elements of $\vec{E}_1$ are unions of elements of $\vec{E}_0$.
\end{enumerate}
\end{dfn}

\begin{dfn}\label{shelah_forcing}
Let $\mathscr{I}$ be an ideal on $\omega$. Define the forcing $\mathbb{Q}(\mathscr{I})$ as the set of all ordered pairs $p=(H_p,\vec{E}_p)$ such that:
\begin{enumerate}
    \item[i)] $\vec{E}_p$ is an $\mathscr{I}$-partition. We denote by $\langle E_n^p:n\in\omega\rangle$ the natural enumeration of $\vec{E}_p$. The increasing enumeration of $A(\vec{E}_p)$ will be denoted by $\langle a_n^p:n\in\omega\rangle$.
    \item[ii)] $H_p$ is a sequence of functions $H_p(n):2^{A(\vec{E}_p)\cap(n+1)}\to 2$.
    \item[iii)] For any $n\in A(\vec{E}_p)$, $H(n)=e_n$, where $e_n$ is the evaluation in the $n^{th}$-coordinate: $e_n(x)=x(n)$.
    \item[iv)] If $n=\min(i/E_p)$, then $H_p(i)=e_n$ or $H_p(i)=1-e_n$.
\end{enumerate}
The order is defined as $q\leq p$ if and only if:
\begin{enumerate}
    \item $\vec{E}_q\leq \vec{E}_p$.
    \item For all $i\in dom(\vec{E}_p)$, if $n=\min(i/\vec{E}_p)$ and $H_p(i)=H_p(n)$, then $H_q(i)=H_q(n)$; if $H_p(i)=1-H_p(n)$, then $H_q(i)=1-H_q(n)$.
    \item For all $n\in\omega\setminus dom(\vec{E}_p)$, if $x\in 2^{A(\vec{E}_q)\cap (n+1)}$, define $$x_{n,q\to p}=x\cup \langle (j,H_q(j)(x\upharpoonright(j+1))):j\in (A(\vec{E}_p)\cap (n+1))\setminus A(\vec{E}_q)\rangle$$
    Then:
    \begin{equation*}
        H_q(n)(x)=H_p(n)(x_{n,q\to p})
    \end{equation*}
\end{enumerate}
\end{dfn}

\begin{dfn}
Let $\mathscr{I}$ be an ideal on $\omega$ and $G\subseteq\mathbb{Q}(\mathscr{I})$ a generic filter over $V$. Define a $\mathbb{Q}(\mathscr{I})$-name $\dot{x}_{gen}$ for a subset of $\omega$ as follows:
\begin{equation*}
    n\in\dot{x}_{gen}\iff (\exists p\in G)(H_p(n)\equiv 1)
\end{equation*}
Here $H_{p}(n)\equiv i$ means $H_{p}(n)$ is the constant function with value $i$.
\end{dfn}

\begin{dfn}
Let $\mathscr{I}$ be a saturated ideal on $\omega$. The game $\mathscr{G}(\mathscr{I})$ between Player I and Player II play $\mathscr{I}$-partitions, is defined as follows:

\begin{center}
\begin{tabular}[t]{l |c |c |c |c |c |r}
Player I & $\vec{E}^1_0$ &      $ $      & $\vec{E}^1_1\leq \vec{E}^2_0$ & $ $    & $\ldots\ \ \ $  &\\
\hline
Player II & $ $          & $\vec{E}^2_0\leq \vec{E}^1_0$ & $ $         & $\vec{E}^2_1\leq \vec{E}^1_1$ & & $\ldots\ \ \ $\\
\end{tabular}
\end{center}

Player II wins if and only if $\bigcup_{n\in\omega}dom(\vec{E}^1_n)\setminus dom(\vec{E}^2_n)\in\mathscr{I}^*$.
\end{dfn}

Recall that an ideal $\mathscr{I}$ on $\omega$ is saturated if the quotient $\mathcal{P}(\omega)/\mathscr{I}$ is $c.c.c.$.

\begin{lemma}\label{saturated_strategy}
Let $\mathscr{I}$ be a saturated ideal. Then Player I has no winning strategy in the game $\mathscr{G(I)}$.
\end{lemma}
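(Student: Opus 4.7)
The plan is to exhibit, for any Player I strategy $\sigma$, an explicit play for Player II that wins; no deep combinatorics is needed. The key preliminary observation is that because of the monotonicity $dom(\vec{E}^1_0)\supseteq dom(\vec{E}^2_0)\supseteq dom(\vec{E}^1_1)\supseteq\ldots$ forced by the order $\leq$, a straightforward induction using $dom(\vec{E}^1_{n+1})\subseteq dom(\vec{E}^2_n)$ yields the identity
\[
\omega\setminus\bigcup_{n\in\omega}\bigl(dom(\vec{E}^1_n)\setminus dom(\vec{E}^2_n)\bigr)=\bigcap_{n\in\omega}dom(\vec{E}^2_n)\cup(\omega\setminus dom(\vec{E}^1_0)).
\]
Since the second summand already lies in $\mathscr{I}$, the winning condition $\bigcup_{n}(dom(\vec{E}^1_n)\setminus dom(\vec{E}^2_n))\in\mathscr{I}^*$ is equivalent to $\bigcap_{n}dom(\vec{E}^2_n)\in\mathscr{I}$, so it is enough for Player II to shrink the intersection of domains to the empty set.

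At stage $n$, let $\vec{E}^1_n=\langle E^n_k:k\in\omega\rangle$ denote Player I's move as prescribed by $\sigma$. The canonical-enumeration convention $\min(E^n_k)<\min(E^n_{k+1})$ yields $\min(E^n_k)\geq k$. Player II simply discards the first $n$ blocks and plays $\vec{E}^2_n:=\{E^n_k:k\geq n\}$. First I would check this is a legal move: the set $\omega\setminus\bigcup_{k\geq n}E^n_k$ equals $(\omega\setminus dom(\vec{E}^1_n))\cup\bigcup_{k<n}E^n_k$, a finite union of elements of $\mathscr{I}$, so $\vec{E}^2_n$ is indeed an $\mathscr{I}$-partition; and since $\vec{E}^2_n$ literally coincides with $\vec{E}^1_n\upharpoonright dom(\vec{E}^2_n)$, the clause of Definition \ref{I_partitions_ordered} demanding that $\vec{E}^1_n\upharpoonright dom(\vec{E}^2_n)$ be finer than $\vec{E}^2_n$ is trivially satisfied, so $\vec{E}^2_n\leq\vec{E}^1_n$.

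Finally, every element of $dom(\vec{E}^2_n)$ is at least $\min(E^n_n)\geq n$, hence $dom(\vec{E}^2_n)\subseteq[n,\infty)$ and $\bigcap_{n}dom(\vec{E}^2_n)\subseteq\bigcap_{n}[n,\infty)=\emptyset$. Combined with the identity of the first paragraph, this shows that the complement of the total drop lies in $\mathscr{I}$, so the total drop lies in $\mathscr{I}^*$ and Player II wins. The only nuance that needs care is the inductive identity in the first paragraph; everything else is bookkeeping. I want to note that this direct tail-shift strategy does not actually appeal to the saturation of $\mathscr{I}$: the saturation hypothesis will presumably become essential only in the refinements of this lemma, namely Proposition \ref{second_game_lemma} and Corollary \ref{third_game_lemma}, which are what will supply the stronger game-theoretic input needed for the forcing arguments later in the section.
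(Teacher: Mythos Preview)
Your proof contains a genuine error in the very first paragraph. The claimed identity
\[
\omega\setminus\bigcup_{n\in\omega}\bigl(dom(\vec{E}^1_n)\setminus dom(\vec{E}^2_n)\bigr)=(\omega\setminus dom(\vec{E}^1_0))\cup\bigcap_{n\in\omega}dom(\vec{E}^2_n)
\]
is false, because it ignores the shrinking that \emph{Player I} performs when passing from $\vec{E}^2_n$ to $\vec{E}^1_{n+1}$. The nested chain of domains yields instead
\[
\omega\setminus\bigcup_{n}\bigl(dom(\vec{E}^1_n)\setminus dom(\vec{E}^2_n)\bigr)=(\omega\setminus dom(\vec{E}^1_0))\;\cup\;\bigcup_{n}\bigl(dom(\vec{E}^2_n)\setminus dom(\vec{E}^1_{n+1})\bigr)\;\cup\;\bigcap_{n}dom(\vec{E}^2_n),
\]
and the middle term---Player I's cumulative drop---need not lie in $\mathscr{I}$: each summand is in $\mathscr{I}$, but the ideals in question are never $\sigma$-ideals. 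Concretely, against your tail-shift Player I can at every stage choose the partition so that the first $n$ blocks lie entirely inside a fixed closed set $\omega\setminus U$ with $U$ open nonempty (this only requires discarding an $\mathscr{I}$-set from the current domain); then every $D_n$ misses $U$, hence $\bigcup_n D_n\subseteq\omega\setminus U\notin\mathscr{I}^*$, and Player II loses. Even for $\mathscr{I}=\mathrm{fin}$ your strategy fails: if Player I always plays singletons and deletes the least remaining point before handing the partition back, Player I's drops form an infinite set disjoint from $\bigcup_n D_n$.

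So saturation cannot be bypassed here; it is exactly what the paper's argument exploits. The paper assumes a winning strategy $\Sigma$ for Player I, builds a tree of runs indexed by $2^{<\omega}$ in which Player II's moves along distinct branches eventually diverge, and observes that if Player I wins along every branch $f\in 2^\omega$ then the corresponding drop-sets $D(f)$ are pairwise $\mathscr{I}$-almost disjoint elements of $\mathscr{I}^+$, giving an uncountable antichain in $\mathcal{P}(\omega)/\mathscr{I}$ and contradicting saturation. Your closing remark is also off: Proposition~\ref{second_game_lemma} and Corollary~\ref{third_game_lemma} treat an entirely different game and rely on non-near-coherence of two filters, not on saturation; saturation is used precisely in the lemma you are trying to prove.
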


\begin{proof}
Assume otherwise $\Sigma$ is a winning strategy for Player I. We will produce an uncountable family of $\mathscr{I}$-positive sets. Let $\langle s_n:n\in\omega\rangle$ be the enumeration of $2^{<\omega}$ defined by the ordering $s\sqsubset t$ if and only if, either $\vert s\vert<\vert t\vert$ or $\vert s\vert=\vert t\vert$ and $s<_{lex}t$, where $<_{lex}$ is the lexicographical order ($\sqsubset$  has order type $\omega$). We construct a sequence $\langle p_s:s\in2^{<\omega}\rangle$ such that:
\begin{enumerate}
    \item $p_{\emptyset}=(\Sigma(\emptyset))=(\vec{E}_\emptyset)$.
    \item For $s\in 2^{<\omega}\setminus\{ \emptyset\}$, $p_s=(\vec{F}_s,\vec{E}_s)$.
    \item $\vec{F}_{s_1}=\vec{E}_\emptyset$ and $\vec{E}_{s_1}=\Sigma(\vec{E}_{\emptyset},\vec{F}_{s_1})$.
    \item For $n>1$:
    \begin{enumerate}
        \item If $\vec{E}_{s_n}$ is defined, then define $\vec{F}_{s_{n+1}}=\vec{E}_{s_n}$.
        \item Now, let 
        \begin{equation*}
            P(s_{n+1})=(\vec{E}_\emptyset,\vec{F}_{s_n\upharpoonright 1},\vec{E}_{s_n\upharpoonright 1},\ldots,\vec{F}_{s_n\upharpoonright l},\vec{E}_{s_n\upharpoonright l},\ldots,\vec{E}_{s_{n+1}\upharpoonright(\vert s_{n+1}\vert-1)}, \vec{F}_{s_{n+1}})
        \end{equation*}
        Then make $\vec{E}_{s_{n+1}}=\Sigma(P(s_{n+1}))$.
    \end{enumerate}
    
\end{enumerate}

Now, $f\in 2^\omega$, define $br(f)$ as
\begin{equation*}
    br(f)=(\vec{E}_\emptyset,\vec{F}_{f\upharpoonright 1},\vec{E}_{f\upharpoonright 1},\ldots,\vec{F}_{f\upharpoonright l},\vec{E}_{f\upharpoonright l},\ldots)_{l\in\omega}
\end{equation*}
By the construction, $br(f)$ is a run of the game $\mathscr{G}(\mathscr{I})$ on which Player I followed the strategy $\Sigma$, so we have that $D(f)=\bigcup_{n\ge 1}dom(\vec{F}_{f\upharpoonright n})\setminus dom(\vec{E}_{f\upharpoonright n})\in\mathscr{I}^+$. However, for different $f,g\in 2^{\omega}$ we have that $D(f)\cap D(g)\in\mathscr{I}$, which implies that $\{D(f):f\in 2^{\omega}\}$ is an uncountable antichain in $\mathscr{I}^+$, which is a contradiction.
\end{proof}

\noindent\textbf{Notation.} Given a condition $p\in\mathbb{Q}(\mathscr{I})$, and $s\in 2^n$, define $p^s$ as follows:
\begin{enumerate}
    \item $dom(\vec{E}_{p^s})=\vec{E}_p*\vert s\vert$.
    \item $\vec{E}_{p^s}=\vec{E}_p\upharpoonright dom(\vec{E}_{p^s})$.
    \item For $k<\vert s\vert$, $H_{p^s}(a_k^p)\equiv s(k)$.
    \item If $j\in a_i^p/\vec{E}_p$ for some $i<\vert s\vert$, we have that 
    \begin{enumerate}
        \item If $H_{p}(j)=H_p(a_i^p)$, then $H_{p^s}(j)\equiv s(i)$.
        \item If $H_{p}(j)=1-H_p(a_i^p)$, then $H_{p^s}(j)= 1-s(i)$.
    \end{enumerate}
    \item For $j\in dom(\vec{E}_{p^s})$, and $x\in 2^{A(\vec{E}_{p^s})\cap(j+1)}$, note that $$x_{j,p^s\to p}=\langle(a_i^p,s(i)):i<n\rangle\cup x$$
    and define
    $$H_{p^s}(j)(x)=H_p(j)(x_{j,p^s\to p})$$
    
    \item If $j\notin dom(\vec{E}_p)$, then for all $x\in 2^{A(\vec{E}_{p^s})\cap (j+1)}$ we have $H_{p^s}(j)(x)=H_{p}(j)(x_{j,p^s\to p})$.
\end{enumerate}

\begin{dfn}
Let $\mathscr{I}$ be an ideal on $\omega$, $p,q\in\mathbb{Q}(\mathscr{I})$ and $n\in\omega$. We define $q\leq_n p$ if and only if the following happens:
\begin{enumerate}
    \item $q\leq p$.
    \item For all $i\leq n$, $a_i^p=a_i^q$.
    \item For all $i\leq n$, $a_i^p/\vec{E}_p=a_i^q/\vec{E}_q$.
\end{enumerate}
\end{dfn}

The following fact is easy to prove.

\begin{lemma}
Let $\mathscr{I}$ be a saturated ideal. Then: $\mathbb{Q}(\mathscr{I})\Vdash\dot{x}_{gen},\omega\setminus\dot{x}_{gen}\in\mathscr{I}^+$.
\end{lemma}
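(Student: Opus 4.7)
The plan is to argue by contradiction: if some $p\in\mathbb{Q}(\mathscr{I})$ and some $A\in\mathscr{I}$ satisfied $p\Vdash\dot{x}_{gen}\subseteq A$, I will show $\mathrm{dom}(\vec{E}_p)\subseteq A$, which is absurd because $\omega\setminus\mathrm{dom}(\vec{E}_p)\in\mathscr{I}$ (by the definition of an $\mathscr{I}$-partition) forces $\mathrm{dom}(\vec{E}_p)\in\mathscr{I}^+$ for proper $\mathscr{I}$. A perfectly symmetric argument, obtained by swapping the roles of $1$ and $0$ in the recipe for $p^s$, will yield $\Vdash\omega\setminus\dot{x}_{gen}\in\mathscr{I}^+$.

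The main step relies on the short extensions $p^s$ introduced just before Definition \ref{shelah_forcing}. For each $k\in\omega$ I split the cell $E_k^p$ into $E_k^{p,+}=\{j\in E_k^p : H_p(j)=H_p(a_k^p)\}$ and $E_k^{p,-}=E_k^p\setminus E_k^{p,+}$; by clause (iv) of Definition \ref{shelah_forcing} these are the only two possibilities, so this is a genuine partition of $E_k^p$. Taking any $s\in 2^{k+1}$ with $s(k)=1$, the explicit recipe for $H_{p^s}$ gives $H_{p^s}(j)\equiv 1$ for every $j\in E_k^{p,+}$ and $H_{p^s}(j)\equiv 0$ for every $j\in E_k^{p,-}$. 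Therefore $p^s\Vdash E_k^{p,+}\subseteq\dot{x}_{gen}$, and combining this with $p^s\leq p\Vdash\dot{x}_{gen}\subseteq A$ produces the ground-model inclusion $E_k^{p,+}\subseteq A$, by absoluteness of $\in$ for ground-model sets.

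Running the very same argument with $s\in 2^{k+1}$ satisfying $s(k)=0$ flips which half of $E_k^p$ is forced into $\dot{x}_{gen}$ and thereby yields $E_k^{p,-}\subseteq A$. Hence $E_k^p\subseteq A$ for every $k$, so $\mathrm{dom}(\vec{E}_p)=\bigcup_{k\in\omega} E_k^p\subseteq A$, contradicting $A\in\mathscr{I}$. I foresee no real obstacle here: the whole argument amounts to unpacking the explicit description of $p^s$ and invoking the definition of an $\mathscr{I}$-partition, which is exactly why the paper labels this lemma ``easy to prove''.
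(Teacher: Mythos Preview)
Your argument is correct and is exactly the kind of direct verification the paper intends when it omits the proof and labels the fact ``easy to prove''. One trivial remark: the conditions $p^s$ are introduced in the Notation paragraph \emph{after} Definition~\ref{shelah_forcing}, not before it, and saturation of $\mathscr{I}$ plays no role in your argument (any proper ideal suffices), but neither point affects the validity of the proof.
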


\begin{lemma}\label{initial_guessing}
Let $\mathscr{I}$ be an ideal, $p\in\mathbb{Q}(\mathscr{I})$ a condition, $\dot{x}$ a $\mathbb{Q}(\mathscr{I})$-name for an element in $V$ and $n\in\omega$. Then there is $q\leq_{n-1} p$ such that for all $s\in 2^n$, $q^s$ decides the value of $\dot{x}$. Moreover, the set of conditions $\{q^{s}:s\in 2^n\}$ is a maximal antichain below $q$.
\end{lemma}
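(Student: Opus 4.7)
The plan is a finite fusion over $2^n$. Enumerate $2^n=\{s_i:i<2^n\}$ in some order and recursively build a chain $p=p_0\geq_{n-1}p_1\geq_{n-1}\cdots\geq_{n-1}p_{2^n}=:q$, at each stage choosing $p_{i+1}$ so that $p_{i+1}^{s_i}$ decides $\dot{x}$. The transitivity of $\leq_{n-1}$ combined with the monotonicity of the operation $\cdot\mapsto\cdot^s$ then yields $q^{s_i}\leq p_{i+1}^{s_i}$, so $q^{s_i}$ decides $\dot{x}$ with the same value, for every $i<2^n$.

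For the inductive step, given $p_i$, pick any $r\leq p_i^{s_i}$ deciding $\dot{x}$; this is possible because $\dot{x}$ names an element of $V$, so the set of conditions deciding it is dense. Then lift $r$ back to $p_{i+1}$ by prepending the first $n$ blocks $E_0^{p_i},\ldots,E_{n-1}^{p_i}$ of $\vec{E}_{p_i}$ to $\vec{E}_r$, and extending $H_r$ to all of $\omega$ by inverting the recipe defining $p^{s_i}$ in the Notation following Definition \ref{shelah_forcing}: on the first $n$ blocks copy $H_{p_i}$, and on $dom(\vec{E}_{p_{i+1}})$ above them prepend the pairs $(a_k^{p_i},s_i(k))$ for $k<n$ to the arguments of $H_r$. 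This produces a condition $p_{i+1}\leq_{n-1}p_i$ with $p_{i+1}^{s_i}=r$. The main technical obstacle is precisely this lifting: one must verify clauses (ii)-(iv) of Definition \ref{shelah_forcing} for $p_{i+1}$ and clauses (1)-(3) of the order relation with respect to $p_i$. This is bookkeeping, but it is non-trivial since one must harmonize $H_r$ on $dom(\vec{E}_r)$ with the appended values of $H_{p_i}$, checking in particular that the auxiliary function $x_{n,q\to p}$ from the order definition behaves correctly.

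For the maximal antichain assertion: if $s\neq t$ in $2^n$ differ at some coordinate $k<n$, the conditions $q^s$ and $q^t$ force $\dot{x}_{gen}(a_k^q)$ to take the distinct values $s(k)$ and $t(k)$ respectively, so they are incompatible. For predensity below $q$, any $r\leq q$ can be strengthened so that the values $H_r(a_k^q)$ become constant for each $k<n$ (equivalently, so that $r$ decides $\dot{x}_{gen}(a_k^q)$); the resulting pattern defines an $s\in 2^n$ such that the strengthening is below $q^s$, witnessing compatibility of $r$ with $q^s$. Combining pairwise incompatibility with predensity, $\{q^s:s\in 2^n\}$ is a maximal antichain below $q$, as required.
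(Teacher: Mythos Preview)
Your proposal is correct and follows essentially the same finite-fusion argument as the paper: enumerate $2^n$, and at each step extend below $p_i^{s_i}$ to some $r$ deciding $\dot{x}$, then lift $r$ back to $p_{i+1}\leq_{n-1}p_i$ by prepending the first $n$ blocks of $\vec{E}_{p_i}$. The paper's lift sets $H_{p_{i+1}}(m)(x)=H_r(m)(x\upharpoonright A(\vec{E}_r))$ for $m\in dom(\vec{E}_r)$---that is, it simply \emph{ignores} the extra first-$n$ coordinates rather than plugging in the specific values $s_i(k)$ as your phrasing suggests---and it leaves the maximal-antichain assertion implicit, so your treatment of that clause is actually more complete.
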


\begin{proof}
Let $\{s_i:i< 2^n\}$ be an enumeration of all the sequences $s:n\to 2$. Define a decreasing sequence of conditions $\langle q_i:i\in\{-1\}\cup 2^n\rangle$ such that:
\begin{enumerate}
    \item $q_{-1}=p$.
    \item $q_{i+1}\leq_{n-1} q_i$.
    \item $q_{i}^{s_i}$ decides the value of $\dot{x}$.
\end{enumerate}
The construction is done as follows:
\begin{enumerate}
    \item Assume $q_i$ has been defined. Find $r_{i+1}\leq q_i^{s_{i+1}}$ which decides the value of $\dot{x}$. Then define $q_{i+1}$ as:
    \begin{enumerate}
        \item $dom(q_{i+1})=\bigcup_{j<n}a_{j}^{p}/\vec{E}_p\cup dom(\vec{E}_{r_{i+1}})$.
        
        \item $\vec{E}_{q_{i+1}}=\{a_j^p/\vec{E}_p:j<n\}\cup \vec{E}_{r_{i+1}}$.
        
        \item If $m\in a_{j}^p/\vec{E}_{q_{i+1}}$ for some $j<n$, then $H_{q_{i+1}}(j)=H_{p}(j)$.
        
        \item For all $m\in dom(\vec{E}_{r_{i+1}})$ and $x\in 2^{A(\vec{E}_{q_{i+1}})\cap (m+1)}$, $$H_{q_{i+1}}(m)(x)=H_{r_{i+1}}(m)(x\upharpoonright A(\vec{E}_{r_{i+1}}))$$
        
        \item For $m\notin dom(\vec{E}_{q_i})$ and $x\in 2^{A(\vec{E}_{q_{i+1}})\cap(m+1)}$, $$H_{q_{i+1}}(m)(x)=H_{q_i}(m)(x_{m,q_{i+1}\to q_i})$$
    \end{enumerate}
    It follows directly from the definitions that $q_{i+1}\leq_{n-1} q_i$.
\end{enumerate}
Finally, note that by construction, for all $i<2^n$ we have $q_{2^n-1}^{s_i}\leq q_{i}^{s_i}=r_{i}$, and $r_{i}$ decides the value of $\dot{x}$, so $q_{2^n-1}^{s_i}$ decides the value of $\dot{x}$.
\end{proof}

\begin{dfn}
Let ${p}\in\mathbb{Q}(\mathscr{I})$ be a condition, $n\in\omega$ and $\vec{F}$ an $\mathscr{I}$-partition such that $\vec{F}\leq \vec{E}_p*n$ and $dom(\vec{F})=dom(\vec{E}_p*n)$. Define the condition $p*\vec{F}$ as follows:
\begin{enumerate}
    \item $\vec{E}_{p*\vec{F}}=\{E_i^p:i<n\}\cup \vec{F}$. Note that $dom(\vec{E}_{p*\vec{F}})=dom(\vec{E})$.
    \item If $a\in A(\vec{E}_p)\setminus A(\vec{E}_{p*\vec{E}})$, define $H_{p*\vec{E}}(a)=H_{p*\vec{E}}(\min(a/\vec{E}_{p*\vec{F}}))$.
    \item For $i\in dom(\vec{E}_{p*\vec{F}})$:
    \begin{enumerate}
        \item If $H_p(i)=H_p(\min(i/\vec{E}_p))$, then $H_{p*\vec{F}}(i)=H_{p*\vec{F}}(\min(i/\vec{E}_p))$.
        \item If $H_p(i)=1-H_p(\min(i/\vec{E}_p))$, then $H_{p*\vec{F}}(i)=1-H_{p*\vec{F}}(\min(i/\vec{E}_p))$.
    \end{enumerate}
    \item For $i\notin dom(\vec{E}_p)$, and $x\in 2^{A(p*\vec{F})\cap(i+1)}$, define $$H_{p*\vec{E}}(i)(x)=H_p(i)(x_{i,p*\vec{E}\to p})$$ 
\end{enumerate}
Note that $p*\vec{F}\leq_{n-1} p$.
\end{dfn}

\begin{prp}\label{sacks_property}
If $\mathscr{I}$ be a saturated ideal, then $\mathbb{Q}(\mathscr{I})$ has the Sacks property.
\end{prp}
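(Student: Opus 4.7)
The plan is a fusion argument making repeated use of Lemma \ref{initial_guessing}, following the template of Shelah's proof from \cite{shelah_i_less_u}. Fix a condition $p_0$ and a $\mathbb{Q}(\mathscr{I})$-name $\dot{f}$ for a function in $\omega^\omega$ bounded by a ground-model function. The goal is to produce $q\leq p_0$ and a ground-model slalom $S:\omega\to[\omega]^{<\omega}$ with $|S(n)|\leq 2^{n+1}$ such that $q\Vdash\dot{f}(n)\in S(n)$ for all $n$; any such growth rate is well known to suffice for the Sacks property after regrouping coordinates.

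I would recursively build a fusion sequence $p_0\geq p_1\geq p_2\geq\cdots$ with $p_{n+1}\leq_n p_n$ for every $n$. At step $n$, apply Lemma \ref{initial_guessing} to $p_n$, the name $\dot{f}(n)$, and index $n+1$ to obtain $p_{n+1}\leq_n p_n$ such that $\{p_{n+1}^s:s\in 2^{n+1}\}$ is a maximal antichain below $p_{n+1}$ and each $p_{n+1}^s$ decides $\dot{f}(n)$, say $p_{n+1}^s\Vdash\dot{f}(n)=v_n^s$. Put $S(n)=\{v_n^s:s\in 2^{n+1}\}$, so $|S(n)|\leq 2^{n+1}$. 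The fusion limit $p_\omega$ is obtained by gluing the stabilized initial segments: because $\leq_n$ preserves the first $n+1$ blocks and the corresponding values of $H$, the sequence of blocks $E_k^{p_\omega}:=E_k^{p_{k+1}}$ (equal to $E_k^{p_m}$ for every $m>k$) and the associated $H_{p_\omega}$ are unambiguously defined, and a routine check shows $p_\omega\leq_n p_n$ for every $n$.

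Once $p_\omega$ is known to be a condition, the Sacks property is immediate: for each $n$, the set $\{p_{n+1}^s:s\in 2^{n+1}\}$ is a maximal antichain below $p_{n+1}\geq p_\omega$, so any generic filter containing $p_\omega$ must meet it and thereby force $\dot{f}(n)=v_n^s\in S(n)$ for some $s$. Taking $q=p_\omega$ finishes the argument.

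The main obstacle is the verification that $\vec{E}^{p_\omega}$ is genuinely an $\mathscr{I}$-partition, i.e., that $\omega\setminus\bigcup_k E_k^{p_\omega}\in\mathscr{I}$. Each step $p_n\rightsquigarrow p_{n+1}$ coarsens-and-restricts the tail $\vec{E}^{p_n}\ast(n+1)$, and although the first $n+1$ blocks are frozen, the restriction may discard elements of the tail; across infinitely many stages these losses could, a priori, accumulate to a set whose complement is $\mathscr{I}$-positive. To control this I would interpret the recursive choice of the tails $\vec{E}^{p_n}\ast(n+1)$ as a play in the game $\mathscr{G}(\mathscr{I})$ and invoke Lemma \ref{saturated_strategy}---the single place where the saturation of $\mathscr{I}$ enters---to schedule Player II's refinements so that the total loss remains $\mathscr{I}$-small, keeping $\mathrm{dom}(\vec{E}^{p_\omega})\in\mathscr{I}^*$. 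Modulo this game-theoretic bookkeeping, interleaved in the obvious way with the application of Lemma \ref{initial_guessing} at each stage, the argument follows exactly the same lines as Shelah's proof in \cite{shelah_i_less_u}.
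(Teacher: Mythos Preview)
Your proposal is correct and follows essentially the same approach as the paper: a fusion along $\leq_n$ using Lemma~\ref{initial_guessing} at each stage, with the game $\mathscr{G}(\mathscr{I})$ and Lemma~\ref{saturated_strategy} invoked to guarantee that the limit $\vec{E}_{p_\omega}$ is an $\mathscr{I}$-partition. The one point the paper makes explicit that you leave vague is the mechanism by which Player~II's moves are woven into the construction: after Player~II responds with $\vec{E}_n^2\leq\vec{E}_n^1=\vec{E}_{q_n}*(n+1)$, the paper collapses the difference $dom(\vec{E}_n^1)\setminus dom(\vec{E}_n^2)$ into a \emph{single} block via $\vec{G}_n=\{dom(\vec{E}_n^1)\setminus dom(\vec{E}_n^2)\}\cup\vec{E}_n^2$ and continues the fusion from $q_n*\vec{G}_n$; this forces the $n$-th block of the limit to be exactly that difference, so Player~II winning (i.e.\ $\bigcup_n dom(\vec{E}_n^1)\setminus dom(\vec{E}_n^2)\in\mathscr{I}^*$) is precisely the statement that $dom(\vec{E}_{q_\omega})\in\mathscr{I}^*$. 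Your phrase ``schedule Player~II's refinements so that the total loss remains $\mathscr{I}$-small'' slightly obscures this, since in the game the differences are not the discarded part but the \emph{retained} blocks, and one does not choose Player~II's moves but rather appeals to the existence of a run in which Player~II wins against the strategy defined by the construction.
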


\begin{proof}
Let $\dot{f}$ be a $\mathbb{Q}(\mathscr{I})$-name and $p\in\mathbb{Q}(\mathscr{I})$ a condition. We construct a  decreasing sequence $\langle q_n:n\in\omega\rangle$, and on the side give a strategy to Player I in the game $\mathscr{G(I)}$. Since this is not a winning strategy, there is a run of the game in which Player II wins. The existence of such run of the game will give us the desired condition.

The construction of the sequence is done as follows:
\begin{enumerate}
    \item Start by choosing $q_0\leq p$ which defines $\dot{f}(0)$; then Player I plays the $\mathscr{I}$-partition $\vec{E}_{q_0}*1$
    \item Suppose $q_n$ is constructed and Player I has played $\vec{E}^1_n=\vec{E}_{q_n}*(n+1)$. Then Player II has answered with $\vec{E}^2_n$. Define $\vec{G}_n=\{dom(\vec{E}_n^1)\setminus dom(\vec{E}_n^2)\}\cup \vec{E}_n^2$. Clearly $\vec{G}_n$ is an $\mathscr{I}$-partition. Then find $q_{n+1}\leq_{n} q_n*\vec{G}_n$ and such that for all $s:n+1\to 2$, $q_{n+1}^s$ decides the value of $\dot{f}(n+1)$. Then Player I plays $\vec{E}^1_{n+1}=\vec{E}_{q_{n+1}}*(n+2)$.
    \end{enumerate}

Since this is not a winning strategy for Player I, there is a run of the game in which $\bigcup_{n\in\omega}dom(\vec{E}_n^1)\setminus dom(\vec{E}_n^2)\in\mathscr{I}^*$. Suppose $\langle q_n:n\in\omega\rangle$ is the sequence constructed along such run of the game. Let $F_n=dom(E_n^1)\setminus dom(E_n^2)$ and define $\vec{F}=\langle F_n:n\in\omega\rangle$. Clearly, $\vec{F}$ is a  $\mathscr{I}$-partition. Note that for any $n\in\omega$, $F_n=E_{n}^{q_{n+1}}$, and actually, for any $k\ge n+1$ we have that $F_n=E_{n}^{q_{k+1}}$. Also note that for any $n\in\omega$ and $x\in 2^{A(\vec{F})\cap(n+1)}$, the sequence $\langle H_{q_k}(n)(x):k\in\omega\rangle$ is eventually constant (it may be not defined on an initial segment, but at some point it gets well defined, and even eventually constant).

Define a condition $q_\omega$ as follows:
\begin{enumerate}
    \item[(I)] $\vec{E}_{q_\omega}=\vec{F}$.
    \item[(II)] For all $n\in\omega$ and $x\in 2^{A(E_{q_\omega})\cap (n+1)}$, $H_{q_\omega}(n)(x)=\lim_{k\in\omega} H_{q_k}(n)(x)$.
\end{enumerate}

We need to prove that $q_\omega$ is a condition in $\mathbb{Q}(\mathscr{I})$ and a lower bound of the sequence $\langle q_n:n\in\omega\rangle$. This clearly is sufficient to finish the proof.

Clauses i) and ii) from Definition \ref{shelah_forcing} are immediate (clause ii) holds because the limit of clause (II) above is well defined). For clause iii), pick $a_k^{q_\omega}\in A(E_{q_\omega})$, note that $a_k^{q_\omega}=a_{k}^{q_{k+1}}$. Moreover, for all $l\ge k+1$, and any $j\leq k+1$ $a_j^{q_\omega}/E_{q_\omega}=a_{j}^{q_{\omega}}/E_{q_{l}}$. This in turn implies that for $l\ge k+1$, $dom(H_{q_l}(a_k^{q_\omega}))=dom(H_{q_{k+1}}(a_k^{q_\omega}))=2^{A(E_{q_\omega})\cap (a_{k}^{q_\omega}+1)}$. From this it follows that for $l\ge k+1$,
\begin{equation*}
    H_{q_l}(a_{k}^{q_\omega})(x)=e_{{a_k^{q_{k+1}}}}(x)=x(a_{k}^{q_{k+1}})
\end{equation*}
By definition (II) of $H_{q_\omega}(a_k^{q_\omega})$, we have $H_{q_\omega}(a_k^{q_\omega})=x(a_k^{q_{\omega}})=e_{a_k}^{q_\omega}(x)$. It is also easy to see that this implies that clause iv) from Definition \ref{shelah_forcing} is also satisfied.

Fix a condition $q_n$, we want to prove that $q_\omega\leq q_n$. Note that $\vec{E}_{q_\omega}\leq \vec{E}_{q_n}$ follows the definition of $\vec{E}_{q_\omega}$. To check clause (2) from Definition \ref{shelah_forcing}, pick $l\in dom(E_{q_\omega})$ and note that for big enough $k\ge \max(n+1,l+1)$, $A(E_{q_\omega})\cap (l+1)=A(E_{q_k})\cap (l+1)$, which implies that for big enough $k\ge \max(n+1,l+1)$
\begin{equation*}
    H_{q_\omega}(l)(x)=H_{q_k}(l)(x)
\end{equation*}
It is not hard to see that this implies that the the complete requirement of clause (2) holds. To check that clause (3) from Definition \ref{shelah_forcing} holds, pick $l\notin dom(\vec{E}_{q_n})$, and note in addition that for big enough $k\ge\max(n+1,l+1)$, it holds that 
\begin{equation*}
 H_{q_\omega}(l)(x)=H_{q_k}(l)(x)=H_{q_n}(l)(x_{l,q_k\to q_n}) 
\end{equation*}
where the last inequality follows from $q_l\leq q_n$. Therefore, $$H_{q_\omega}(l)(x)=H_{q_n}(l)(x_{l,q_{\omega}\to q_n})$$ since $x_{l,q_{\omega}\to q_n}=x_{l,q_{k}\to q_n}$ for big enough $k\in\omega$. 

Finally, note that for each $s\in 2^n$, $q_\omega^s\leq q_n^s$ decides a value for $\dot{f}(n)$, so $q_\omega$ bounds the possible values of $\dot{f}(n)$ to a set of size at most $2^n$.
\end{proof}

\begin{prp}
If $\mathscr{I}$ is a saturated ideal on $\omega$, then $\mathbb{Q}(\mathscr{I})$ is proper.
\end{prp}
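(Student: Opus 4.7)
The plan is to parallel the proof of the Sacks property in Proposition \ref{sacks_property}, substituting the task of deciding $\dot{f}(n+1)$ by the task of deciding, along each branch $s\in 2^{n+1}$, which element of a given maximal antichain of $\mathbb{Q}(\mathscr{I})$ in $M$ lies in $\dot{G}$. Fix a sufficiently large regular $\theta$, a countable elementary submodel $M\prec H(\theta)$ with $\mathbb{Q}(\mathscr{I}),p\in M$, and in $V$ enumerate as $\langle A_n:n\in\omega\rangle$ the maximal antichains of $\mathbb{Q}(\mathscr{I})$ that belong to $M$; for each $n$ let $\dot{x}_n\in M$ be the canonical name for the unique element of $A_n\cap\dot{G}$. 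The goal is to produce $q\leq p$ that is $(M,\mathbb{Q}(\mathscr{I}))$-generic.

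I would define a strategy $\Sigma$ for Player I exactly as in the proof of Proposition \ref{sacks_property}: Player I maintains a decreasing sequence $\langle q_n:n\in\omega\rangle\subseteq M$, starting with $q_0\leq p$ chosen via Lemma \ref{initial_guessing} so that $q_0^s$ decides $\dot{x}_0$ for each $s\in 2^1$; plays $\vec{E}^1_n=\vec{E}_{q_n}*(n+1)$; and on receiving Player II's move $\vec{E}^2_n$, sets $\vec{G}_n=\{dom(\vec{E}^1_n)\setminus dom(\vec{E}^2_n)\}\cup\vec{E}^2_n$ and uses Lemma \ref{initial_guessing} inside $M$ to produce $q_{n+1}\leq_n q_n*\vec{G}_n$ with $q_{n+1}^s$ deciding $\dot{x}_n$ for every $s\in 2^{n+1}$. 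Since every step of this construction is an $M$-absolute application of Lemma \ref{initial_guessing} with parameters in $M$, the strategy $\Sigma$ can be coded inside $M$; hence by Lemma \ref{saturated_strategy} together with elementarity of $M\prec H(\theta)$, there is a play $\langle\vec{E}^2_n:n\in\omega\rangle\in M$ against $\Sigma$ in which Player II wins, so $\bigcup_n(dom(\vec{E}^1_n)\setminus dom(\vec{E}^2_n))\in\mathscr{I}^*$ and $\langle q_n\rangle\subseteq M$.

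The fusion limit $q_\omega$ is then assembled in $V$ exactly as in the proof of Proposition \ref{sacks_property}: $\vec{E}_{q_\omega}=\langle F_n:n\in\omega\rangle$ with $F_n=dom(\vec{E}^1_n)\setminus dom(\vec{E}^2_n)$, and $H_{q_\omega}(n)(x)=\lim_k H_{q_k}(n)(x)$. The verification that $q_\omega$ is a condition in $\mathbb{Q}(\mathscr{I})$ with $q_\omega\leq q_n$ for every $n$ is verbatim that of Proposition \ref{sacks_property}. For $(M,\mathbb{Q}(\mathscr{I}))$-genericity: fix $n$ and $s\in 2^{n+1}$; since $q_{n+1},A_n\in M$, the unique $r_{s,n}\in A_n$ with $q_{n+1}^s\leq r_{s,n}$ is $M$-definable from $q_{n+1},s,A_n$ and hence lies in $M$; therefore $q_\omega^s\leq q_{n+1}^s$ forces $\dot{x}_n=r_{s,n}\in A_n\cap M$. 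Since $\{q_\omega^s:s\in 2^{n+1}\}$ is a maximal antichain below $q_\omega$ (by the argument of Lemma \ref{initial_guessing}), it follows that $A_n\cap M$ is predense below $q_\omega$, so $q_\omega$ is $(M,\mathbb{Q}(\mathscr{I}))$-generic.

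The main technical obstacle is arranging that the strategy $\Sigma$ lies in $M$, since its naive formulation uses the $V$-enumeration $\langle A_n:n\in\omega\rangle$ which need not belong to $M$; once $\Sigma$ is recoded inside $M$ using $M$-absolute bookkeeping and the fact that Lemma \ref{initial_guessing} is itself a $\mathsf{ZFC}$ theorem, elementarity of $M$ transfers Lemma \ref{saturated_strategy} into $M$ to deliver the Player II winning play inside $M$ that makes the above construction go through.
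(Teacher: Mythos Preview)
Your overall approach mirrors the paper's: decide along each branch, via Lemma \ref{initial_guessing}, which element of the $n$-th maximal antichain in $M$ lies in the generic, play the game $\mathscr{G}(\mathscr{I})$ to produce a fusion sequence with all $q_n\in M$, and take the limit $q_\omega$. The verification of genericity in your third paragraph is correct.

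The gap is in your handling of the game. You propose to code the strategy $\Sigma$ as an element of $M$ and then invoke Lemma \ref{saturated_strategy} inside $M$ by elementarity. You correctly flag that $\Sigma$ as written depends on the external enumeration $\langle A_n:n\in\omega\rangle$, which cannot lie in $M$; but your proposed fix, ``recode $\Sigma$ inside $M$ using $M$-absolute bookkeeping,'' does not work. Any strategy living in $M$ can refer only to objects $M$ can enumerate, and $M$ cannot enumerate its own maximal antichains (it does not know it is countable). There is no way to produce an $M$-internal strategy that is guaranteed to hit every antichain of $M$; this is precisely the obstacle, and it cannot be dissolved by recoding.

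The paper resolves this differently, and more simply: it does \emph{not} try to place $\Sigma$ in $M$. Instead it considers the variant of $\mathscr{G}(\mathscr{I})$ in which both players are constrained to play $\mathscr{I}$-partitions from $M$, and observes that the proof of Lemma \ref{saturated_strategy} goes through verbatim, in $V$, for this restricted game: Player II's moves in that proof are just copies of Player I's previous move, hence automatically in $M$, and the resulting uncountable antichain in $\mathcal{P}(\omega)/\mathscr{I}$ still contradicts saturation. So Player I has no winning strategy in the restricted game; your externally defined $\Sigma$ (which by elementarity and Lemma \ref{initial_guessing} does produce moves in $M$ at each step) is therefore defeated by some run with every $\vec{E}^2_n\in M$, giving $q_n\in M$ for all $n$. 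From that point your argument is fine.
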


\begin{proof}
The argument completely follows similar lines to the proof of Proposition \ref{sacks_property}, with small modifica\-tions. Let $\mathcal{M}\prec H(\theta)$ be a countable elementary submodel such that $\mathbb{Q}(\mathscr{I})\in\mathcal{M}$, and pick a condition $p\in\mathcal{M}\cap\mathbb{Q}(\mathscr{I})$. Let $\langle\dot{\alpha}_n:n\in\omega\rangle$ be an enumeration of all the $\mathbb{Q}(\mathscr{I})$-names in $\mathcal{M}$ for ordinals. Then apply the construction of Lemma \ref{sacks_property}, with the modifica\-tion that instead of guessing $\dot{f}(n)$ at step $n$, we are guessing the values of the name $\dot{\alpha}_n$, plus the fact that all the steps of the recursion can be done inside $\mathcal{M}$, that is, the $\mathscr{I}$-partitions played by both players are in $\mathcal{M}$; the same proof of Lemma \ref{saturated_strategy} shows that in this variation of the game $\mathscr{G}(\mathscr{I})$ Player I has no winning strategy (the only modification is that now the strategy $\sigma$ shoots $\mathscr{I}$-partitions living in $\mathcal{M}$), so we can get the condition $q_\omega$ of Proposition \ref{sacks_property}. Then the condition $q_\omega$ is an $(\mathcal{M},\mathbb{Q}(\mathscr{I}))$-generic condition.
\end{proof}

\begin{dfn}
Let $\mathcal{F}$ and $\mathcal{G}$ be filters on $\omega$. We say that $\mathcal{F}$ and $\mathcal{G}$ are nearly coherent if there is a finite to one function $f:\omega\to\omega$ such that $f(\mathcal{F})\cup f(\mathcal{G})$ generates a filter.
\end{dfn}

Note that if we have that $\mathcal{F}$ and $\mathcal{G}$ are not nearly coherent, then both filters are non-meager.

\begin{dfn}
Let $\mathcal{F}$ and $\mathcal{G}$ be filters on $\omega$. The game $\mathscr{G}(\mathcal{F},\mathcal{G})$ between Player I and Player II, is defined as follows: a complete round of the game consist of four movements, defined as... at round number $n$, both players follow the next pattern:

\begin{enumerate}
    \item After Player II has answered with a natural number $k_{n-1}$ (if $n=0$, we make $k_{-1}=-1$), Player I plays a natural number $m^0_n>m^0_{n-1}$.
    \item Then Player II answers with a natural number $m_n^1>m^0_{n}$.
    \item Then Player I answers with a set $A_n\in\mathcal{G}$.
    \item Finally, Player II answers with a natural number $k_n\in A_n$.
\end{enumerate}
Player II wins if and only if $$\bigcup_{n\in\omega}[m_n^0,m_n^1)\in\mathcal{F}\text{      and      }\{k_n:n\in\omega\}\in\mathcal{G}$$
\end{dfn}

We need the following lemma from  \cite{todd}

\begin{lemma}[Eisworth, see \cite{todd}]\label{todd_lemma}
Let $\mathcal{F},\mathcal{G}$ be filters which are not nearly coherent. Let $\mathcal{I}=\{I_n\colon n\in\omega\}$ be an interval partition of $\omega$. Then there is an interval partition $\mathcal{J}=\{J_n\colon n\in\omega\}$ such that each
	 $J_n$ is a union of elements of $\mathcal{I}$, and
\begin{equation*}
    \bigcup_{k\in\omega} J_{4k}\in\mathcal{F}\text{   and   }    \bigcup_{k\in\omega} J_{4k+2}\in\mathcal{G}
\end{equation*}
\end{lemma}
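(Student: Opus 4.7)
The plan is to extract, from the non-near-coherence of $\mathcal{F}$ and $\mathcal{G}$ applied three times against carefully chosen finite-to-one functions, a pair $A\in\mathcal{F}$ and $B\in\mathcal{G}$ whose $\mathcal{I}$-shadows
\[
U=\{n:I_{n}\cap A\neq\emptyset\},\qquad V=\{n:I_{n}\cap B\neq\emptyset\}
\]
are not merely disjoint but contain no adjacent pair of indices; once such $U,V$ are in hand, the partition $\mathcal{J}$ is produced by grouping consecutive $I_{n}$'s into maximal runs of common type and using the gap between opposite-type runs as a buffer block.

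I would take $f_{0}(x)=n$ for $x\in I_{n}$, $f_{1}(x)=\lfloor n/2\rfloor$, and $f_{2}(x)=\lfloor (n+1)/2\rfloor$. All three are finite-to-one: $f_{1}$ glues $I_{2k}$ with $I_{2k+1}$, while $f_{2}$ glues $I_{2k-1}$ with $I_{2k}$. For each $i\in\{0,1,2\}$ non-near-coherence gives $A_{i}\in\mathcal{F}$ and $B_{i}\in\mathcal{G}$ with $f_{i}(A_{i})\cap f_{i}(B_{i})=\emptyset$; set $A=A_{0}\cap A_{1}\cap A_{2}$ and $B=B_{0}\cap B_{1}\cap B_{2}$. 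Disjointness of $U$ and $V$ comes from the $f_{0}$-choice. For non-adjacency, suppose $n\in U$ and $n+1\in V$: if $n$ is even, then $\{n,n+1\}\subseteq f_{1}^{-1}(\lfloor n/2\rfloor)$ forces $\lfloor n/2\rfloor\in f_{1}(A)\cap f_{1}(B)$, contradicting the choice of $A_{1},B_{1}$; if $n$ is odd, the symmetric argument with $f_{2}$ contradicts $A_{2},B_{2}$.

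With $U,V$ disjoint and non-adjacent, I would enumerate $U\cup V$ in increasing order as $w_{0}<w_{1}<\cdots$, cut this enumeration into maximal runs of common type, and place each run in its own block of $\mathcal{J}$; between consecutive runs of opposite type, slot in a buffer block whose index range is the interval strictly between the end of the former run and the start of the latter, which is non-empty precisely because of non-adjacency. After possibly trimming $B$ by its first $V$-run, using freeness of $\mathcal{G}$, so that the opening run is of $U$-type, and absorbing any indices below $w_{0}$ into $J_{0}$, labelling the blocks $0,1,2,3,0,1,2,3,\dots$ in the natural order places the $U$-runs on the indices $\{4k:k\in\omega\}$ and the $V$-runs on $\{4k+2:k\in\omega\}$. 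Consequently $\bigcup_{k}J_{4k}\supseteq\bigcup_{n\in U}I_{n}\supseteq A\in\mathcal{F}$ and $\bigcup_{k}J_{4k+2}\supseteq\bigcup_{n\in V}I_{n}\supseteq B\in\mathcal{G}$, as required.

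The hardest point, and the whole reason for using non-near-coherence three times, is securing the non-adjacency of $U$ and $V$: a single application supplies only disjointness, and an adjacent $u\in U$, $v\in V$ would leave no index free for the non-empty buffer block demanded by the gap between $4k$ and $4k+2$ in the conclusion. The three-pronged application — $f_{0}$ for disjointness, $f_{1}$ to rule out adjacencies at even $n$, and $f_{2}$ to rule them out at odd $n$ — is what forces the gap; once the gap is in place, building $\mathcal{J}$ is purely combinatorial bookkeeping modulo the edge case near the start of $\omega$.
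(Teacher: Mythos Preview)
The paper does not supply a proof of this lemma; it is quoted from Eisworth and used as a black box in the proof of Proposition~\ref{second_game_lemma}. So there is no in-paper argument to compare against, and your proof stands on its own.

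Your argument is correct. The triple application of non-near-coherence via $f_0,f_1,f_2$ to force the index sets $U,V$ to be disjoint \emph{and} non-adjacent is exactly the right move, and once that is secured the run-and-buffer construction of $\mathcal{J}$ goes through as you describe. Two small points are left implicit but are easily filled in: first, the non-adjacency case $n\in V,\ n+1\in U$ is handled by the identical computation with the roles of $A$ and $B$ interchanged; second, you are using that both filters are free (so that $U$ and $V$ are infinite, every run is finite, and trimming $B$ by a finite set keeps it in $\mathcal{G}$), which follows immediately from non-near-coherence, since a principal filter is nearly coherent with every filter.
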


\begin{prp}\label{second_game_lemma}
Let $\mathcal{F}$ and $\mathcal{G}$ be two filters which are not nearly coherent. Further, assume that $\mathcal{G}$ is a selective filter. Then Player I has no winning strategy in the game $\mathscr{G}(\mathcal{F},\mathcal{G})$.    
\end{prp}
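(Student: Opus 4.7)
The plan is to argue by contradiction: assume $\Sigma$ is a winning strategy for Player~I, and construct a play against $\Sigma$ in which Player~II wins. The construction has three layers: a recursive ``catch partition'' $\mathcal{I}$ of $\omega$ extracted from $\Sigma$; Lemma~\ref{todd_lemma}, applied to $\mathcal{I}$, yielding a coarsening $\mathcal{J}$ with $\bigcup_{k}J_{4k}\in\mathcal{F}$ and $\bigcup_{k}J_{4k+2}\in\mathcal{G}$; and the selectivity of $\mathcal{G}$, used to produce a partial selector $K\in\mathcal{G}$ of $\mathcal{I}$ from which Player~II's $k_{n}$'s will be drawn.

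Concretely, I recursively define $0=l_{0}<l_{1}<\cdots$ together with a finite tree of partial plays of length $n$ against $\Sigma$ whose numerical moves are bounded by $l_{n}$. Having defined $l_{0},\dots,l_{n}$ and the $n$-th level of the tree, I enumerate all finitely many Player~II extensions $(m_{n}^{1},k_{n})$ with $m_{n}^{1}\in(m_{n}^{0},l_{n+1})$ and $k_{n}\in A_{n}\cap[l_{n},l_{n+1})$; the strategy $\Sigma$ assigns to each such extension a next move $m_{n+1}^{0}$. I then choose $l_{n+1}$ large enough that every such $m_{n+1}^{0}$ is bounded by $l_{n+1}$ and that each relevant $A_{n}$ actually meets $[l_{n},l_{n+1})$---possible because the $A_{n}$'s are infinite. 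Simultaneously, I select a marker $k_{n}^{\star}\in A_{n}\cap[l_{n},l_{n+1})$ for each extension, collecting these markers into a set $K$ satisfying $|K\cap I_{n}|\le 1$; the $q$-filter property of the selective filter $\mathcal{G}$ lets me arrange, after passing to a subfamily, that $K\in\mathcal{G}$.

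Applying Lemma~\ref{todd_lemma} to $\mathcal{I}$ produces $\mathcal{J}=\{J_{k}\}$ with the alternating $\mathcal{F}/\mathcal{G}$ property, and I may further refine $K$ so that $K\subseteq^{*}\bigcup_{k}J_{4k+2}$. Player~II now plays as follows: at round $n$, upon seeing Player~I's move $m_{n}^{0}$, Player~II replies with $m_{n}^{1}$ equal to the right endpoint of the nearest upcoming $J_{4k}$-block, so that $[m_{n}^{0},m_{n}^{1})$ covers the tail of the current block together with all intervening blocks up through that $J_{4k}$; after $\Sigma$ responds with $A_{n}$, Player~II picks $k_{n}\in A_{n}\cap K$, a choice pre-committed to in the first phase. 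The union $\bigcup_{n}[m_{n}^{0},m_{n}^{1})$ then contains $\bigcup_{k}J_{4k}$ modulo a small initial segment, so it belongs to $\mathcal{F}$, while $\{k_{n}\}$ agrees with $K$ modulo finite, so it belongs to $\mathcal{G}$. Both winning conditions for Player~II are satisfied, contradicting the assumption.

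The main obstacle is the interlocking of the catch property with the $\mathcal{G}$-selector in the first phase: the recursive construction must simultaneously catch Player~I's next move by $l_{n+1}$ across all relevant extensions, guarantee the markers $k_{n}^{\star}$ lie in $A_{n}\cap I_{n}$, and ensure these markers assemble into a set $K\in\mathcal{G}$. Both the $p$- and $q$-filter aspects of selectivity are essential here: the former to diagonalize through the $A_{n}$'s and the latter to identify $K$ with a partial selector of $\mathcal{I}$ living in the filter.
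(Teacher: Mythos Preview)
Your overall strategy is the paper's: build a catch partition from $\Sigma$, apply Lemma~\ref{todd_lemma}, and use the selectivity of $\mathcal{G}$ to produce Player~II's moves. The paper, however, orders the ingredients differently, and this matters precisely at the point you yourself flag as the ``main obstacle.'' In the paper the $p$-filter property of $\mathcal{G}$ is invoked \emph{before} the catch-partition recursion: since $\Sigma$ is countable, only countably many sets $A\in\mathcal{G}$ ever occur as third moves, and one first fixes a single pseudointersection $A_\omega\in\mathcal{G}$ of all of them. The recursion then only needs to guarantee $A_\omega\cap[l_n,l_{n+1})\neq\emptyset$ together with the catch property; the $q$-filter property is applied \emph{after} Lemma~\ref{todd_lemma} to obtain a selector of $\mathcal{J}$ in $\mathcal{G}$, and the $k_n$'s come from intersecting that selector with $A_\omega$ and $\bigcup_k J_{4k+2}$.

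Your description of the first phase does not quite close. You say you pick a marker $k_n^{\star}\in A_n\cap[l_n,l_{n+1})$ \emph{for each extension} and then collect these into a set $K$ with $|K\cap I_n|\le 1$. But at level $n$ there are many partial plays, each with its own $A_n$ supplied by $\Sigma$; either $K$ contains many points in $I_n$, or you are implicitly demanding a single point in the intersection of all those $A_n$'s---and that is exactly what the pseudointersection $A_\omega$ delivers, which is why the paper secures it first. There is also a circularity in your indexing: you enumerate extensions with $m_n^{1}\in(m_n^{0},l_{n+1})$ and $k_n\in[l_n,l_{n+1})$ while $l_{n+1}$ is the very number you are in the process of defining. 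Once $A_\omega$ is fixed in advance these issues disappear, and your second and third layers then proceed essentially as in the paper.
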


\begin{proof}
Let $\Sigma$ be a strategy for Player I, we need to produced a run of the game in which Player I follows $\Sigma$ but Player II wins. We think of $\Sigma$ as the family sequences of possible and allowed moves of both players, so we identify $\Sigma$ with a tree with the following properties: for $s\in \Sigma$ and $i<\vert s\vert$
\begin{enumerate}
    \item It has a root node $\langle m_0^0\rangle$, which is the starting move of Player I.
    \item If $i=4N$ for some $N$, then $s(i)\in\omega$ is the movement of Player I determined by $\Sigma$. The successors of $s\upharpoonright(i+1)$ is the set of all the possibilities that Player II is allowed to play, which is the set of all natural numbers bigger than $s(i)$.
    
    \item If $i=4N+2$ for some $N$, then $s(i)\in\mathcal{G}$ is the movement of Player $I$ determined by $\Sigma$. The successors of $s\upharpoonright(i+1)$ is the set of all the possibilities that Player II is allowed to play, which is the set of all natural numbers which are elements of $s(i)$.
\end{enumerate}
For $n\in\omega$, define $\Sigma^{n}=\{s\in\Sigma:\vert s\vert=n+1\}$. For $j\in 4$, let $\Sigma^{[j]}=\bigcup_{k\in\omega}\Sigma^{4k+j}$
Also, if $s\in\Sigma$ is a state of the game in which last movement was given by Player II, $\Sigma(s)$ denotes the answer of Player I to the last movement of Player II in $s$, according to $\Sigma$.

For $n\in\omega$, defined the following sets

$$\mathcal{E}_n=\{s\in\Sigma^{[2]}:(\forall i<\vert s\vert)(\text{if } 4i+1<\vert s\vert\Longrightarrow s(4i+1)\leq n)\}$$

Each one of this sets is finite. Now, since $\Sigma$ is a countable set, we have that the family of sets $A\in\mathcal{G}$ which appear in some $s\in \Sigma$ is countable, so we can find $A_\omega\in\mathcal{G}$ which is a pseudointersection of all of them. We construct two sequences of natural numbers $\langle l_n:n\in\omega\rangle$ and $\langle \alpha_n:n\ge 2\rangle$ as follows:
\begin{enumerate}
    \item $l_0=\Sigma(\emptyset)=m_0^0$ and $l_1>l_0$ is such that $A_\omega\cap [l_0,l_1)\neq\emptyset$.
    \item For each $j\in (l_0,l_1]$, $\langle l_0,j\rangle$ is an allowed state of the game in which last movement was given by Player II, so $\Sigma(\langle l_0,j\rangle)=A_j$ is well defined. Choose $\alpha_2>l_1$ such that for each $j\in(l_0,l_1]$, $A_j\cap \alpha_2\neq\emptyset$. Now pick $l_2>\alpha_2$ such that for each $j\in(l_0,l_1]$ and $i\in A_j\cap \alpha_2$, $\Sigma(l_0,j,A_j,i)\leq l_2$.
    \item If $l_n\ge l_2$ has been defined, define $l_{n+1}>\alpha_{n+1}>l_n$ such that the following holds:
    \begin{enumerate}    
        \item $\alpha_{n+1}>l_n$ is such that for each $s\in\mathcal{E}_{l_n}$ and $i<\vert s\vert$ such that $4i+2<\vert s\vert$, there is $j\in s(4i+2)\cap[\alpha_n,\alpha_{n+1})$. 
        
        \item We require $l_{n+1}>\alpha_{n+1}$ to satisfy that for each $s\in\mathcal{E}_{l_{n}}$ and $i<\vert s\vert$ such that $4i+2<\vert s\vert$, and $j\in s(4i+2)\cap\alpha_{n+1}$, $\Sigma(s\upharpoonright(4i+3)^\frown j)<l_{n+1}$.
        
        \item Define $H_{n+1}=\{s(4i+2):s\in \mathcal{E}_{l_{n}}\land 4i+2<\vert s\vert\}$. Then, for all $A\in H_{n+1}$, $A\setminus l_{n+1}\subseteq A_{\omega}$.
        
        \item $A_\omega\cap[l_n,l_{n+1})\neq\emptyset$.
    \end{enumerate}
\end{enumerate}

Then we get an interval partition $\langle I_n=[l_n,l_{n+1}):n\in\omega\rangle$ and by Lemma \ref{todd_lemma}, we can find and interval partition $\mathcal{J}$ such that each element of $\mathcal{J}$ is union of intervals $I_n$, and,
$$\bigcup_{k\in\omega}J_{4k}\in\mathcal{F}\text{      and      }\bigcup_{k\in\omega}J_{4k+2}\in\mathcal{G}$$ Let $B\in\mathcal{G}$ be a selector for $\mathcal{J}$ and define $B_\omega=B\cap A_\omega\cap\bigcup_{k\in\omega}J_{4k+2}$ and let $\{b_n:n\in\omega\}$ be its increasing enumeration. Note that since we assume $A_\omega\cap[l_n,l_{n+1})\neq\emptyset$ for all $n\in\omega$, we can assume that $B_\omega\cap J_{4n+2}\neq\emptyset$ for all $n\in\omega$. Let also $\langle \eta_k:k\in\omega\rangle$ be the increasing sequence such that $J_k=[\eta_k,\eta_{k+1})$, and $\langle \epsilon_k:k\in\omega\rangle$ be such that $\eta_k=l_{\epsilon_k}$. Now we defined the following run of the game:
\begin{enumerate}
    \item Player I starts by playing $m_0^0=l_0$ and let Player II play $m_0^1=\eta_1=l_{\epsilon_1}$. Note that $[m_0^0,m_0^1)=J_0$. Define $A_0=\Sigma(m_0^0,m_0^1)$, so $A_0\in H_{{\epsilon_1}+1}$ (since ${m_0^0}^\frown{m_1^0}^\frown A_0\in\mathcal{E}_{l_{\epsilon_1}}$), which implies that $A_\omega\setminus l_{\epsilon_1+1}\subseteq A_0$. Note that $b_0\ge \min(J_2)=l_{\epsilon_2}\ge l_{\epsilon_1+1}$, so Player II can play $b_0$. This finishes the first round of the game.
    \item Assume we have constructed a partial run of the game $s$ such that $\vert s\vert=4n$ for some positive $n\in\omega$. Then the last movement was given by Player II and corresponds to Player II playing $b_{n-1}\in J_{4(n-1)+2}$. Then we have $b_{n-1}\in [l_{j},l_{j+1})$ for some $j+1\leq \epsilon_{4(n-1)+3}$, which implies that $b_{n-1}<l_{j+1}<\alpha_{j+2}$, which means that $\Sigma(s)=m_{n}^0<l_{j+2}\leq l_{\epsilon_{4(n-1)+4}}=l_{\epsilon_{4n}}=\eta_{4n}<\eta_{4n+1}$, and Player II can play $m_n^1=\eta_{4n+1}$. Define $A_n=\Sigma(s^\frown {m_n^0}^\frown m_n^1)$, and note that $A_n\in H_{\epsilon_{4n+1}+1}$ (since $s^\frown {m_n^0}^\frown {m_n^1}^\frown A_n\in\mathcal{E}_{l_{\epsilon_{4n+1}}}$), which implies that $A_\omega\setminus l_{\epsilon_{4n+1}+1}\subseteq A_n$. Since $b_n\in J_{4n+2}\cap B_\omega$ and $\min(J_{4n+2})=\eta_{4n+2}=l_{\epsilon_{4n+2}}\ge l_{\epsilon_{4n+1}+1}$, we have that $b_n\in A_n$, so Player II can play $b_n\in A_n$. This finishes the definition of the current round, which results in $s^\frown {m_n^0}^\frown{m_n^1}^\frown {A_n}^\frown b_n$. Note that $J_{4n}\subseteq[m_n^0,m_n^1)$.
\end{enumerate}
Note that after this run of the game, $\bigcup_{k\in\omega}J_{4k}\subseteq \bigcup_{k\in\omega}[m_k^0,m_k^1)$ and the second set constructed by Player II is exactly $B_\omega$. Thus, Player II wins.
\end{proof}

\begin{dfn}
Let $\mathcal{F}$ and $\mathcal{G}$ be two filters on $\omega$. Let $\vec{E}$ be an $\mathcal{F}^*$-partition. Define the game $\mathscr{G}_{\vec{E}}(\mathcal{F},\mathcal{G})$ as the game $\mathscr{G}(\mathcal{F},\mathcal{G})$, with the exception that Player II wins if and only if $$\bigcup_{n\in\omega}\bigcup_{j\in[m_n^0,m_n^1)}E_j\in\mathcal{F}\text{         and         }\{k_j:j\in\omega\}\in\mathcal{G}$$
\end{dfn}

\begin{crl}\label{third_game_lemma}
Assume $\mathcal{F}$ and $\mathcal{G}$ filters. Further, assume $\mathcal{G}$ is a selective filter, and let $\vec{E}$ be an $\mathcal{F}^*$-partition. Define a function ${h}: dom(\vec{E})\to\omega$ as $h(k)=n$ if and only if $k\in E_n$. Assume $h(\mathcal{F})$ and $\mathcal{G}$ are not nearly coherent. Then Player I has no winning strategy in the game $\mathscr{G}_{\vec{E}}(\mathcal{F},\mathcal{G})$.
\end{crl}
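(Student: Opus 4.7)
The plan is to reduce this corollary directly to Proposition \ref{second_game_lemma} applied to the pushforward filter $h(\mathcal{F})$ and the selective filter $\mathcal{G}$. The key observation is that the game $\mathscr{G}_{\vec{E}}(\mathcal{F},\mathcal{G})$ is literally the game $\mathscr{G}(h(\mathcal{F}),\mathcal{G})$ in disguise, once one translates the winning condition through $h$.

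First, I would verify the standard pushforward identity. Since $dom(\vec{E}) \in \mathcal{F}$ (as $\omega \setminus dom(\vec{E})$ lies in the dual ideal of $\mathcal{F}$) and $h^{-1}(\{n\}) = E_n$, for every $X \subseteq \omega$ we have $h^{-1}(X) = \bigcup_{n \in X} E_n$, and consequently
\[
 X \in h(\mathcal{F}) \iff h^{-1}(X) \in \mathcal{F}.
\]
In particular, taking $X = \bigcup_n [m_n^0, m_n^1)$, the winning condition for Player II in $\mathscr{G}_{\vec{E}}(\mathcal{F},\mathcal{G})$, namely $\bigcup_n \bigcup_{j \in [m_n^0,m_n^1)} E_j \in \mathcal{F}$, is equivalent to $\bigcup_n [m_n^0, m_n^1) \in h(\mathcal{F})$, while the condition $\{k_n : n \in \omega\} \in \mathcal{G}$ is identical in the two games.

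Next, I would note that the legal moves in $\mathscr{G}_{\vec{E}}(\mathcal{F},\mathcal{G})$ and $\mathscr{G}(h(\mathcal{F}),\mathcal{G})$ coincide verbatim: Player I plays natural numbers $m_n^0$ and sets $A_n \in \mathcal{G}$, and Player II plays natural numbers $m_n^1 > m_n^0$ and selectors $k_n \in A_n$. Hence every strategy for Player I in one game is at the same time a strategy in the other, and by the equivalence of winning conditions above, it is winning in one if and only if it is winning in the other.

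Finally, since $\mathcal{G}$ is selective and $h(\mathcal{F}),\mathcal{G}$ are not nearly coherent, Proposition \ref{second_game_lemma} guarantees that Player I has no winning strategy in $\mathscr{G}(h(\mathcal{F}),\mathcal{G})$; by the identification above, Player I has no winning strategy in $\mathscr{G}_{\vec{E}}(\mathcal{F},\mathcal{G})$ either. There is essentially no obstacle here: the entire content of the corollary is the translation identity $h^{-1}(X)\in\mathcal{F}\iff X\in h(\mathcal{F})$, which is immediate from $dom(\vec{E})\in\mathcal{F}$, so the proof reduces to unwrapping definitions and citing the previous proposition.
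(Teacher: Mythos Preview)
Your argument is correct and follows exactly the same approach as the paper: both identify a strategy for Player~I in $\mathscr{G}_{\vec{E}}(\mathcal{F},\mathcal{G})$ with a strategy in $\mathscr{G}(h(\mathcal{F}),\mathcal{G})$ and then invoke Proposition~\ref{second_game_lemma}. The paper's proof is simply a terser version of what you wrote, omitting the explicit verification of the pushforward identity and the equivalence of winning conditions that you spell out.
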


\begin{proof}
If $\Sigma$ is a winning strategy for Player I in the game $\mathscr{G}_{\vec{E}}(\mathcal{F},\mathcal{G})$, then $\Sigma$ is a winning strategy in the game $\mathscr{G}(h(\mathcal{F}),\mathcal{G})$. However, $h(\mathcal{F})$ and $\mathcal{G}$ are not nearly coherent.
\end{proof}

\begin{dfn}
Let $\mathscr{I}$ be an ideal on $\omega$. We say that a function $f:\omega\to\omega$ is $\mathscr{I}$-to-one if and only if for any $n\in\omega$, $f^{-1}[\{n\}]\in\mathscr{I}$.
\end{dfn}

Theorem \ref{main_section_theorem} below is the main theorem of this section.

\begin{thm}\label{main_section_theorem}
Let $\mathscr{I}$ be a saturated ideal on $\omega$ and let $\mathcal{F}$ be a selective rational filter such that for no $\mathscr{I}$-to-one function
$f$, $\mathcal{F}\cup f(\mathscr{I}^*)$ generates a filter. Then $\mathbb{Q}(\mathscr{I})$ forces that $\mathcal{F}$ generates a 
selective rational filter in the generic extension.
\end{thm}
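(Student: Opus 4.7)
The plan is to prove separately the three properties that witness $\mathcal{F}$ generating a selective rational filter in $V[G]$: (i) rationality (maximality among dense subsets of $\mathbb{Q}$ under $\subseteq^*$); (ii) the $p$-filter property; and (iii) the $q$-filter property. All three arguments share a common backbone, namely a fusion construction along the relations $\leq_n$ from Definition~\ref{shelah_forcing} coupled with the game $\mathscr{G}_{\vec{E}}(\mathscr{I}^*,\mathcal{F})$. The hypothesis of the theorem, that $\mathcal{F}\cup f(\mathscr{I}^*)$ never generates a filter for any $\mathscr{I}$-to-one $f$, is exactly the input needed to invoke Corollary~\ref{third_game_lemma}: for any $\mathscr{I}$-partition $\vec{E}$ with collapsing map $h$, the filters $h(\mathscr{I}^*)$ and $\mathcal{F}$ are not nearly coherent, so Player I has no winning strategy in $\mathscr{G}_{\vec{E}}(\mathscr{I}^*,\mathcal{F})$.

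For rationality, fix $p\in\mathbb{Q}(\mathscr{I})$ and a name $\dot{X}$ for an element of $Dense(\mathbb{Q})$. The goal is to produce $q\leq p$ and $A\in\mathcal{F}$ such that $q$ forces either $A\subseteq^*\dot{X}$ or $A\cap\dot{X}\notin Dense(\mathbb{Q})$. I would build a fusion sequence $\langle q_n:n\in\omega\rangle$ with $q_{n+1}\leq_n q_n$, while simultaneously describing a strategy $\Sigma$ for Player I in $\mathscr{G}_{\vec{E}_p}(\mathscr{I}^*,\mathcal{F})$. At stage $n$, after Player II has answered with $m_n^0<m_n^1$ and then with an integer $k_n$ drawn from the previous $\mathcal{F}$-move, I would invoke Lemma~\ref{initial_guessing} to refine $q_n$ to $q_{n+1}\leq_n q_n$ whose $2^{n+1}$ shifts $q_{n+1}^s$ each decide the statement ``$k_n\in\dot{X}$''; the next $\mathcal{F}$-move prescribed by $\Sigma$ would be the set of candidates for Player II that remain ``compatible'' with both alternatives across those shifts. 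Since $\Sigma$ is not winning, some run produces $A=\{k_n:n\in\omega\}\in\mathcal{F}$ together with an $\mathscr{I}^*$-block pattern from which a fusion limit $q_\omega$, constructed as in the proof of Proposition~\ref{sacks_property}, can be extracted. A final splitting of $A$ into two subsets in $\mathcal{F}$ according to which alternative was decided at each $k_n$, combined with selectivity of $\mathcal{F}$ to extract a homogeneous piece, delivers the desired dichotomy.

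For the $p$-filter property, I would use the Sacks property of Proposition~\ref{sacks_property}: given a name $\langle\dot{B}_n:n\in\omega\rangle$ for a decreasing sequence in the filter generated by $\mathcal{F}$, cover the associated names for ground-model witnesses $A_n\in\mathcal{F}$ with $A_n\subseteq^*\dot{B}_n$ by a slalom $\langle\mathcal{S}_n:n\in\omega\rangle\in V$ with $|\mathcal{S}_n|\leq 2^n$; the finite intersections $A_n^*=\bigcap\mathcal{S}_n$ then form an almost-decreasing sequence in $\mathcal{F}\cap V$, and the $p$-filter property of $\mathcal{F}$ in $V$ produces a pseudo-intersection $A\in\mathcal{F}$ which also works against $\langle\dot{B}_n\rangle$. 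The $q$-filter argument is analogous, applied to names for interval partitions of $\omega$ and using that $\mathcal{F}$ is a $q$-filter in $V$.

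The main obstacle will be the rationality step, specifically the bookkeeping needed to synchronize Player II's picks $k_n$ with the fusion refinements that decide whether $k_n\in\dot{X}$, and then ensuring that the fusion limit $q_\omega$ actually lies below the appropriate restriction of $p$ to the winning $\mathscr{I}^*$-partition blocks. This alignment is precisely the point at which one needs the variant game $\mathscr{G}_{\vec{E}}$ rather than the plain $\mathscr{G}$, and is also where selectivity of $\mathcal{F}$, rather than mere rationality, is used to thin $A$ to a set on which a single alternative holds.
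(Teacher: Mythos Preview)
Your overall framework---fusion along $\leq_n$ coupled with the game $\mathscr{G}_{\vec{E}_p}(\mathscr{I}^*,\mathcal{F})$---matches the paper, and your treatments of the $p$- and $q$-filter properties via the Sacks property are correct (the paper in fact argues only the maximality clause and leaves these implicit). The rationality argument, however, has a genuine gap.

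You propose to refine $q_n$ so that each shift $q_{n+1}^s$ \emph{decides} the statement ``$k_n\in\dot{X}$'', and then to ``split $A=\{k_n:n\in\omega\}$ into two subsets according to which alternative was decided at each $k_n$'', extracting a homogeneous piece by selectivity. But the decision is not a function of $n$ alone: distinct shifts $s\in 2^{n+1}$ may decide the membership of $k_n$ in $\dot{X}$ oppositely, so there is no single ``alternative at $k_n$'' on which to split. Selectivity of $\mathcal{F}$ homogenizes finite colorings of $A$, not colorings whose number of colors grows with $n$; and your description of Player~I's $\mathcal{F}$-move as ``the set of candidates compatible with both alternatives across those shifts'' does not name a concrete set in $\mathcal{F}$.

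The paper resolves this with a preliminary dichotomy you are missing. For $s\in 2^{<\omega}$ set $X_s=\{n\in\omega:(\exists q\leq p^s)\,q\Vdash n\in\dot{X}\}$. If some $X_s\notin\mathcal{F}$, choose $Z\in\mathcal{F}$ with $Z\cap X_s\notin Dense(\mathbb{Q})$; since $p^s\Vdash\dot{X}\subseteq X_s$, already $p^s$ forces $\dot{X}\cap Z\notin Dense(\mathbb{Q})$. Otherwise every $X_s\in\mathcal{F}$, and Player~I's $\mathcal{F}$-move at round $n$ is simply $A_n=\bigcap_{s\in 2^{m_n^1}}X_s\in\mathcal{F}$. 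For Player~II's reply $k_n\in A_n$ one then chooses, for each $s\in 2^{m_n^1}$, an extension $r_s\supseteq s$ with $p^{r_s}\Vdash k_n\in\dot{X}$, lets Player~I's next integer move dominate all the $|r_s|$, and builds $q_n$ by threading these $r_s$ into the $H$-component on the discarded block. Thus $q_n$---and hence the fusion limit $q_\omega$---forces $k_n\in\dot{X}$ outright, so that $q_\omega\Vdash\{k_n:n\in\omega\}\subseteq\dot{X}$ with $\{k_n:n\in\omega\}\in\mathcal{F}$. No post-processing or second appeal to selectivity is needed.
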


\begin{proof}
Let $\dot{X}$ be a name for a dense subset of the rational numbers and $p\in\mathbb{Q}_\mathscr{I}$ a condition. We have to find a condition $q\leq p$ such that
either, there is  $Z\in\mathcal{F}$ such that $q\Vdash Z\cap \dot{X}\notin Dense(\mathbb{Q})$, or $q\Vdash Z\subseteq \dot{X}$. By Proposition \ref{sacks_property} we can assume that for all $n\in\omega$ and $s\in 2^n$, $p^s$ decides the truth value of $n\in\dot{X}$.   

For each $s\in 2^{<\omega}$, let $X_s$ be the set $X_s=\{n\in\omega:\exists q\leq p^{s}\ q\Vdash n\in\dot{X}\}$. If there is $s\in 2^{<\omega}$
such that $X_s\notin\mathcal{F}$, then there is $Z\in\mathcal{F}$ such that $X_s\cap Z\notin Dense(\mathbb{Q})$. Since $p^{s}$ forces $\dot{X}$ to be a subset of $X_s$, then $\dot{X}\cap Z$ is forced by $p^s$ to be a not dense subset of the rational numbers, so we are done with this case and we can assume that $X_s\in\mathcal{F}$ for all $s\in 2^{<\omega}$. 

Define a function $f:dom(\vec{E}_p)\to\omega$ as $f(k)=n$ if and only if $k\in E_n$. Then $\mathcal{F}\cup f(\mathscr{I}^*)$ does not generates a filter. Moreover, our assumptions on $\mathscr{I}^*$ and $\mathcal{F}$ imply that $\mathcal{F}$ and $f(\mathscr{I}^*)$ are not nearly coherent, so Player I has no winning strategy in the game $\mathscr{G}_{\vec{E}_p}(\mathscr{I}^*,\mathcal{F})$. We define a strategy for Player I in the game $\mathscr{G}_{\vec{E}_p}(\mathscr{I}^*,\mathcal{F})$, in which Player I constructs along the game a decreasing sequence of conditions. This is done as follows:
\begin{enumerate}
    \item Player I starts by playing $m_0^0=0$. Define $q_{-1}=p$.
    \item After Player answers with $m^1_0>m_0^0$, define $A_0=\bigcap_{s\in 2^{m_0^1}}X_s$, and let Player I play the set $A_0$. Then Player II answers with $k_0\in A_0$. This finishes the first round of the game.
    \item Assume round number $n$ has finished and Player II has played $k_n\in A_n$. Then, for each $s\in 2^{m_n^1}$, pick $r_s\in 2^{<\omega}$ extending $s$ and such that $p^{r_s}\Vdash k_n\in A_n$. Let $m_{n+1}^0\ge\max\{\vert r_s\vert:s\in 2^{m_n^1}\}$ and let Player I play $m_{n+1}^0$. Then Player II answers with $m_{n+1}^1>m_{n+1}^0$. Let Player I play the set $A_{n+1}=\bigcap_{s\in 2^{m_{n+1}^1}}X_s$. Then Player II answers with $k_{n+1}\in A_{n+1}$, and the round is finished.
    
    Now we define condition $q_n$. For $s\in 2^{m_n^1}$, pick $\circ s\in 2^{m_{n+1}^0}$ such that $r_s\subseteq \circ s$. Define $q_{n}$ as follows:
    \begin{enumerate}
        \item $dom(\vec{E}_{q_n})=\bigcup_{l\leq n}\bigcup_{j\in[m_l^0,m_l^1)}E^{q_{n-1}}_j\cup\bigcup_{j\ge m_{n+1}^0}E_j^{q_{n-1}}$.
        \item $\vec{E}_{q_n}=\vec{E}_{q_{n-1}}\upharpoonright dom(\vec{E}_{q_n})=\vec{E}_p\upharpoonright dom(\vec{E}_{q_n})$.
        \item For $j\in[m_{n}^1,m_{n+1}^0)$, and $x\in 2^{A(E_{q_n})\cap (a_{j}^{q_{n-1}}+1)}$, let $s_x=x\upharpoonright a_{m_n^1}^{q_n}$ and define $H_{q_n}(a_{j}^{q_{n-1}})(x)=\circ s_x(j)$. This implicitly defines $H_{q_\omega}(j)$ for $j\in\bigcup_{l\in[m_{n}^1,m_{n+1}^0)}E_l^{q_{n-1}}$ (which should satisfy clause (2) from Definition \ref{shelah_forcing}).
        \item For $j\in dom(\vec{E}_{q_n})$, define $H_{q_n}(j)$ in the natural way to satisfy clauses ii) to iv) and (2) from Definition \ref{shelah_forcing}.
        \item For $j\notin dom(\vec{E}_{q_{n-1}})$, $H_{q_n}(j)$ is defined accordingly to clause (3) from Definition \ref{shelah_forcing}.
    \end{enumerate}
    Note that for each $s\in 2^{m_n^1}$, $q_n^s\leq p^{\circ s}\leq p^{r_s}$, so $q_n^s\Vdash k_n\in\dot{X}$. Thus, $q_n\Vdash k_n\in\dot{X}$.
\end{enumerate}
This is not a winning strategy for Player I, so there is a run of the game in which Player II wins, so, for such run, we have $\bigcup_{n\in\omega}\bigcup_{j\in[m_n^0,m_n^1)}E^p_j\in\mathscr{I}^*$ and $\{k_n:n\in\omega\}\in\mathcal{F}$. Let $\langle m_n^0,m_n^1,k_n, q_n:n\in\omega\rangle$ be the sequences constructed along such run of the game. Define a condition $q_\omega$ as follows:
\begin{enumerate}
    \item $dom(\vec{E}_{q_\omega})=\bigcup_{n\in\omega}\bigcup_{j\in [m_n^0,m_n^1)}E^p_{j}\in\mathscr{I}^*$.
    \item $\vec{E}_{q_\omega}=\vec{E}_p\upharpoonright dom(\vec{E}_{q_\omega})$.
    \item For any $j\in\omega$ and $x\in 2^{A(\vec{E}_{q_\omega})\cap (j+1)}$, let $H_{q_\omega}(j)=\lim_{n\to\omega} H_{q_n}(j)(x)$. 
\end{enumerate}
Checking that $q_\omega$ is a condition and a lower bound of $\{q_n:n\in\omega\}$ is quite similar to the case of Proposition \ref{sacks_property}, so we omit the details. Finally, note that since $q_n\Vdash k_n\in \dot{X}$ and $q_\omega\leq q_n$ for all $n\in\omega$, we also have that $q_\omega\Vdash \{k_n:n\in\omega\}\subseteq \dot{X}$.
\end{proof}

\section{Killing irresolvable spaces.}

Before proving the main theorem of the paper, we need to state a few lemmas, for which we include the proofs for self-contained purposes. Recall from the Introduction that a topological space is strongly irresolvable if any open subset is irresolvable with the subspace topology.

\begin{thm}\label{main_theorem}
It is relatively consistent with $\mathsf{ZFC}$ that $\omega_1=\mathfrak{r}_{\mathsf{scattered}}=\mathfrak{r}_{\mathsf{nwd}}<\mathfrak{irr}=\mathfrak{i}=\omega_2$.
\end{thm}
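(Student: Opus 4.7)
The plan is to perform a countable-support iteration $\langle\mathbb{P}_\alpha,\dot{\mathbb{Q}}_\alpha:\alpha<\omega_2\rangle$ of length $\omega_2$ over a model of $\mathsf{GCH}$, with each iterand $\dot{\mathbb{Q}}_\alpha=\mathbb{Q}(\dot{\mathscr{I}}_\alpha)$, where $\dot{\mathscr{I}}_\alpha$ is the nowhere dense ideal of some countable strongly irresolvable topology on $\omega$ in $V^{\mathbb{P}_\alpha}$. Because the nowhere dense ideal of a countable strongly irresolvable space is saturated (cf.\ the Introduction), Theorem~\ref{main_section_theorem} and Proposition~\ref{sacks_property} apply at every stage, and the whole iteration is proper with the Sacks property. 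A standard $\omega_2$-bookkeeping (using $2^{\omega_1}=\omega_2$, the fact that any countable topology of $\pi$-weight $\leq\omega_1$ is coded by an $\omega_1$-sized subset of $\mathcal{P}(\omega)$, and the usual cofinality argument that such codes reflect to some $V^{\mathbb{P}_\alpha}$ with $\alpha<\omega_2$) catches every such topology; it suffices to treat the strongly irresolvable case, since any countable irresolvable space contains a strongly irresolvable open subspace. Once $\dot{x}_{gen}$ and $\omega\setminus\dot{x}_{gen}$ have been added at stage $\alpha$, both being $\dot{\mathscr{I}}_\alpha$-positive and hence both dense in the chosen topology, that topology remains resolvable in all later extensions because its base is not modified. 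Hence every countable topology of $\pi$-weight $\leq\omega_1$ in $V^{\mathbb{P}_{\omega_2}}$ is resolvable, giving $\mathfrak{irr}\geq\omega_2$.

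For the reaping side, fix in the ground model a family $\mathscr{F}=\{\mathcal{U}_f:f\in\omega_1^{\omega_1}\}$ of $\omega_2$ selective rational filters as produced by Lemma~\ref{filters_family}. The invariant to maintain along the iteration is: at every stage $\alpha<\omega_2$, only countably many $\mathcal{U}_f\in\mathscr{F}$ fail to generate a selective rational filter in $V^{\mathbb{P}_\alpha}$. Under this invariant, at least $\omega_2$-many $\mathcal{U}_f$ survive all $\omega_2$ steps; a surviving $\mathcal{U}_f$ is generated by the $\omega_1$ sets $\{A_{f\upharpoonright\beta}:\beta<\omega_1\}$ and, as a maximal filter on $Dense(\mathbb{Q})$, is automatically a $Dense(\mathbb{Q})$-reaping family, giving $\mathfrak{r}_{\mathsf{nwd}}=\mathfrak{r}_{\mathbb{Q}}\leq\omega_1$. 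The matching lower bound $\mathfrak{r}_{\mathsf{scattered}}\geq\mathfrak{d}=\omega_1$ is the Proposition in the Preliminaries, using that the Sacks property of $\mathbb{P}_{\omega_2}$ implies $\mathfrak{d}^{V^{\mathbb{P}_{\omega_2}}}=\mathfrak{d}^V=\omega_1$.

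The successor step of the invariant is the heart of the argument. By Theorem~\ref{main_section_theorem}, $\mathcal{U}_f$ is killed at stage $\alpha$ only when some $\dot{\mathscr{I}}_\alpha$-to-one $h$ makes $\mathcal{U}_f\cup h(\dot{\mathscr{I}}_\alpha^*)$ generate a filter. Suppose for contradiction that $\{\mathcal{U}_{f_\beta}:\beta<\omega_1\}$ are all killed, witnessed by functions $h_\beta$. Applying Lemma~\ref{filters_family} inside $V^{\mathbb{P}_\alpha}$ (which has the same cardinals as $V$ by properness and the Sacks property) extracts $L\in[\omega_1]^{\omega_1}$ and an almost disjoint family $\{A_\beta:\beta\in L\}$ with $A_\beta\in\mathcal{U}_{f_\beta}$. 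The ``generates a filter'' hypothesis forces each preimage $h_\beta^{-1}[A_\beta]$ to be $\dot{\mathscr{I}}_\alpha$-positive; the almost disjointness of the $A_\beta$'s, combined with the $\dot{\mathscr{I}}_\alpha$-to-oneness of the $h_\beta$'s, pushes down to an uncountable antichain in $\mathcal{P}(\omega)/\dot{\mathscr{I}}_\alpha$, contradicting saturation. Limit stages are handled by the standard preservation theorem for countable-support iterations of proper forcings preserving selective filters, in the style of \cite{shelah_i_less_u}. This filter-killing count is the main obstacle; once it is settled, the theorem follows from the chain $\mathfrak{r}_{\mathsf{scattered}}=\mathfrak{r}_{\mathsf{nwd}}=\omega_1<\omega_2\leq\mathfrak{irr}\leq\mathfrak{i}\leq 2^\omega=\omega_2$.
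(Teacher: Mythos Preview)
Your overall strategy matches the paper's, but the successor-step counting has a genuine gap. You allow a different witness $h_\beta$ for each killed filter $\mathcal{U}_{f_\beta}$ and then claim that the almost disjoint sets $A_\beta$ pull back to an $\dot{\mathscr{I}}_\alpha$-antichain $\{h_\beta^{-1}[A_\beta]:\beta\in L\}$. This step fails: when $h_\beta\neq h_\gamma$ there is no reason for $h_\beta^{-1}[A_\beta]\cap h_\gamma^{-1}[A_\gamma]$ to lie in $\dot{\mathscr{I}}_\alpha$. The pushdown only works for a \emph{single} function $h$, since then $h^{-1}[A_\beta]\cap h^{-1}[A_\gamma]=h^{-1}[A_\beta\cap A_\gamma]$ is a finite union of $\dot{\mathscr{I}}_\alpha$-small sets. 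What the AD argument actually shows is that for each fixed $\dot{\mathscr{I}}_\alpha$-to-one $h$, at most countably many $\mathcal{U}\in\mathscr{F}$ have $\mathcal{U}\cup h(\dot{\mathscr{I}}_\alpha^*)$ generating a filter; summing over the $\omega_1$ many such $h$ available in $V^{\mathbb{P}_\alpha}$ (CH holds there, by properness and the Sacks property over a GCH ground model) bounds the number of filters destroyed at step $\alpha$ by $\omega_1$, not by $\omega$.

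This forces a correction to your invariant: at stage $\alpha$ as many as $|\alpha|\cdot\omega_1\leq\omega_1$ filters may have been destroyed, not countably many. In turn, your claim that $\omega_2$-many filters survive the full iteration is neither justified (the per-step bound $\omega_1$ over $\omega_2$ steps yields only the trivial bound $\omega_2$) nor needed. The argument the paper uses instead is: every dense $X\subseteq\mathbb{Q}$ in the final model already lies in some $V^{\mathbb{P}_\alpha}$ with $\alpha<\omega_2$; at that stage $\omega_2$-many members of $\mathscr{F}$ are still maximal rational filters, so $X$ is $Dense(\mathbb{Q})^+$-reaped by some generator $A_\eta$; and this reaping is upward absolute. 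Hence the ground-model family $\{A_\eta:\eta\in\omega_1^{<\omega_1}\}$, of size $\omega_1$, witnesses $\mathfrak{r}_{\mathsf{nwd}}\leq\omega_1$ in $V^{\mathbb{P}_{\omega_2}}$.
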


\begin{lemma}\label{strongly_irr_subspace}
Let $(\omega,\tau)$ be an irresolvable space. Then there is an open $X\subseteq\omega$ such that $(X,\tau\upharpoonright X)$ is strongly irresolvable, that is, every $\tau\upharpoonright X$-open subset of $X$ is irresolvable.
\end{lemma}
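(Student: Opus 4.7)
The plan is to identify the largest open subset on which resolvability is present, and take $X$ to be the interior of its complement. First, I would define $V$ to be the union of all open subsets $U\subseteq\omega$ such that $(U,\tau\upharpoonright U)$ is resolvable. The key claim is that $V$ itself is resolvable. To see this, I would invoke Zorn's Lemma to pick a maximal pairwise disjoint family $\{U_\alpha:\alpha\in I\}$ of non-empty resolvable open sets, and for each $\alpha$ fix a dense co-dense $D_\alpha\subseteq U_\alpha$. Setting $W=\bigcup_\alpha U_\alpha$ and $D=\bigcup_\alpha D_\alpha$, both $D$ and $W\setminus D$ are dense in $W$.

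Next I would argue that $W$ is dense in $V$: otherwise there is a non-empty open $O\subseteq V$ disjoint from $W$, and $O$ meets some resolvable open set $U'$ comprising $V$; then $O\cap U'$ is a non-empty open subset of the resolvable space $U'$, hence is itself resolvable (open subspaces of resolvable spaces are resolvable, because a dense co-dense set restricts to one). But then $\{U_\alpha\}\cup\{O\cap U'\}$ contradicts maximality. Density of the open set $W$ inside $V$ then propagates the resolvability of $W$ to $V$: if $O$ is open non-empty in $V$, then $O\cap W$ is open non-empty in $W$, so it meets both $D$ and $W\setminus D$, giving that $D$ and $V\setminus D$ are dense in $V$.

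Since $(\omega,\tau)$ is irresolvable, $V$ cannot be dense in $\omega$: otherwise the dense co-dense $D\subseteq V$ would witness resolvability of $\omega$, because $D$ is dense in $V$ which is dense in $\omega$, and similarly $V\setminus D\subseteq\omega\setminus D$ is dense in $\omega$. Hence $X:=\omega\setminus\overline{V}$ is a non-empty open set. Any non-empty subset of $X$ that is open in the subspace topology is also open in $\omega$ and disjoint from $V$; by the very definition of $V$, such a set cannot be resolvable. Thus $(X,\tau\upharpoonright X)$ is strongly irresolvable.

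The main technical point is the resolvability of $V$ — specifically the density of the ``diagonal'' open set $W$ inside $V$ and the corresponding gluing of the partial witnesses $D_\alpha$ into a global dense co-dense set. Once this is established, irresolvability of $\omega$ immediately forces $V$ to be non-dense, and the desired strongly irresolvable open set $X$ drops out.
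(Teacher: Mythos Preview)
Your proof is correct and follows essentially the same route as the paper: take a maximal pairwise disjoint family of resolvable open sets, observe that its union $W$ is resolvable, and set $X$ to be the interior of the complement. Your detour through $V=\bigcup\{U\in\tau:U\text{ resolvable}\}$ and the density of $W$ in $V$ is a harmless repackaging (indeed $\overline{V}=\overline{W}$, so your $X=\omega\setminus\overline{V}$ coincides with the paper's $\operatorname{int}(\omega\setminus W)$), and it makes the final step slightly cleaner since irresolvability of open subsets of $X$ follows directly from the definition of $V$ rather than from maximality of the disjoint family.
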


\begin{proof}
Let $R$ be the collection of all open subsets of $\omega$ which are resolvable. Note that if $U\in R$ and $V\subseteq U$ is open, then $V\in R$. Let $\mathcal{A}\subseteq R$ be a maximal subset of disjoint open sets. Note that $W=\bigcup \mathcal{A}$ is resolvable, and $\omega\setminus W$ has non-empty interior(otherwise $(\omega,\tau)$ would be resolvable). Define $X=int_{\tau}(\omega\setminus W)$. Clearly $(X,\tau\upharpoonright X)$ is strongly irresolvable: otherwise, there would be an open set $U\subseteq X$ which is resolvable, and since $X$ is open in $\tau$, $U$ is open in $\tau$, so we get $U\in\mathcal{R}$, which implies $X\cap W\neq\emptyset$, which is a contradiction.
\end{proof}

\begin{crl}
\begin{equation*}
    \mathfrak{irr}=\min\{\pi w((\omega,\tau)):(\omega,\tau)\text{ is a strongly irresolvable $T_3$ topological space}\}
\end{equation*}
\end{crl}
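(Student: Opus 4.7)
The inequality $\le$ (with the right-hand side restricted to strongly irresolvable spaces) is immediate, since a strongly irresolvable $T_3$ space on $\omega$ is in particular irresolvable, so the minimum over the smaller class cannot be smaller than $\mathfrak{irr}$. The content is the reverse inequality: given any $T_3$ irresolvable space $(\omega,\tau)$, produce a $T_3$ strongly irresolvable space on $\omega$ whose $\pi$-weight is bounded by $\pi w(\omega,\tau)$.

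The plan is to feed the given $(\omega,\tau)$ into Lemma~\ref{strongly_irr_subspace}, obtaining an open set $X\subseteq\omega$ such that $(X,\tau\upharpoonright X)$ is strongly irresolvable. First I would check that $X$ is necessarily infinite: in a $T_3$ (hence $T_1$) space, any finite open irresolvable subspace would be discrete, hence a singleton, in which case the original space would itself be resolvable (as one easily checks by splitting any dense set outside a single point). Thus, up to discarding this degenerate case, $X$ is countably infinite, and a bijection $X\to\omega$ transfers $\tau\upharpoonright X$ to a topology on $\omega$ that is $T_3$ (subspaces of $T_3$ spaces are $T_3$) and strongly irresolvable.

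Next I would verify the $\pi$-weight bound. If $\mathcal{B}$ is a $\pi$-base for $(\omega,\tau)$ of minimal cardinality, then because $X$ is open in $\tau$, the family $\mathcal{B}_X=\{B\in\mathcal{B}:B\subseteq X\}$ is a $\pi$-base for $(X,\tau\upharpoonright X)$: any non-empty open $U\subseteq X$ is open in $\tau$, so contains some $B\in\mathcal{B}$; by intersecting with an element of $\mathcal{B}$ contained in $U$ (using that $X$ is open and $\pi$-bases witness density of open sets), one gets a member of $\mathcal{B}_X$ inside $U$. Hence $\pi w(X,\tau\upharpoonright X)\le \pi w(\omega,\tau)$.

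Putting these together, every $T_3$ irresolvable space on $\omega$ gives rise to a $T_3$ strongly irresolvable space on $\omega$ of no greater $\pi$-weight, which yields the reverse inequality and completes the proof. The only non-routine point is the argument that $X$ must be infinite, which is the slight gap one has to cover to apply the lemma cleanly; everything else is a direct transfer of $\pi$-bases to an open subspace.
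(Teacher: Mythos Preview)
Your argument is exactly the paper's: the inequality $\le$ is immediate, and for $\ge$ you apply Lemma~\ref{strongly_irr_subspace} to pass to a strongly irresolvable open subspace and then observe that an open subspace has $\pi$-weight at most that of the ambient space.

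One correction to your aside about $X$ being infinite: a space with an isolated point is trivially \emph{irresolvable} (every dense set must contain that point, so any two dense sets meet), not resolvable as you wrote; and a finite discrete space with more than one point is still strongly irresolvable, so your ``hence a singleton'' step also fails. The actual reason $X$ is infinite is that in this setting irresolvable spaces are tacitly taken to be crowded (otherwise the discrete topology on $\omega$ would witness $\mathfrak{irr}=\omega$, contradicting $\mathfrak{r}\le\mathfrak{irr}$), and every nonempty open subset of a crowded $T_1$ space is infinite. This point is inessential to the main line, which is correct.
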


\begin{proof}
Clearly the inequality $\leq$ is true. The other inequality follows from the previous lemma and the fact that the $\pi$-weight of $(X,\tau\upharpoonright X)$ is at most the $\pi$-weight of $(\omega,\tau)$.
\end{proof}

\begin{dfn}
Let $(\omega,\tau)$ be a topological space. The ideal of $\tau$-nowhere dense subsets is denoted by $\mathsf{nwd}(\tau)$.
\end{dfn}

\begin{lemma}\label{saturation_of_nwd_tau}
Let $(\omega,\tau)$ be a strongly irresolvable space. Then $\mathsf{nwd}(\tau)$ is saturated.
\end{lemma}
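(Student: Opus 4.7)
The plan is to show directly that $\mathcal{P}(\omega)/\mathsf{nwd}(\tau)$ has no uncountable antichain, by combining a pigeonhole argument on the countable base $\mathcal{B}$ with two applications of irresolvability inside open subspaces of $(\omega,\tau)$.

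First I would suppose toward a contradiction that $\{A_\alpha:\alpha<\omega_1\}\subseteq\mathsf{nwd}(\tau)^+$ is an uncountable antichain, so $A_\alpha\cap A_\beta\in\mathsf{nwd}(\tau)$ for $\alpha\neq\beta$. For each $\alpha$, since $A_\alpha$ is not nowhere dense, $\overline{A_\alpha}$ has non-empty interior and therefore contains some basic open set $B_\alpha\in\mathcal{B}$. Because $\mathcal{B}$ is countable, pigeonhole gives an uncountable $I\subseteq\omega_1$ and a fixed $B\in\mathcal{B}$ with $B_\alpha=B$ for all $\alpha\in I$. Since $B\subseteq\overline{A_\alpha}$ and $B$ is open, the set $A_\alpha\cap B$ is dense in the subspace $B$ for every $\alpha\in I$.

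Now I would fix any two indices $\alpha\neq\beta$ in $I$ and use strong irresolvability twice to find a non-empty open set contained in $A_\alpha\cap A_\beta$, contradicting $A_\alpha\cap A_\beta\in\mathsf{nwd}(\tau)$. Concretely, since $(B,\tau\upharpoonright B)$ is irresolvable and $A_\alpha\cap B$ is dense in $B$, its complement $B\setminus A_\alpha$ is not dense in $B$, so there exists a non-empty open $V\subseteq B$ with $V\subseteq A_\alpha$. The set $V$ is again open in $\omega$, hence irresolvable in its subspace topology by strong irresolvability, and $A_\beta\cap V$ is dense in $V$ (because $A_\beta$ is dense in $B\supseteq V$ and $V$ is open in $B$). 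Applying the same argument inside $V$, there is a non-empty open $W\subseteq V$ with $W\subseteq A_\beta$. Then $W\subseteq A_\alpha\cap A_\beta$, so $A_\alpha\cap A_\beta\notin\mathsf{nwd}(\tau)$, the desired contradiction.

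The main obstacle, or at least the point requiring care, is the second application of irresolvability: one must verify that $V$, obtained as a non-empty open subset of $B$, is itself strongly irresolvable as an open subspace of $(\omega,\tau)$, and that $A_\beta\cap V$ is genuinely dense in $V$ rather than merely dense in $B$. Both facts are immediate from the hypothesis of strong irresolvability together with $V$ being open and $A_\beta\cap B$ being dense in $B$, but they are the ingredients that make the countable-base pigeonhole argument succeed; without the \emph{strong} irresolvability assumption one would only obtain a single open set inside $A_\alpha$, not an open set inside the intersection.
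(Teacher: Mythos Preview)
Your proof is correct, but it takes a longer path than the paper's. The key observation in both arguments is the one you make when obtaining $V\subseteq A_\alpha$: in a strongly irresolvable space, if $A$ is dense in a non-empty open set $B$, then $B\setminus A$ cannot also be dense in $B$, so $A$ contains a non-empty open subset of $B$. The paper isolates this observation as the standalone claim that \emph{every} $A\in\mathsf{nwd}(\tau)^+$ already has non-empty interior (take $B$ any open set inside $\overline{A}$), and then finishes in one line: the map $A\mapsto\operatorname{int}(A)$ sends any antichain in $\mathcal{P}(\omega)/\mathsf{nwd}(\tau)$ to a pairwise disjoint family of non-empty open subsets of $\omega$, hence countable. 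Your pigeonhole step and the second application of irresolvability (to find $W\subseteq A_\beta$) are therefore unnecessary: once each $A_\alpha$ contains a non-empty open $V_\alpha$, disjointness of the $V_\alpha$'s follows directly from $V_\alpha\cap V_\beta\subseteq A_\alpha\cap A_\beta\in\mathsf{nwd}(\tau)$, without needing to first collapse to a single basic $B$.
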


\begin{proof}
First note that for any $A\in \mathsf{nwd}(\tau)^+$, $A$ has non-empty interior: assume otherwise and pick some $A\in\mathsf{nwd}(\tau)^+$ with empty interior, and let $U\in\tau$ be such that $U\subseteq \bar{A}$. Note that $U\setminus A$ and $U\cap A$ are both dense in $U$, which means that $U$ is resolvable, which is a contradiction. Now, let $\mathcal{A}\subseteq\mathcal{P}(\omega)/\mathsf{nwd}(\tau)$ be a maximal antichain. Then $\{int(A):A\in\mathcal{A}\}$ is also an antichain, and moreover, for any two $A,B\in\mathcal{A}$, $int(A)\cap int(B)=\emptyset$, which implies that $\{int(A):A\in\mathcal{A}\}$ is countable (since $\omega$ is countable), which in turn implies that $\mathcal{A}$ is countable.
\end{proof}

\begin{lemma}\label{destroy_irresolvability}
Let $(\omega,\tau)$ be a strongly irresolvable space. Then $\mathbb{Q}(\mathsf{nwd}(\tau))$ forces that $\dot{x}_{gen}$ and $\omega\setminus\dot{x}_{gen}$ are dense in the topology generated by $\tau$.
\end{lemma}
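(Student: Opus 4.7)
The plan is as follows. Fix a countable base $\mathcal{B}$ for $\tau$. It suffices to show that for every non-empty $B\in\mathcal{B}$ and every $i\in 2$, the set
\begin{equation*}
D_B^i=\{p\in\mathbb{Q}(\mathsf{nwd}(\tau)):(\exists n\in B)\,H_p(n)\equiv i\}
\end{equation*}
is dense in $\mathbb{Q}(\mathsf{nwd}(\tau))$. A generic filter then produces, for each basic open $B$, witnesses $n\in B$ with $n\in\dot{x}_{gen}$ (when $i=1$) and $n\notin\dot{x}_{gen}$ (when $i=0$), giving density of both $\dot{x}_{gen}$ and $\omega\setminus\dot{x}_{gen}$ in $\tau$.

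Fix $p\in\mathbb{Q}(\mathsf{nwd}(\tau))$ and a non-empty $B\in\mathcal{B}$. Since $\vec{E}_p$ is an $\mathsf{nwd}(\tau)$-partition, $\omega\setminus dom(\vec{E}_p)\in\mathsf{nwd}(\tau)$, so this set is nowhere dense in $\tau$. Its closure therefore has empty interior, hence $B\not\subseteq\overline{\omega\setminus dom(\vec{E}_p)}$, and one may pick $n\in B\cap dom(\vec{E}_p)$. Let $k\in\omega$ be the unique index with $n\in E_k^p$.

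Now I would apply the $p^s$ construction (the Notation paragraph preceding Definition \ref{I_partitions_ordered}) with some $s\in 2^{k+1}$ to be chosen. By clauses (iii) and (iv) of Definition \ref{shelah_forcing}, $H_p(n)\in\{e_{a_k^p},1-e_{a_k^p}\}$. Items (3) and (4) in the definition of $p^s$ pin the value at $a_k^p$ to the constant $s(k)$, making $H_{p^s}(n)$ the constant function $s(k)$ or $1-s(k)$ respectively. Choosing $s(k)\in 2$ appropriately (the other bits of $s$ being arbitrary) yields $H_{p^s}(n)\equiv i$. Moreover, since $n\notin dom(\vec{E}_{p^s})$, clause (3) of the ordering forces $H_r(n)\equiv i$ for every $r\leq p^s$, so $p^s\Vdash n\in\dot{x}_{gen}$ when $i=1$ and $p^s\Vdash n\notin\dot{x}_{gen}$ when $i=0$. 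Thus $p^s\leq p$ witnesses density of $D_B^i$.

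There is no real main obstacle; the argument is bookkeeping inside the definition of $\mathbb{Q}(\mathsf{nwd}(\tau))$. The only conceptual content is the observation that, by definition of an $\mathsf{nwd}(\tau)$-partition, the complement of the domain is nowhere dense and hence cannot absorb any non-empty open set, so a witness in $B$ is always available on which the bit value can be chosen freely via $p^s$. Neither saturation nor strong irresolvability of $\tau$ is invoked here beyond what is needed to make the forcing well-defined.
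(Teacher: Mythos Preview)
Your proof is correct and follows essentially the same approach as the paper's. The paper's argument is terser: given a basic open $V$ and condition $p$, it picks two points $n_0,n_1\in V\cap dom(\vec{E}_p)$ and extends $p$ in one step so that $H_q(n_0)\equiv 0$ and $H_q(n_1)\equiv 1$ with $\min(dom(\vec{E}_q))>\max\{n_0,n_1\}$, whereas you treat the two values via separate dense sets $D_B^i$ and invoke the $p^s$ machinery explicitly; the underlying idea is identical.
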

\begin{proof} Let $p=(H_p,E_p)\in\mathbb{Q}(\mathsf{nwd}(\tau))$ be a condition. Note that for each $n\in dom(E_p)$, we can find extensions $q_0,q_1\leq p$ such that for each $i\in 2$, $H_{q_i}(n)$ has value constant $i$, and $\min(dom(q_i))>n$. Thus, if $V\in\tau$ is a basic open set, we can pick distinct $n_0,n_1\in V\cap dom(E_p)$ and find a condition $q\leq p$ such that for each $i\in 2$, $H_q(n_i)$ has constant value $i$, and $\min(dom(E_q))>\max\{n_0,n_1\}$. This means that $q\Vdash n_0\notin \dot{x}_{gen}\cap V,n_1\in\dot{x}_{gen}\cap V$.
\end{proof}

\noindent\textbf{Remark.} Note that in the generic extension by $\mathbb{Q}(\mathsf{nwd}(\tau))$, if $\tilde{\tau}$ is the topology generated by $\tau$, then the $\pi$-weight of $(\omega,\tilde{\tau})$ is the same as the $\pi$-weight of $(\omega,\tau)$ in the ground model. 
\\

We need the following preservation theorem from \cite{shelah_i_less_u}:
\begin{thm}\label{preservation_theorem}
Let $\mathcal{F}$ be a filter and $\mathcal{D}$ a family of subsets of $\omega$, and let $\mathbb{P}=\langle\mathbb{P}_\beta,\dot{\mathbb{Q}}_\beta:\beta<\alpha\rangle$ a countable support iteration of proper $\omega^\omega$-bounding forcings. Assume the following holds:
\begin{enumerate}
    \item $\mathcal{F}$ is a selective filter.
    \item For any $X\subseteq\omega$ such that $X\notin \mathcal{F}$, there is $D\in\mathcal{D}$ such that $X\subseteq D$.
    \item For all $\beta<\alpha$, $\mathbb{P}_\beta$ forces the statement,
    \begin{equation*}
        (*)\ \ \ \ \ \ \ \ (\forall X\subseteq\omega)( X\notin\langle\mathcal{F}\rangle\Longrightarrow(\exists D\in\mathcal{D})(X\subseteq^* D))
    \end{equation*}
\end{enumerate}
Then $\mathbb{P}$ forces $(*)$ as well.
\end{thm}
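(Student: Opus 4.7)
The plan is a transfinite induction on $\alpha$, following the standard Shelah framework for preservation over countable-support iterations. The successor step $\alpha = \gamma+1$ amounts to a one-step preservation: assuming $(*)$ holds in $V^{\mathbb{P}_\gamma}$ and that $\dot{\mathbb{Q}}_\gamma$ is proper and $\omega^\omega$-bounding there, one must show $(*)$ persists in $V^{\mathbb{P}_{\gamma+1}}$. Working in $V^{\mathbb{P}_\gamma}$, and fixing $p \in \dot{\mathbb{Q}}_\gamma$ and a name $\dot{X}$ with $p \Vdash \dot{X} \notin \langle\mathcal{F}\rangle$, I would combine $\omega^\omega$-bounding with selectivity of $\mathcal{F}$ (specifically the $q$-filter property) to produce, in $V^{\mathbb{P}_\gamma}$, a set $Y \notin \langle\mathcal{F}\rangle$ and a condition $q \leq p$ forcing $\dot{X} \subseteq^* Y$; then $(*)$ in $V^{\mathbb{P}_\gamma}$, applied to $Y$, yields $D \in \mathcal{D}$ with $q \Vdash \dot{X} \subseteq^* D$.

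For $\alpha$ a limit of uncountable cofinality, reflection suffices: a $\mathbb{P}_\alpha$-name for a subset of $\omega$ is determined by countably many maximal antichains, each countable by properness, hence already supported in some $\mathbb{P}_\beta$ with $\beta < \alpha$. The inductive hypothesis at $\beta$ then delivers the conclusion.

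The crux is the limit $\alpha$ of countable cofinality. Fix a cofinal $\alpha_n \nearrow \alpha$, a $\mathbb{P}_\alpha$-name $\dot{X}$, a condition $p$ forcing $\dot{X} \notin \langle\mathcal{F}\rangle$, and a countable elementary submodel $M \prec H(\theta)$ containing all the relevant data. I would construct inside $M$ a fusion sequence $p = p_0 \geq p_1 \geq \dots$ with each $p_n$ an $(M,\mathbb{P}_{\alpha_n})$-master condition extending $p_{n-1}$, and in parallel apply the inductive hypothesis at $\mathbb{P}_{\alpha_n}$ to obtain $D_n \in \mathcal{D}$ covering a suitable approximation of $\dot{X}$ below $p_n$. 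Taking a fusion lower bound $q \leq p$ (enabled by $\omega^\omega$-bounding of each factor) and amalgamating the nested $D_n$ via a pseudointersection (using the $p$-filter property of $\mathcal{F}$) yields a single $D \in \mathcal{D}$ with $q \Vdash \dot{X} \subseteq^* D$. The principal obstacle is precisely this coordination: the covering sets must be chosen coherently so as to admit a common pseudointersection compatible with the fusion, and the selectivity of $\mathcal{F}$ is exactly what supports the required diagonalization. Since the theorem is cited verbatim from \cite{shelah_i_less_u}, the full technical details are available in that reference.
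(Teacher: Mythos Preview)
The paper does not supply its own proof of this statement; it is quoted as a black box from \cite{shelah_i_less_u} (see the sentence ``We need the following preservation theorem from \cite{shelah_i_less_u}'' immediately preceding the theorem). There is therefore no in-paper argument to compare your attempt against, and you yourself correctly flag this in your final line.

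Your outline is the standard Shelah preservation schema and is an accurate high-level description of how such results are established. One contextual remark: in the paper's actual application (the proof of Theorem~\ref{main_theorem}), the successor-step preservation is \emph{not} extracted from this theorem but is handled directly by Theorem~\ref{main_section_theorem}, which gives the stronger conclusion that the specific iterands $\mathbb{Q}(\mathsf{nwd}(\tau))$ preserve the relevant selective rational filters. The role of Theorem~\ref{preservation_theorem} in the paper is purely to carry the preservation through limit stages, which is exactly the case you identified as the crux.
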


Note that if if $\mathcal{F}$ is a selective rational filter, then clauses (1) and (2) of the previous theorem are satisfied with $\mathcal{D}=\{\omega\setminus (B\cap X):B\in\mathcal{B}\land X\in\mathcal{F}\}$. Thus, the previous theorem implies that if we have a countable support iteration of proper forcings such that all its initial segments preserve a selective rational filter $\mathcal{F}$ in the sense of $(*)$ above, then the full iteration also preserves $\mathcal{F}$ as a selective rational filter in the sense of $(*)$. Note that this implies that $\mathcal{F}$ is preserved as a maximal selective filter. This will be used in the next proof.\\

\noindent\emph{Proof of Theorem \ref{main_theorem}.} Let $V$ be a model of $\mathsf{ZFC+GCH}$, and let $\mathscr{F}=\{\mathcal{U}_\alpha:\alpha\in\omega_2\}$ be the family of filters given by Lemma \ref{filters_family}. The forcing is a countable support iteration via a bookeeping of forcings of the form $\mathbb{Q}(\mathsf{nwd}(\tau))$, where $(\omega,\tau)$ is a strongly irresolvable space, taking care that at each successor step of the iteration we destroy the next strongly irresolvable space available in the bookeeping. The property of the family $\mathscr{F}$, makes sure that at any $\alpha\in\omega_2$, in $V[G_\alpha]$, for any saturated ideal $\mathscr{I}$, there are at most countably many filters $\mathcal{U}\in\mathscr{F}$ such that $f^*(\mathscr{I}^*)\cup\mathcal{U}$ generates a filter, where $f$ is an $\mathscr{I}$ to one function. This implies that at any step of the iteration we have destroyed the maximality of at most $\omega_1$ selective rational filters in the family $\mathscr{F}$, and we preserve the maximality of the other $\omega_2$ remaining selective rational filters in the family $\mathscr{F}$. Theorem \ref{preservation_theorem} makes sure that those rational filters in $\mathscr{F}$ which are not destroyed in successor steps of the iteration, are preserved in steps of length a limit ordinal. This in turn implies that the family $\bigcup\mathscr{F}$ remains as a $Dense(\mathbb{Q})$-reaping family in all stages of the iteration. Since every dense subset of $\mathbb{Q}$ appears in an intermediate extension, every dense subset of $\mathbb{Q}$ is $Dense(\mathbb{Q})$-reaped by some set in $\bigcup\mathscr{F}$, so $\mathfrak{r}_\mathsf{nwd}=\omega_1$ holds in the generic extension. Note that we actually have $\mathfrak{r}_\mathbb{Q}^+=\omega_1$. On the other hand, we have that Lemma \ref{destroy_irresolvability} and the bookeeping make sure that we destroy the irresolvability of any strongly irresolvable space that appears in the intermediate extension, so $\mathfrak{irr}>\omega_1$ is true in the generic extension.

\section{$\mathfrak{r}_\mathbb{Q}$ and $\mathfrak{u}_\mathbb{Q}$.}

Let $\mathcal{F}\subseteq Dense(\mathbb{Q})$ be a rational filter. A base for $\mathcal{F}$ is a family  $\mathcal{V}\subseteq\mathcal{F}$ such that any element in $\mathcal{F}$ contains one element from $\mathcal{V}$. The cardinal invariant $\mathfrak{u}_\mathbb{Q}$ is defined as the minimal cardinality of base of a rational filter,
\begin{equation*}
    \mathfrak{u}_\mathbb{Q}=\min\{\vert\mathcal{V}\vert:(\exists\mathcal{F}\subseteq Dense(\mathbb{Q})(\mathcal{F}\text{ is a rational filter}\land\mathcal{V}\text{ is a base for $\mathcal{F}$})\}
\end{equation*}
Clearly $\mathfrak{r}_\mathbb{Q}\leq\mathfrak{u}_\mathbb{Q}$ follows from $\mathsf{ZFC}$. The results from the previous sections can be easily modified to prove the consistency of $\mathfrak{r}_\mathbb{Q}<\mathfrak{u}_\mathbb{Q}$.

\begin{dfn}
The cardinal invariant $\mathfrak{irr}_2$, is defined as the minimal possible $\pi$-weight of a $T_2$ countable irresolvable space whose topology extends $\tau_\mathbb{Q}$, that is,
\begin{equation*}
    \mathfrak{irr}_2=\min\{\pi w((\omega,\tau)):(\omega,\tau)\text{ is a $T_2$ irresolvable space}\}
\end{equation*}
\end{dfn}

It clear that Lemmas \ref{strongly_irr_subspace}, \ref{saturation_of_nwd_tau} and \ref{destroy_irresolvability} apply to $T_2$ irresolvable spaces, so we can destroy irresolvable $T_2$ spaces in the same fashion as we did for regular irresolvable spaces. This remarks are motivated by the following proposition and the corollary below.

\begin{prp}\label{rational_filter_to_irresolvable_space}
$\mathfrak{irr}_2\leq\min\{\mathfrak{u}_\mathbb{Q},\mathfrak{irr}\}.$
\end{prp}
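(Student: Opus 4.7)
The plan is to establish the two bounds $\mathfrak{irr}_2 \le \mathfrak{irr}$ and $\mathfrak{irr}_2 \le \mathfrak{u}_\mathbb{Q}$ separately. The first is immediate, since every regular ($T_3$) space is $T_2$, so any $T_3$ irresolvable topology on $\omega$ automatically witnesses the minimum defining $\mathfrak{irr}_2$. The substantive content is the second bound, for which, given a rational filter together with a small base, I would manufacture a $T_2$ irresolvable topology on $\mathbb{Q}$ whose $\pi$-weight is at most the size of the base.

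For this I would fix a rational filter $\mathcal{F}$ with a base $\mathcal{V}\subseteq\mathcal{F}$ of cardinality $\mathfrak{u}_\mathbb{Q}$, and let $\tau$ be the topology on $\mathbb{Q}$ generated by the subbase $\tau_\mathbb{Q}\cup\mathcal{V}$. Typical basic opens have the form $U\cap V_1\cap\cdots\cap V_n$ with $U\in\mathcal{B}$ and $V_i\in\mathcal{V}$. The key preparatory observation is that any finite intersection of members of $\mathcal{F}$ lies in $Dense(\mathbb{Q})$: since $\mathcal{F}$ is a maximal rational filter, there exists $V\in\mathcal{F}$ with $V\subseteq^{*} V_1\cap\cdots\cap V_n$, and a set that contains a dense set modulo finite is itself dense. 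Consequently every basic open is non-empty, and the family $\{U\cap V_1\cap\cdots\cap V_n:U\in\mathcal{B},\ V_i\in\mathcal{V},\ n\in\omega\}$ is a base of size $\omega\cdot|\mathcal{V}|^{<\omega}=\mathfrak{u}_\mathbb{Q}$, giving $\pi w(\mathbb{Q},\tau)\leq\mathfrak{u}_\mathbb{Q}$. The space is $T_2$ because $\tau$ refines $\tau_\mathbb{Q}$.

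Irresolvability would then follow from the claim that every $\tau$-dense $D\subseteq\mathbb{Q}$ belongs to $\mathcal{F}$. To verify this, I would fix such a $D$ and an arbitrary $F\in\mathcal{F}$ and pick $V\in\mathcal{V}$ with $V\subseteq^{*} F$. For any $U\in\mathcal{B}$, $\tau$-density of $D$ gives $D\cap U'\cap V\ne\emptyset$ for every $U'\in\mathcal{B}$ with $U'\subseteq U$, so $D\cap U\cap V$ is in fact infinite; since $V\setminus F$ is finite, $D\cap U\cap F\neq\emptyset$, hence $D\cap F\in Dense(\mathbb{Q})$. Maximality of $\mathcal{F}$ then forces $D\in\mathcal{F}$. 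If $D_1,D_2$ are both $\tau$-dense, they both lie in $\mathcal{F}$, so any common $\subseteq^{*}$-lower bound $W\in\mathcal{F}$ satisfies $W\subseteq^{*} D_1\cap D_2$, making $D_1\cap D_2$ dense, and in particular non-empty, which is exactly irresolvability.

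The main technical step to get right, in my view, is the upgrade from $F\in\mathcal{V}$ to arbitrary $F\in\mathcal{F}$ in the previous paragraph: the finite discrepancy between $V$ and $F$ must be absorbed, which is why I would insist on showing that $D\cap U\cap V$ is infinite rather than merely non-empty (using countably many disjoint basic opens inside $U$). The rest is bookkeeping about the subbase-generated topology.
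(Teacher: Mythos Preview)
Your proof is correct and follows essentially the same route as the paper: refine $\tau_\mathbb{Q}$ by declaring filter elements open, then observe that $\tau$-dense sets must lie in the filter by maximality, so any two dense sets meet. The only cosmetic difference is that the paper puts the entire filter $\mathcal{U}$ into the base (writing $\mathcal{B}(\mathcal{U})=\{B\cap A:B\in\mathcal{B},\,A\in\mathcal{U}\}$), which makes the implication ``$D$ is $\tau$-dense $\Rightarrow D\cap A\in Dense(\mathbb{Q})$ for all $A\in\mathcal{U}$'' immediate and pushes the size computation into a one-line remark that the $\pi$-weight is at most $\chi(\mathcal{U})$; you instead use only the base $\mathcal{V}$ as a subbase, which makes the size bound transparent but forces the small extra argument passing from $V\in\mathcal{V}$ to arbitrary $F\in\mathcal{F}$ via the ``$D\cap U\cap V$ is infinite'' step. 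Both generate the same topology and the same conclusion.
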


\begin{proof}
Let $\mathcal{B}$ be our fixed base for $\mathbb{Q}$. Let $\mathcal{U}\subseteq Dense(\mathbb{Q})$ a rational filter. Define a base for a topology as $\mathcal{B}(\mathcal{U})=\{B\cap A:B\in\mathcal{B}\land A\in\mathcal{U}\}$ and let $\tau(\mathcal{U})$ be the topology generated by $\mathcal{B}(\mathcal{U})$. Then, $(\mathbb{Q},\tau(\mathcal{U}))$ is a $T_2$ irresolvable space: let $X\subseteq \mathbb{Q}$ be an arbitrary $\tau(\mathcal{U})$-dense set, which in particular implies that $X\in Dense(\mathbb{Q})$, and the definition of $\mathcal{B}(\mathcal{U})$ implies that the intersection of $X$ with any element of $\mathcal{U}$ is a dense set in $\mathbb{Q}$, so the maximality of $\mathcal{U}$ implies that $X\in\mathcal{U}$ holds; this clearly implies that the intersection of any two dense sets is not empty. The space $(\mathbb{Q},\tau(\mathcal{U}))$ is $T_2$ because the base $\mathcal{B}$ separates points. It is also clear that the $\pi$-weight of $(\mathbb{Q},\tau(\mathcal{U}))$ is at most $\chi(\mathcal{U})$. Thus, $\mathfrak{irr}_2\leq\mathfrak{u}_\mathbb{Q}$ follows.

The inequality $\mathfrak{irr}_2\leq\mathfrak{irr}$ is immediate, since any $T_3$ irresolvable space is $T_2$.
\end{proof}

Note that in the previous proposition we can not argue that the topological space $(\mathbb{Q},\tau(\mathcal{U}))$ is a $T_3$ space, since for any clopen subset $B$ of the rationals and any $A\in\mathcal{U}$ we have that $\overline{B\cap A}^{\tau(\mathcal{U})}=B$.

\begin{crl}
It is relatively consistent with $\mathsf{ZFC}$ that $\omega_1=\mathfrak{r}_\mathbb{Q}<\mathfrak{u}_\mathbb{Q}=\omega_2$.
\end{crl}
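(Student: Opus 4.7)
The plan is to repeat the proof of Theorem \ref{main_theorem}, but with the bookkeeping enumerating countable strongly irresolvable $T_2$ (rather than only $T_3$) spaces, and then to use Proposition \ref{rational_filter_to_irresolvable_space} to transfer a lower bound on $\mathfrak{irr}_2$ into a lower bound on $\mathfrak{u}_\mathbb{Q}$. First, as the author remarks just before Proposition \ref{rational_filter_to_irresolvable_space}, the ingredients needed for the iteration---Lemma \ref{strongly_irr_subspace} (reducing to strongly irresolvable subspaces), Lemma \ref{saturation_of_nwd_tau} (saturation of $\mathsf{nwd}(\tau)$), and Lemma \ref{destroy_irresolvability} (the generic real is dense co-dense)---go through verbatim in the $T_2$ setting, so the forcing $\mathbb{Q}(\mathsf{nwd}(\tau))$ continues to destroy irresolvability while being proper with the Sacks property whenever $(\omega,\tau)$ is strongly irresolvable and $T_2$.

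Starting from $V \vDash \mathsf{ZFC}+\mathsf{GCH}$, fix the family $\mathscr{F}=\{\mathcal{U}_\alpha:\alpha\in\omega_2\}$ of selective rational filters from Lemma \ref{filters_family}, and perform a countable support iteration $\langle\mathbb{P}_\beta,\dot{\mathbb{Q}}_\beta:\beta<\omega_2\rangle$ of length $\omega_2$ in which at each successor step, a suitable bookkeeping singles out a $\mathbb{P}_\beta$-name for a strongly irresolvable $T_2$ topology $\dot\tau$ on $\omega$ and forces with $\mathbb{Q}(\mathsf{nwd}(\dot\tau))$. Exactly as in the proof of Theorem \ref{main_theorem}, at each stage only $\omega_1$-many filters from $\mathscr{F}$ can be nearly cohered with $f(\mathscr{I}^*)$ for some $\mathscr{I}$-to-one function $f$, so by Theorem \ref{main_section_theorem} the remaining $\omega_2$-many filters of $\mathscr{F}$ are preserved as selective rational filters through the successor step; Theorem \ref{preservation_theorem} then handles limit stages. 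Consequently, in $V[G]$ the union $\bigcup\mathscr{F}$ is still a $Dense(\mathbb{Q})$-reaping family of size $\omega_1$, yielding $\mathfrak{r}_\mathbb{Q}=\omega_1$.

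For the other inequality, suppose for contradiction that $\mathfrak{u}_\mathbb{Q}=\omega_1$ in $V[G]$, and let $\mathcal{U}\subseteq Dense(\mathbb{Q})$ be a rational filter with a base of size $\omega_1$. Proposition \ref{rational_filter_to_irresolvable_space} produces from $\mathcal{U}$ a countable $T_2$ irresolvable topology $\tau(\mathcal{U})$ of $\pi$-weight at most $\omega_1$. By the $T_2$ version of Lemma \ref{strongly_irr_subspace}, some open subspace $X$ of this topology is strongly irresolvable and still has $\pi$-weight at most $\omega_1$. Because every such topology is coded by a real and $2^\omega=\omega_2$, a standard reflection argument shows that this space (or an isomorphic copy on $\omega$) appears in some intermediate model $V[G_\beta]$ for $\beta<\omega_2$, and the bookkeeping can be arranged so that at the successor stage $\beta$ the iteration forces with $\mathbb{Q}(\mathsf{nwd}(\tau\upharpoonright X))$, adding a dense co-dense subset of $X$ and destroying its irresolvability. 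Since subsequent forcings only add opens (they do not remove them), $X$ remains resolvable in $V[G]$, contradicting the fact that the closure under the new base of $\mathcal{U}$ in $V[G]$ would still yield a $T_2$ irresolvable topology. Hence $\mathfrak{u}_\mathbb{Q}\geq\omega_2$, and combined with $2^\omega=\omega_2$ we conclude $\mathfrak{u}_\mathbb{Q}=\omega_2$.

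The main obstacle is ensuring that ``destroying'' a strongly irresolvable $T_2$ space at stage $\beta$ really translates into destroying the maximality of the given rational filter $\mathcal{U}$ in $V[G]$: what the forcing immediately destroys is the irresolvability of $\tau(\mathcal{U})$, so one has to check that a dense co-dense set added in $X$ produces (back in $\mathbb{Q}$) a dense set witnessing that $\mathcal{U}$ is no longer a maximal rational filter. This is where Proposition \ref{rational_filter_to_irresolvable_space} and the observation that $\mathcal{B}(\mathcal{U})=\{B\cap A:B\in\mathcal{B},\ A\in\mathcal{U}\}$ is the base of $\tau(\mathcal{U})$ come in: a $\tau(\mathcal{U})$-dense co-dense set $D\subseteq\mathbb{Q}$ meets every $B\cap A$, so $D\in Dense(\mathbb{Q})$ and $D$ has dense intersection with every element of $\mathcal{U}$, yet its complement does too, so neither $D$ nor $\mathbb{Q}\setminus D$ can be added to a rational filter extending $\mathcal{U}$---precisely witnessing that the original base cannot generate a maximal rational filter in $V[G]$.
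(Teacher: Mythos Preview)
Your approach is exactly the paper's: run the iteration of Theorem~\ref{main_theorem} with the bookkeeping extended to strongly irresolvable $T_2$ spaces, conclude $\mathfrak{r}_\mathbb{Q}=\omega_1$ as before, and derive $\mathfrak{u}_\mathbb{Q}\geq\omega_2$ from Proposition~\ref{rational_filter_to_irresolvable_space} (i.e., from $\mathfrak{irr}_2\leq\mathfrak{u}_\mathbb{Q}$); the paper states the corollary without further argument, relying on precisely these ingredients.

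One remark on your final paragraph, which is actually superfluous since the contradiction is already complete in the third paragraph: the clause ``neither $D$ nor $\mathbb{Q}\setminus D$ can be added to a rational filter extending $\mathcal{U}$'' is inverted. If $D$ and $\mathbb{Q}\setminus D$ are both $\tau(\mathcal{U})$-dense, then each has dense intersection with every member of $\mathcal{U}$, so each \emph{can} be added; the contradiction is that maximality of $\mathcal{U}$ then forces \emph{both} into $\mathcal{U}$, which is impossible since their intersection is empty. Your conclusion---that the given base cannot generate a maximal rational filter in $V[G]$---is correct, but the route to it should read this way.
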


\section{On the existence of rational $p$ and $q$- filters.}

\begin{thm}
$\mathfrak{d}=\mathfrak{c}$ implies the generic existence of rational $p$-filters.
\end{thm}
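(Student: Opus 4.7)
The statement \emph{generic existence of rational $p$-filters} should mean: every $\subseteq$-centered family $\mathcal{G}\subseteq Dense(\mathbb{Q})$ (i.e.\ any finite intersection of its elements is in $Dense(\mathbb{Q})$) with $|\mathcal{G}|<\mathfrak{c}$ can be extended to a rational $p$-filter. The plan is to perform a transfinite recursion of length $\mathfrak{c}$ in the spirit of the classical Ketonen construction of $P$-points from $\mathfrak{d}=\mathfrak{c}$, adapted to $Dense(\mathbb{Q})$.

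First, fix such $\mathcal{G}$. Using $\mathfrak{d}=\mathfrak{c}$, fix an enumeration $\langle X_\alpha:\alpha<\mathfrak{c}\rangle$ of $Dense(\mathbb{Q})$ together with a $\leq^*$-dominating family $\langle f_\alpha:\alpha<\mathfrak{c}\rangle\subseteq\omega^\omega$, and set up a bookkeeping that at each stage handles either the maximality task for some $X_\alpha$ or the $p$-filter task for a countable $\subseteq^*$-decreasing sequence that has already appeared in the construction (each such sequence has been enumerated by stage $\mathfrak{c}$ since the filter has size $<\mathfrak{c}$ throughout). Then build a $\subseteq$-increasing chain of centered subfamilies $\langle\mathcal{F}_\alpha:\alpha<\mathfrak{c}\rangle\subseteq Dense(\mathbb{Q})$ with $\mathcal{F}_0=\mathcal{G}$ and $|\mathcal{F}_\alpha|\leq|\alpha|+\aleph_0$. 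At a successor stage dealing with $X_\alpha$, either $\mathcal{F}_\alpha\cup\{X_\alpha\}$ is still centered in $Dense(\mathbb{Q})$, in which case add $X_\alpha$; or some $Y\in\mathcal{F}_\alpha$ already witnesses that $X_\alpha\cap Y\notin Dense(\mathbb{Q})$, forbidding $X_\alpha$ from the final filter. At a successor stage dealing with a $\subseteq^*$-decreasing sequence $\vec{B}\subseteq\mathcal{F}_\alpha$, add a pseudointersection $Y$ provided by the key lemma below. At limits take unions. By construction, $\mathcal{F}=\bigcup_{\alpha<\mathfrak{c}}\mathcal{F}_\alpha$ is a rational filter (maximality from step one of the bookkeeping) and a $p$-filter (from step two).

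The key technical lemma is: given a centered $\mathcal{F}\subseteq Dense(\mathbb{Q})$ with $|\mathcal{F}|<\mathfrak{c}$ and a $\subseteq^*$-decreasing sequence $\langle B_n:n\in\omega\rangle\subseteq\mathcal{F}$, there is $Y\in Dense(\mathbb{Q})$ with $Y\subseteq^* B_n$ for all $n$ and such that $Y\cap F_0\cap\ldots\cap F_k\in Dense(\mathbb{Q})$ for every finite $F_0,\ldots,F_k\in\mathcal{F}$. To prove it, enumerate $\mathcal{B}=\{V_k:k\in\omega\}$ and plan to construct $Y=\{y_k:k\in\omega\}$ with $y_k\in V_k$. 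For each finite intersection $Z=F_0\cap\ldots\cap F_k\in\mathcal{F}$ and each basic open $V\in\mathcal{B}$, the set $Z\cap V\cap B_n$ is non-empty for every $n$ (since $\mathcal{F}$ is centered and $B_n\in\mathcal{F}$), and moreover contains arbitrarily large natural numbers because $\mathbb{Q}$ has no isolated points. For each such pair $(Z,V)$ assign a function $g_{Z,V}\in\omega^\omega$ encoding a witness of ``how late'' $Z\cap V\cap B_n$ can be found; there are $<\mathfrak{c}=\mathfrak{d}$ such functions. Pick dominating functions from the fixed dominating family to schedule a diagonal choice of $y_k$ that, for each pair $(Z,V)$, hits $Z\cap V$ infinitely often while also guaranteeing $y_k\in B_k$ for all but finitely many $k$. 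The resulting $Y$ is dense, is a pseudointersection of $\vec{B}$, and is compatible with every finite intersection from $\mathcal{F}$.

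The main obstacle is precisely this key lemma: the compatibility of $Y$ with all finite intersections from $\mathcal{F}$ simultaneously. This is where $\mathfrak{d}=\mathfrak{c}$ is essentially used, by providing enough dominating functions to schedule all $<\mathfrak{c}$ many compatibility tasks into a single countable diagonalization. Everything else reduces to a routine bookkeeping, entirely analogous to Ketonen's argument, and the density of $\mathbb{Q}$ (the absence of isolated points) plays the role of allowing the ``large enough'' choices at each step of the diagonalization.
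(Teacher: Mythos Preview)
Your plan is the same Ketonen-style recursion the paper uses, and the overall architecture (start from a small centered family, diagonalize against decreasing sequences using $\mathfrak{d}=\mathfrak{c}$, get maximality for free along the way) is correct. Two remarks on how your execution compares with the paper's.

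First, the paper does not separate the ``maximality'' and ``$p$-filter'' tasks. It simply enumerates \emph{all} $\subseteq$-decreasing sequences $\vec{D}_\alpha$ of dense sets and, at stage $\alpha$, if every term of $\vec{D}_\alpha$ is still positive for $\mathcal{F}_\alpha$, adjoins a positive pseudointersection $Z_\alpha$. Maximality then falls out by considering constant sequences. Your two-track bookkeeping works too; the paper's packaging is just a little cleaner.

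Second, and more substantively, your sketch of the key lemma has a gap as written. You propose $Y=\{y_k:k\in\omega\}$ with a \emph{single point} $y_k\in V_k\cap B_k$ and then say you will ``schedule'' the choices so that $Y$ hits every $Z\cap V$. But a diagonal choice of single points cannot be arranged this way against $<\mathfrak{c}$ many constraints simultaneously: knowing that $Z\cap V_k\cap B_k\cap f(k)\neq\emptyset$ for infinitely many $k$ does not let you pick one $y_k$ that lies in $Z$ for all the relevant $Z$'s at once. The paper's fix is to take \emph{fat} pieces rather than singletons: for each $X\in\mathcal{F}_\alpha$ set
\[
\varphi_X(n)=\min\{k:(\forall i\le n)\ D^\alpha_n\cap B_i\cap X\cap k\neq\emptyset\},
\]
choose a single $f$ that is not $\le^*$-dominated by any $\varphi_X$ (possible since there are $<\mathfrak{d}$ of them), and put $Z_\alpha=\bigcup_n D^\alpha_n\cap f(n)$. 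Whenever $f(n)>\varphi_X(n)$, the piece $D^\alpha_n\cap f(n)$ already meets $B_i\cap X$ for \emph{all} $i\le n$, so infinitely many such $n$ give density of $Z_\alpha\cap X$. Note also the direction: one needs $f$ \emph{unbounded} over the $\varphi_X$'s, not a ``dominating function from the fixed dominating family'' as you wrote. Once you replace your single-point selection by this union-of-initial-segments construction, your argument and the paper's coincide.
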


\begin{proof}
The argument is essentially the same as for the generic existence of $p$-points. For a given filter $\mathcal{F}\subseteq Dense(\mathbb{Q})$ and $X\in Dense(\mathbb{Q})$, let us say that $X$ is $(\mathcal{F},\mathsf{nwd})$-positive if for all $A\in\mathcal{F}$, $A\cap X\in Dense(\mathbb{Q})$. Let $\mathcal{F}\subseteq Dense(\mathbb{Q})$ be a filter of character less than $\mathfrak{c}$ and let $\mathcal{H}\subseteq\mathcal{F}$ a base for $\mathcal{F}$ of minimal cardinality, fix $\{\vec{D}_\alpha:\alpha\in\mathfrak{c}\}$ an enumeration of all $\subseteq$-decreasing sequences of dense subsets of the rationals. Construct a $\subseteq$-increasing sequence of families $\{\mathcal{F}_\alpha:\alpha\in\mathfrak{c}\}$ such that:
\begin{enumerate}
    \item $\mathcal{F}_0=\mathcal{H}$.
    \item For each $\alpha\in\mathfrak{c}$, if each set in the sequence $\vec{D}_\alpha$ is $(\mathcal{F}_\alpha,\mathsf{nwd})$-positive, then there is $X_\alpha\in\mathcal{F}_{\alpha+1}$ which is almost contained in each term of the sequence $\vec{D}_\alpha$.
    \item For all $\alpha\in\mathfrak{c}$, $\vert\mathcal{F}_\alpha\vert<\mathfrak{c}$.
\end{enumerate}
Assume $\mathcal{F}_\alpha$ has been constructed and each term of $\vec{D}_\alpha=\langle D_n^\alpha:\alpha\in\omega\rangle$ is $(\mathcal{F}_\alpha,\mathsf{nwd})$-positive (in other case define $\mathcal{F}_{\alpha+1}=\mathcal{F}_\alpha$). Let $\langle B_n:n\in\omega\rangle$ be an enumeration of a base for the rationals. For  each $X\in\mathcal{F}_\alpha$, define the function $$\varphi_X(n)=\min\{k\in\omega:(\forall i\leq n)(D^\alpha_n\cap B_i\cap X\cap k\neq\emptyset)\}$$ The family $\{\varphi_X:X\in\mathcal{F}_\alpha\}$ has cardinality less than $\mathfrak{d}$, so there is a function $f_\alpha$ which is not $\leq^*$-dominated by each one of the functions $\varphi_X$. Let $Z_\alpha=\bigcup_{n\in\omega} D_n^\alpha\cap f(n)$. It is easy to see that $Z_\alpha$ is a pseudointersection of $\vec{D}_\alpha$ and $(\mathcal{F}_\alpha,\mathsf{nwd})$-positive, so we can define $\mathcal{F}_{\alpha+1}=\mathcal{F}_\alpha\cup\{Z_\alpha\}$. At limit steps just take the union of all the previously constructed families. 
Now let $\mathcal{G}$ be the filter generated by $\bigcup_{\alpha\in\mathfrak{c}}\mathcal{F}_\alpha$. Clause (2) makes sure $\mathcal{G}$ is a $p$-filter. To see that $\mathcal{G}$ is maximal, pick $D\in Dense(\mathbb{Q})$, and let $\alpha$ be such that $\vec{D}_\alpha$ is the constant sequence with values equal to $D$, then either, $D$ is not $(\mathcal{F}_\alpha,\mathsf{nwd})$-positive, or $Z_\alpha\subseteq^* D$.
\end{proof}

Let us recall that a function $f:2^{<\omega_1}\to X$, where $X$ is a Polish space, is Borel, if for all $\alpha\in\omega_1$, the restriction $f\upharpoonright 2^\alpha$ is a Borel function. The use of parametrized diamond principles has shown to be useful when establishing the existence of some specific mathematical objects, such as $p$-points, Souslin trees, independent families, ideal independent families, etc. In \cite{balzar_hrusak_hernandez}, it was proved that $\diamondsuit(\mathfrak{r}_{\mathbb{Q}})$ implies the existence of an independent family. Here we prove that the diamond principle $\diamondsuit(\mathfrak{r}_{\mathbb{Q}}^+)$ implies the existence of selective rational filters. Let us briefly recall the diamond principle $\diamondsuit(\mathfrak{r}_\mathbb{Q}^+)$:
\begin{enumerate}
    \item [$\diamondsuit(\mathfrak{r}_\mathbb{Q}^+)$:] For any Borel function $F:2^{<\omega_1}\to Dense(\mathbb{Q})$, there is a function $g:\omega_1\to Dense(\mathbb{Q})$ such that for any $f\in 2^{\omega_1}$, the set $$\{\alpha\in\omega_1: g(\alpha)\text{ $Dense(\mathbb{Q})^+$-reaps }F(f\upharpoonright\alpha)\}$$ is stationary.
\end{enumerate}

\begin{thm}
$\diamondsuit(\mathfrak{r}_{\mathbb{Q}}^+)$ implies the existence of a selective rational filter $\omega_1$-generated. 
\end{thm}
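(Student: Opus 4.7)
The plan is to construct, by transfinite recursion on $\alpha<\omega_1$, a $\subseteq^*$-decreasing sequence $\langle A_\alpha:\alpha<\omega_1\rangle\subseteq Dense(\mathbb{Q})$ whose generated filter $\mathcal{F}$ is the desired selective rational filter. Two observations streamline the setup. First, any filter $\omega_1$-generated by a $\subseteq^*$-decreasing sequence is automatically a $p$-filter, since any countable $\{D_n\}\subseteq\mathcal{F}$ with $A_{\alpha_n}\subseteq^* D_n$ admits $A_{\sup_n\alpha_n}$ as a pseudointersection. Second, $\mathfrak{d}\leq\mathfrak{r}_\mathbb{Q}\leq\mathfrak{r}_\mathbb{Q}^+=\aleph_1$, so I have a $\leq^*$-dominating family of size $\aleph_1$ at hand; this suffices for a standard book-keeping that secures the $q$-filter property by ensuring, at stage $\alpha$, that the pseudointersection taken is a partial selector of a suitable cofinal family of interval partitions using Lemma 1(3).

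The maximality of $\mathcal{F}$ will come from $\diamondsuit(\mathfrak{r}_\mathbb{Q}^+)$. I would apply it to the Borel function $F:2^{<\omega_1}\to Dense(\mathbb{Q})$ which, given its argument, reads a subset of $\mathbb{Q}$ (identified with $\omega$) off the first $\omega$ coordinates and returns it when dense, defaulting to a fixed dense set otherwise. Every $X\in Dense(\mathbb{Q})$ arises as $F(f_X\upharpoonright\alpha)$ for all $\alpha\geq\omega$ from a suitable $f_X\in 2^{\omega_1}$, so the diamond supplies $g:\omega_1\to Dense(\mathbb{Q})$ for which
\[
S_X=\{\alpha<\omega_1:g(\alpha)\text{ $Dense(\mathbb{Q})^+$-reaps }X\}
\]
is stationary for every $X\in Dense(\mathbb{Q})$.

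The recursive step is to pick $B_\alpha$ a pseudointersection of $\{A_\beta:\beta<\alpha\}$ satisfying the $q$-book-keeping, and then set $A_\alpha=B_\alpha\cap g(\alpha)$ whenever this intersection is in $Dense(\mathbb{Q})$, and $A_\alpha=B_\alpha$ otherwise. Either way, $A_\alpha$ itself $Dense(\mathbb{Q})^+$-reaps $g(\alpha)$: in the first case $A_\alpha\subseteq^* g(\alpha)$, and in the second $A_\alpha\cap g(\alpha)\notin Dense(\mathbb{Q})$.

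The main obstacle is the maximality verification. Given $X\in Dense(\mathbb{Q})$, one seeks some $\alpha$ with $A_\alpha\subseteq^* X$ (placing $X\in\mathcal{F}$) or $A_\alpha\cap X\notin Dense(\mathbb{Q})$ (killing $X$). Composing the two reapings ``$A_\alpha$ reaps $g(\alpha)$'' and ``$g(\alpha)$ reaps $X$'' on $\alpha\in S_X$ is only automatic in the sub-cases where $A_\alpha\subseteq^* g(\alpha)$, i.e., where $B_\alpha\cap g(\alpha)\in Dense(\mathbb{Q})$ at stage $\alpha$; the mixed-sign sub-cases do not transfer directly. The plan to force the favorable sub-case on some $\alpha\in S_X$ is a contradiction argument: assuming $X$ is neither in $\mathcal{F}$ nor killed by $\mathcal{F}$, every $A_\beta$ has $A_\beta\cap X\in Dense(\mathbb{Q})$, and one can correspondingly arrange the pseudointersections $B_\alpha$ to remain $Dense(\mathbb{Q})$-positive against $X$. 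Combined with the stationary reaping of $X$ by $g(\alpha)$, this forces $B_\alpha\cap g(\alpha)\in Dense(\mathbb{Q})$ on a stationary subset of $S_X$, on which $A_\alpha\subseteq^* g(\alpha)$ and the composition of reapings yields the desired witness, contradicting the assumption. Making this diagonal/preservation step precise is the essence of the proof.
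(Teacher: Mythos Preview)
Your proposal has a genuine gap in the maximality argument, and the proposed fix cannot work as stated.

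The issue is that your Borel function $F$ reads only the candidate set $X$ off the first $\omega$ coordinates and ignores everything else. Consequently, the guessing sequence $g$ you obtain depends on nothing about your filter construction: for each $X$, the statement ``$g(\alpha)$ $Dense(\mathbb{Q})^+$-reaps $X$ stationarily often'' is exactly what any $Dense(\mathbb{Q})^+$-reaping family of size $\omega_1$ would give you. In effect you are using only the consequence $\mathfrak{r}_\mathbb{Q}^+=\omega_1$, not the anticipatory power of the diamond. From that hypothesis alone it is not at all clear how to build a small rational filter; this is precisely the gap between reaping-type and ultrafilter-type invariants.

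Your contradiction argument does not close the gap. You write that, assuming $X$ is neither in $\mathcal{F}$ nor killed, ``one can correspondingly arrange the pseudointersections $B_\alpha$ to remain $Dense(\mathbb{Q})$-positive against $X$.'' But the $B_\alpha$ are fixed during the construction, long before the test set $X$ is considered; you cannot retroactively arrange them relative to a specific $X$. Even granting that every $A_\beta\cap X$ is dense for $\beta<\alpha$, nothing forces the pseudointersection $B_\alpha$ to satisfy $B_\alpha\cap X\in Dense(\mathbb{Q})$, and nothing forces $B_\alpha\cap g(\alpha)\in Dense(\mathbb{Q})$ either. So the bad cases (2) and (4) in your own case split remain live on all of $S_X$.

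The paper's proof avoids this by feeding the \emph{entire sequence constructed so far} into $F$: the input is (a code for) a pair $(X,\langle D_\beta:\beta<\alpha\rangle)$, and $F$ first forms a canonical dense pseudointersection $\varphi_\alpha$ of the $D_\beta$'s, pulls $X\cap\varphi_\alpha$ back through a fixed homeomorphism $\psi_\alpha:\omega\to\varphi_\alpha$, and outputs that. The guessing function $g$ then reaps $X$ \emph{relative to the filter at stage $\alpha$}, and one simply takes $D_\alpha\subseteq\psi_\alpha[g(\alpha)]$. Now the reaping transfers directly: if $g(\alpha)\subseteq^*\psi_\alpha^{-1}[X\cap\varphi_\alpha]$ then $D_\alpha\subseteq^* X$, and if $g(\alpha)\cap\psi_\alpha^{-1}[X\cap\varphi_\alpha]\notin Dense(\mathbb{Q})$ then $D_\alpha\cap X\notin Dense(\mathbb{Q})$. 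No composition of two reapings is needed, because the relativization is already built into $F$. This use of the sequence $\langle D_\beta:\beta<\alpha\rangle$ as part of the input is the essential idea your proposal is missing.
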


\begin{proof}
It is not hard to see that the functions described below can be chosen to be Borel, so we omit the details of this. First let us recall that if $\diamondsuit(\mathfrak{r}_\mathbb{Q}^+)$ holds, then $\mathfrak{r}_\mathbb{Q}^+=\omega_1$, and since $\mathfrak{d}\leq\mathfrak{r}_\mathbb{Q}^+$ holds, we get $\mathfrak{d}=\omega_1$ as well. Let $\{\vec{\mathcal{I}}_\alpha:\alpha\in\omega_1\}$ be a dominating family of interval partitions, that is for any interval partition $\vec{\mathcal{J}}$, there is $\alpha\in\omega_1$ such that any $I\in\vec{\mathcal{I}}_\alpha$ contains one element from $\vec{\mathcal{J}}$. For each limit ordinal $\alpha\in\omega_1$, fix  a bijection $e_\alpha:\omega\to\alpha$. Now, by a suitable coding, we can assume that the domain of the function $F$ is $dom(F)=\bigcup_{\alpha\in\lim(\omega_1)}[\omega]^{\omega}\times([\omega]^\omega)^\alpha$. Given a sequence $\vec{X}=\langle X_\beta:\beta\in\alpha\rangle$, let us say that is $\mathbb{Q}$-centered if the intersection of finitely many terms of the sequence is a dense set. Given a $\mathbb{Q}$-centered sequence $\vec{X}=\langle X_\beta:\beta<\alpha\rangle$, let $\varphi_\alpha(\langle X_\beta:\beta<\alpha\rangle)$ be the set $\{k_n:n\in\omega\}$ defined by,
\begin{equation*}
    k_n=\min\left(\bigcap_{j\leq n}X_{e_\alpha(j)}\cap B_n\right)
\end{equation*}
Note that the set $\{k_n:n\in\omega\}$ is a dense set and a pseudointersection of $\{X_\beta:\beta<\alpha\}$. Now define the function $F$ as follows:
\begin{enumerate}
    \item[a)] If $(C,\langle X_\beta:\beta<\alpha\rangle)$ is such that $\langle X_\beta:\beta<\alpha\rangle$ $\mathbb{Q}$-centered sequence, let $F(C,\langle X_\beta:\beta<\alpha\rangle)=\omega$.
    \item[b)] If $\langle X_\beta:\beta<\alpha\rangle$ $\mathbb{Q}$-centered sequence, but $C\cap\varphi_\alpha(\langle X_\beta:\beta<\alpha\rangle)$ is not a dense set, define $F(C,\langle X_\beta:\beta<\alpha\rangle)=\omega$.
    \item[c)] In the remaining case, let $\psi_{\varphi_\alpha(\langle X_\beta:\beta\in\alpha\rangle)}:\omega\to\varphi_\alpha(\langle X_\beta:\beta<\alpha\rangle)$ be a homeomorphism between $\omega$ and $\varphi_\alpha(\langle X_\beta:\beta<\alpha\rangle)$, which is constructed in a recursive way. Then define $F(C,\langle X_\beta:\beta<\alpha\rangle)=\psi_\alpha^{-1}[C\cap\varphi_\alpha(\langle X_\beta:\beta<\alpha\rangle)]$.
\end{enumerate}

Let now $g$ be a guessing function for $F$. Define an increasing sequence of families $\{\mathcal{F}_\alpha:\alpha\in \lim(\omega_1)\}$ as follows:
\begin{enumerate}
    \item Start with $\mathcal{F}_\omega=\{\omega\setminus k:k\in\omega\}$.
    \item Suppose $\mathcal{F}_\alpha=\{D_\beta:\beta<\alpha\}$ has been defined. Let $D_\alpha\subseteq\psi_{\varphi_\alpha(\langle D_\beta:\beta\in\alpha\rangle)}[g(\alpha)]$ be such that it is a dense set and a partial selector of $\vec{\mathcal{I}}_\alpha$, and put $\mathcal{F}_{\alpha+1}=\mathcal{F}_\alpha\cup\{D_\alpha\}$.
\end{enumerate}
It is clear from the construction that $\mathcal{F}_\alpha$ is a family of dense sets generating a filter. Also note that $\mathcal{H}=\{D_\alpha:\alpha\in\omega_1\}$ is a $\subseteq^*$-decreasing sequence of dense subsets, so it is enough to prove that the filter generated by $\mathcal{H}$ is maximal and a $q$-filter. Let $X\in Dense(\mathbb{Q})$ be arbitrary and consider $(X,\langle D_\alpha:\alpha\in\omega_1\rangle)$. By the guessing property of $g$, there is $\alpha\in\omega_1$ such  that $g(\alpha)$ $Dense(\mathbb{Q})$-reaps $F(X,\langle D_\beta:\beta<\alpha\rangle)$, that is $g(\alpha)$ $Dense(\mathbb{Q})^+$-reaps the set $\psi_{\varphi_\alpha(\langle D_\beta:\beta<\alpha\rangle)}^{-1}[X\cap\varphi_\alpha(\langle D_{\beta}:\beta<\alpha\rangle)]$, so we have that, either $D_\alpha\subseteq^*X$ or $D_\alpha\cap X$ is not dense. This shows that the filter generated by $\mathcal{H}$ maximal. Since it is generated by a $\subseteq^*$-decreasing sequence, it is also a $p$-filter. To see that this filter is a $q$-filter, note that by clause (2) above, $D_\alpha$ is a partial selector of $\vec{\mathcal{I}}_\alpha$; this clearly implies that the filter generated by $\mathcal{H}$ has partial selector for all interval partitions.
\end{proof}

Now we are aiming to prove that consistently there is no rational $p$-filter. It may be expected that the proof should be quite similar to the one for $p$-points. However, after trying to adapt the proof one finds some difficulties which arise from the fact that we don't have the property ``$x\in\mathcal{F}$ or $\omega\setminus x\in\mathcal{F}$''. Indeed, the typical forcings used to destroy $p$-points, o specific non-meager $p$-filters, add an $\omega$-sequence $\langle\dot{x}_n:n\in\omega\rangle$ of $\mathcal{F}$-positive sets, and any ultrafilter extending $\mathcal{F}$ makes a selection of these sets and its complements, and the forcings are designed to make sure that all pseudointersections of such selection live in the dual ideal $\mathcal{F}^*$. In the present context we can have actually that if $\mathcal{G}$ is a rational filter extending $\mathcal{F}$, then $\{\dot{x}_n,\omega\setminus\dot{x}_n:n\in\omega\}\cap\mathcal{G}=\emptyset$, that is, neither $\dot{x}_n$ and $\omega\setminus \dot{x}_n$ are in $\mathcal{G}$, for all $n\in\omega$. Indeed, assume $\{B_n:n\in\omega\}$ is a maximal antichain of basic open sets, and let us take, for example, the set
$$X=\bigcup_{n\in\omega}\dot{x}_n\cap\dot{x}_{n-1}^{0}\cap\ldots\cap\dot{x}_0^0\cap B_n$$
where $\dot{x}_n^0=\omega\setminus \dot{x}_n$, and $\dot{x}_n$ are the generic sets added by any of the typical forcings used to destroy non-meager $p$-filters. If $\mathcal{G}$ is a filter extending $\mathcal{F}\cup\{X\}$, then $\mathcal{G}$ is one of the stated filters. After thinking a little bit more, one finds that infinite sequences of sets similar to $X$ above can make things more complicated. Thus, this path, although not completely proved to be unprofitable, seems hard to take over. 

The answer to the problem comes from a theorem proved in \cite{no_p_saturated_filters}: it is consistent that there is no saturated $p$-filter. Recall from Proposition \ref{rational_filter_to_irresolvable_space} that from a rational filter $\mathcal{F}$ we constructed an irresolvable space $(\omega,\tau(\mathcal{F}))$. Now, Lemma \ref{strongly_irr_subspace} implies that there is a $\tau(\mathcal{F})$-open set $V$ such that $(V,\tau(\mathcal{F})\upharpoonright V)$ is strongly irresolvable. We claim that $\mathcal{F}\upharpoonright V=\{X\cap V:X\in\mathcal{F}\}$ is a saturated $p$-filter.

\begin{prp}
If $\mathcal{F}$ is a rational $p$-filter, then there is a basic open set $B\in \mathcal{B}$ and some $X\in\mathcal{F}$ such that $F\upharpoonright(X\cap B)$ is a saturated $p$-filter.    
\end{prp}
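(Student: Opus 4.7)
The plan is to exploit the topological picture from Proposition \ref{rational_filter_to_irresolvable_space}: attach to $\mathcal{F}$ the irresolvable $T_2$ topology $\tau(\mathcal{F})$ on $\mathbb{Q}$ generated by $\mathcal{B}(\mathcal{F})=\{B'\cap A': B'\in\mathcal{B},\,A'\in\mathcal{F}\}$, then localize to a basic open where the topology becomes strongly irresolvable, and finally identify the dual ideal of $\mathcal{F}$ restricted there with the $\tau(\mathcal{F})$-nowhere dense ideal. Throughout, the key fact from the proof of Proposition \ref{rational_filter_to_irresolvable_space} is that maximality of $\mathcal{F}$ gives: a set $D\subseteq\mathbb{Q}$ is $\tau(\mathcal{F})$-dense if and only if $D\in\mathcal{F}$.

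First, apply Lemma \ref{strongly_irr_subspace} (whose proof does not use regularity) to $(\mathbb{Q},\tau(\mathcal{F}))$ to obtain a $\tau(\mathcal{F})$-open $U$ whose subspace topology is strongly irresolvable. Since strong irresolvability is inherited by open subspaces, shrink to any nonempty basic open $V=B\cap X\subseteq U$ with $B\in\mathcal{B}$ and $X\in\mathcal{F}$; the claim will be that $\mathcal{F}\upharpoonright(X\cap B)$ is the sought saturated $p$-filter. That it is a $p$-filter is immediate from $\mathcal{F}$ being one: given a countable $\subseteq^{*}$-decreasing sequence $\langle X_n\cap V:n\in\omega\rangle$ in $\mathcal{F}\upharpoonright V$, pass to $\langle\bigcap_{k\le n}X_k:n\in\omega\rangle\subseteq\mathcal{F}$, let $Z\in\mathcal{F}$ be a pseudointersection, and observe that $Z\cap V$ is a pseudointersection of the original sequence in $\mathcal{F}\upharpoonright V$.

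The core of the proof is saturation, which I will deduce from the identity
\begin{equation*}
(\mathcal{F}\upharpoonright V)^{*}\;=\;\mathsf{nwd}\bigl(\tau(\mathcal{F})\upharpoonright V\bigr),
\end{equation*}
combined with Lemma \ref{saturation_of_nwd_tau} applied to the countable strongly irresolvable space $(V,\tau(\mathcal{F})\upharpoonright V)$. For the inclusion $(\subseteq)$, if $Y\in\mathcal{F}$ witnesses $Y\cap V\cap N=\emptyset$, then for any nonempty $\tau(\mathcal{F})$-open $U'\subseteq V$ the set $U'\cap Y$ is open in $V$, disjoint from $N$, and nonempty: pick a basic open $B'\cap A'\cap V\subseteq U'$; then $U'\cap Y\supseteq B'\cap B\cap(A'\cap X\cap Y)$, and since $A'\cap X\cap Y\in\mathcal{F}$ is dense in $\mathbb{Q}$ while $B'\cap B$ is a nonempty open of $\mathbb{Q}$ (by density of $A'\cap X$ and non-emptiness of the original basic open), their intersection is nonempty. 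For the inclusion $(\supseteq)$, assume $N\subseteq V$ is $\tau(\mathcal{F})\upharpoonright V$-nowhere dense; by the characterisation in strongly irresolvable spaces (an nwd set has empty interior, and its complement is then dense), $V\setminus N$ is $\tau(\mathcal{F})$-dense in $V$. Upgrade this to $\tau(\mathcal{F})$-density of $\mathbb{Q}\setminus N$ in $\mathbb{Q}$ by splitting into cases on a basic open $B'\cap A'$: if it meets $V$, density of $V\setminus N$ supplies a witness; if it is disjoint from $V$, then $B'\cap A'\cap N=\emptyset$ since $N\subseteq V$. By the equivalence of Step 1, $\mathbb{Q}\setminus N\in\mathcal{F}$, witnessing $N\in(\mathcal{F}\upharpoonright V)^{*}$.

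The main obstacle is the $(\supseteq)$ direction of the ideal identity: one must promote a purely topological smallness property inside $V$ to membership in the dual ideal of $\mathcal{F}\upharpoonright V$. This is where strong irresolvability is used essentially (to turn empty interior into dense complement), and where it matters that $V$ itself is a basic open of $\tau(\mathcal{F})$, so that density in $V$ can be extended to $\tau(\mathcal{F})$-density on all of $\mathbb{Q}$ and then converted back, via maximality of $\mathcal{F}$, into an element of $\mathcal{F}$ disjoint from $V\cap N$.
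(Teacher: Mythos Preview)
Your proof is correct, and the overall architecture matches the paper's: pass to a strongly irresolvable basic open $V=B\cap X$, identify $(\mathcal{F}\upharpoonright V)^*$ with $\mathsf{nwd}(\tau(\mathcal{F})\upharpoonright V)$, and invoke Lemma~\ref{saturation_of_nwd_tau}. The substantive difference is in how you handle the inclusion $\mathsf{nwd}(\tilde\tau)\subseteq(\mathcal{F}\upharpoonright V)^*$. The paper argues combinatorially: given $N\in\mathsf{nwd}(\tilde\tau)$ it builds a maximal antichain $\{B_n\cap X_n\}$ of basic opens missing $N$, then uses the $p$-filter property to find a single $X_\omega\in\mathcal{F}$ almost below every $X_n$, trims finitely in each $B_n$, and assembles an element of $\mathcal{F}\upharpoonright V$ disjoint from $N$. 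You instead use the global equivalence ``$\tau(\mathcal{F})$-dense $\iff$ element of $\mathcal{F}$'' (from Proposition~\ref{rational_filter_to_irresolvable_space}) to see directly that $\mathbb{Q}\setminus N\in\mathcal{F}$. This is cleaner and, notably, never invokes the $p$-property for the saturation step; your argument shows that $\mathcal{F}\upharpoonright V$ is saturated for \emph{any} rational filter $\mathcal{F}$, with the $p$-hypothesis needed only to transport the $p$-property to the restriction. One small remark: in your $(\supseteq)$ paragraph you cite strong irresolvability for ``$N$ nowhere dense $\Rightarrow$ $V\setminus N$ dense'', but that implication holds in every space; strong irresolvability is used only to feed Lemma~\ref{saturation_of_nwd_tau}.
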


\begin{proof}
We have seen in the paragraph preceding this proposition the existence of a $\tau(\mathcal{F})$-open set $V$ such that $\mathcal{F}\upharpoonright V$ is strongly irresolvable. A base the topology $\tau(\mathcal{F})$ was defined as $\mathcal{B}_{\mathcal{F}}=\{B\cap X:B\in\mathcal{B}\land X\in\mathcal{F}\}$, so we can find $B\in\mathcal{B}$ and $X\in\mathcal{F}$ such that $X\cap B\subseteq V$. It is clear that $\mathcal{F}\upharpoonright(X\cap B)$ is a $p$-filter. To prove that it is also a saturated filter, by Lemma \ref{saturation_of_nwd_tau}, it is enough to prove that $\mathsf{nwd}(\tau(\mathcal{F}))\upharpoonright(X\cap B)^*\subseteq \mathcal{F}\upharpoonright(X\cap B)$. Note that a base for the topology $\tau(\mathcal{F})\upharpoonright(X\cap B)$ is given by $\mathcal{B}_\mathcal{F}'=\{W\in\mathcal{B}_\mathcal{F}:W\subseteq X\cap B\}$. For simplicity, let us write $\tilde{\tau}$ instead of $\tau(\mathcal{F})\upharpoonright(X\cap B)$. Note that since $X\cap B$ is $\tau(\mathcal{F})$-open, we have that $N\subseteq X\cap B$ is $\tilde{\tau}$-nowhere dense if and only if it is $\tau(\mathcal{F})$-nowhere dense, so $\mathsf{nwd}(\tilde{\tau})=\mathsf{nwd}(\tau(\mathcal{F}))\upharpoonright(X\cap B)$.

Pick $N\in\mathsf{nwd}(\tilde{\tau})$, we want to show that $(X\cap B)\setminus N\in\mathcal{F}\upharpoonright(X\cap B)$.
Since $N$ is nowhere dense, any open set $W\in\tau(\mathcal{F})\upharpoonright(X\cap V)$ can be extended to a basic open set having empty intersection with $N$. Thus, we can find a maximal antichain of basic open sets $\{B_n\cap X_n:n\in\omega\}\subseteq\mathcal{B}_\mathcal{F}'$, each one of whose elements has empty intersection with $N$. Note that by the definition of the basis $\mathcal{B}_\mathcal{F}$ and $\mathcal{B}_\mathcal{F}'$, we should have $B_n\cap B_m=\emptyset$ whenever $n\neq m$. Note also that $\bigcup_{n\in\omega}B_n$ is an open dense subset of $B$ (in the usual topology of the rationals). Since $\mathcal{F}$ is a $p$-filter, there is $X_\omega$ which is almost contained in each one of the sets $X_n$. Then $F_n=X_\omega\cap B_n\setminus X_n\cap B_n$ is a finite subset of $B_n$; this implies that $\bigcup_{n\in\omega}F_n$ is a discrete set in $\bigcup_{n\in\omega}B_n$, so $Z=X_\omega\setminus \bigcup_{n\in\omega}F_n\in\mathcal{F}$. Then we have that $Z\cap B_n$ is a basic open set and is contained in $X_n\cap B_n$, so it has empty intersection with $N$. On the other hand, $W=\bigcup_{n\in\omega}Z\cap B_n$ is an open $\tilde{\tau}$-dense set, and it is an element of the filter $\mathcal{F}\upharpoonright(X\cap B)$: note that $U=(\omega\setminus \overline{B})\cup\bigcup_{n\in\omega}B_n$ is an open dense set of $\mathbb{Q}$, so we have $U\in\mathcal{F}$, which implies $Z\cap U\in\mathcal{F}$, and it is easy to see that $B\cap Z\cap U=W$. Thus, we conclude that $W\in\mathcal{F}\upharpoonright(X\cap B)$, which in turn implies $\omega\setminus N\in\mathcal{F}\upharpoonright(X\cap B)$ (because $W$ and $N$ are disjoint). We have proved that $\mathsf{nwd}(\tilde{\tau})^*\subseteq\mathcal{F}\upharpoonright(X\cap B)$. 
\end{proof}

\begin{crl}
The existence of a rational $p$-filter implies the existence of a saturated $p$-filter.    
\end{crl}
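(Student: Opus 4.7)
The corollary is essentially a bookkeeping consequence of the proposition immediately preceding it, so the plan is to spell out exactly how the traitored structure transfers to a filter on $\omega$. First I would take an arbitrary rational $p$-filter $\mathcal{F}$ and invoke the previous proposition to obtain $B\in\mathcal{B}$ and $X\in\mathcal{F}$ such that $\mathcal{F}\upharpoonright(X\cap B)$ is a saturated $p$-filter on the underlying set $X\cap B$. Since $X$ is dense in $\mathbb{Q}$ and $B$ is a non-empty open subset of $\mathbb{Q}$, the intersection $X\cap B$ is countably infinite.

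Next I would fix any bijection $\phi:X\cap B\to\omega$ and let $\mathcal{G}=\phi(\mathcal{F}\upharpoonright(X\cap B))=\{\phi[Y]:Y\in\mathcal{F}\upharpoonright(X\cap B)\}$. Because the defining properties of being a filter, of being a $p$-filter, and of the quotient algebra being c.c.c.\ (i.e.\ saturation of the dual ideal) are all invariant under bijections between the underlying sets, $\mathcal{G}$ is a saturated $p$-filter on $\omega$. I would remark explicitly that saturation transfers because $\mathcal{P}(X\cap B)/(\mathcal{G}^*\upharpoonright(X\cap B))^*$ is isomorphic as a boolean algebra to $\mathcal{P}(\omega)/\mathcal{G}^*$ via the bijection $\phi$, and the $p$-filter property (the existence of pseudointersections modulo finite) is likewise a purely combinatorial statement preserved by bijections.

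There is essentially no obstacle here, since the genuine content lies entirely in the preceding proposition: the only thing to verify is that the restriction $\mathcal{F}\upharpoonright(X\cap B)$ is a \emph{proper} filter on $X\cap B$ (so that transporting it yields a proper filter on $\omega$), which follows from $X\cap B\in\mathcal{F}$ and the fact that every member of $\mathcal{F}$ meets every element of the base $\mathcal{B}_\mathcal{F}$ densely, so no member of $\mathcal{F}\upharpoonright(X\cap B)$ is empty. This completes the reduction.
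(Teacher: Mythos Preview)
Your argument is correct and matches the paper's intended reading: the corollary is stated there without proof as an immediate consequence of the preceding proposition, and you have simply spelled out the transport of the restricted filter along a bijection $X\cap B\to\omega$. One minor slip worth flagging: you assert $X\cap B\in\mathcal{F}$, but $B$ is merely a basic open set and need not belong to $\mathcal{F}$; however, this does not affect the argument, since properness of $\mathcal{F}\upharpoonright(X\cap B)$ is already part of what the proposition delivers (and in any case follows because every member of $\mathcal{F}$ is dense in $\mathbb{Q}$ and hence meets $B$ in an infinite set).
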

\begin{crl}
It is relatively consistent that there are no rational $p$-filters.    
\end{crl}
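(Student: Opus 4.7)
The plan is essentially immediate by combining the previous corollary with an external consistency result. First I would appeal to the preceding Corollary, which asserts that the existence of a rational $p$-filter implies the existence of a saturated $p$-filter. Taking the contrapositive, any model in which there are no saturated $p$-filters is automatically a model in which there are no rational $p$-filters.

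Next I would invoke the main theorem of \cite{no_p_saturated_filters}, which establishes the relative consistency with $\mathsf{ZFC}$ of the statement ``there is no saturated $p$-filter on $\omega$''. Since \cite{no_p_saturated_filters} provides such a model $W$, we get that in $W$ no saturated $p$-filter exists, and hence, by the previous paragraph, no rational $p$-filter exists in $W$ either.

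The only step requiring any intellectual content was the previous proposition, whose proof has already been given; everything else is a one-line deduction. In particular, no new forcing argument is needed here, and there is no serious obstacle: the work of isolating the correct ``localization'' $\mathcal{F}\upharpoonright(X\cap B)$ as a saturated $p$-filter was what transferred the difficulty of destroying rational $p$-filters to the previously-solved problem of destroying saturated $p$-filters. Thus the proof of the corollary reduces to citing these two facts and noting that the consistency of the non-existence of saturated $p$-filters pulls back along the implication to yield the consistency of the non-existence of rational $p$-filters.
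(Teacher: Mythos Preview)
Your proposal is correct and follows exactly the same approach as the paper: invoke the preceding corollary (rational $p$-filter $\Rightarrow$ saturated $p$-filter) in contrapositive form, then cite the model from \cite{no_p_saturated_filters} where no saturated $p$-filters exist. The paper's proof is just a more compressed version of what you wrote.
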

\begin{proof}
In the model constructed in \cite{no_p_saturated_filters} there are no saturated $p$-filters, so the previous corollary implies that in such model there is no rational $p$-filter as well.    
\end{proof}

Finally, note that since in the Laver model there is no $q$-point, there are also neither rational $q$-filters: if it were the case that there is a rational $q$-filter, just extend it to an ultrafilter on $\mathbb{Q}$, which results in a $q$-point.

\begin{thm}
In the Laver model there are no rational $q$-filters.    
\end{thm}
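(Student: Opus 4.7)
The plan is to argue by contradiction, following the hint supplied by the author in the paragraph immediately before the theorem statement. Suppose that in the Laver model there exists a rational $q$-filter $\mathcal{F}\subseteq Dense(\mathbb{Q})$. Using the fixed identification of $\mathbb{Q}$ with $\omega$ (as in the convention stated right after Definition \ref{q_filter_def}), we may regard $\mathcal{F}$ as a filter on $\omega$ whose every element is dense in $\mathbb{Q}$.

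Next, I would extend $\mathcal{F}$ via the Axiom of Choice to an ultrafilter $\mathcal{U}$ on $\omega$ with $\mathcal{F}\subseteq\mathcal{U}$. The key step is to verify that $\mathcal{U}$ is a $q$-point. Given any partition $\langle I_n:n\in\omega\rangle$ of $\omega$ into finite pieces, the hypothesis that $\mathcal{F}$ is a $q$-filter yields a partial selector $S\in\mathcal{F}$ for the partition; then $S\in\mathcal{U}$ as well, which is exactly the $q$-point property for $\mathcal{U}$.

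Finally, it is a classical result (due to Miller) that the Laver model contains no $q$-points, so the existence of $\mathcal{U}$ is a contradiction. Hence no rational $q$-filter can exist in the Laver model. I do not expect any real obstacle here: the entire argument rests on the elementary observation that the $q$-filter property is preserved upward under filter extension, together with the external input about the Laver model. The only point requiring a moment of care is that the notion of $q$-filter appearing in Definition \ref{q_filter_def} is literally the standard one when $\mathcal{F}$ is viewed as a filter on $\omega$, so that the inherited property on $\mathcal{U}$ is precisely the one ruled out in the Laver extension.
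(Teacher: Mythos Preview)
Your proposal is correct and follows essentially the same argument as the paper: extend a hypothetical rational $q$-filter to an ultrafilter, observe that the resulting ultrafilter is a $q$-point, and invoke the classical fact that the Laver model has no $q$-points. The paper's proof is in fact just the one-sentence remark preceding the theorem, and your write-up merely spells out the details.
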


\bibliographystyle{alpha}
\bibliography{bibliography}
\end{document}